\newtheorem{thm}{Theorem}[section]
\newtheorem{prop}[thm]{Proposition}
\newtheorem{lem}[thm]{Lemma}
\newtheorem{cor}[thm]{Corollary}
\theoremstyle{definition}
\newtheorem{rem}[thm]{Remark}
\newtheorem*{rem*}{Remark}
\newtheorem*{nota}{Notations}
\newcommand{\mm}{\mathbf{m}}
\newcommand{\nn}{\mathbf{n}}
\newcommand{\yy}{\mathbf{y}}
\newcommand{\DD}{\mathcal {D}}
\newcommand{\Zz}{\mathbb {Z}}
\newcommand{\Cc}{\mathbb {C}}
\newcommand{\im}{\mathrm{Im\,}}
\newcommand{\coker}{\mathrm{coker\,}}
\def\w{\widetilde}
\def\xr{\xrightarrow}
\numberwithin{equation}{section}
\begin{document}
	\title[The cohomology of $BPU(4)$]{The cohomology of the classifying space of $PU(4)$}
	\author[F.~Fan]{Feifei Fan}
	\thanks{The author is supported by the National Natural Science Foundation of China (Grant no. 12271183) and by the GuangDong Basic and Applied Basic Research Foundation (Grant no. 2023A1515012217).}
	\address{Feifei Fan, School of Mathematical Sciences, South China Normal University, Guangzhou, 510631, China.}
	\email{fanfeifei@mail.nankai.edu.cn}
	\subjclass[2020]{55R35, 55R40, 55S10, 55T10}
	\maketitle
	\begin{abstract}
Let $BPU(n)$ be the classifying space of the projective unitary group $PU(n)$. We determine the integral cohomology ring of $BPU(4)$, and the Steenrod algebra structure of its mod $2$ cohomology.
	\end{abstract}

	%\tableofcontents
	\section{Introduction}\label{sec:introduction}
	Let $G$ be a topological group. 
	The classifying space $BG$ of  $G$ is characterized as
	the base space of a universal $G$-bundle $G\to EG\to BG$.
	The cohomology of $BG$ is the main tool in the theory of classifying principal $G$-bundles.
	
	Let $U(n)$ be the group of $n\times n$ unitary matrices. The projective unitary group $PU(n)$ is the quotient group $U(n)/S^1$ of $U(n)$ by its center subgroup $S^1=\{e^{i\theta}I_n:\theta\in [0, 2\pi]\}$, where $I_n$
	denotes the identity matrix. $PU(n)$ is homotopy equivalent to $PGL(n,\Cc)$, the projective general linear group $GL(n,\Cc)/\Cc^*$, which is the
	automorphism group of the algebra of $n\times n$ complex matrices. Therefore, for a pointed space $X$, the set of pointed homotopy classes of
	maps $[X,BPU(n)]$ also
	classifies bundles of $n\times n$ complex matrix algebras over $X$, known as the topological Azumaya algebras of degree $n$ over $X$. (See \cite{Gro68}.)
	
	The cohomology ring of $BPU(n)$ also plays a significant role in the study of the topological period-index
	problem, which was introduced by Antieau and Williams \cite{AW14a,AW14b} as an analogue of period-index problems in
	algebraic geometry (cf. \cite{Art82,Jon04,Lieb08}). Gu \cite{Gu19,Gu20}, Crowley and Grant \cite{CG20} also investigated this problem for certain topological spaces.
	
	Since $PU(n)$ can also be viewed as the quotient group of the special unitary group $SU(n)$ by its center subgroup $\Zz/n$ generated by $e^{2\pi i/n}I_n$, there is an induced fibration of classifying spaces:
	\[B(\Zz/n)\to BSU(n)\to BPU(n).\]
	Hence, for a commutative ring $R$, if $1/n\in R$, then 
	\[H^*(BPU(n);R)\cong H^*(BSU(n);R)\cong R[c_2,c_3\dots,c_n],\ \ \deg(c_i)=2i.\] 
	%since $H^*(B(\Zz/n);R)=0$ in this case. 
	It follows that if $x\in H^*(BPU(n);\Zz)$ is a torsion element, then there exists
	$k\geq 0$ such that $n^kx=0$. (In the case of Chow rings, Vezzosi \cite{Vezz00} proved the stronger result that
	all torsion classes in the Chow ring of $BPGL(n,\Cc)$ are $n$-torsion.)
	
	However, if $n$ is not invertible in the coefficient ring $R$, the ring structure of $H^*(BPU(n);R)$ is very complicated for general $n$.
	Here we list some of the known results on the cohomology of $BPU(n)$ for special values of $n$. 
	
	Since $PU(2)=SO(3)$, the ring $H^*(BPU(2);\Zz)$ is well understood as a special case of the result of Brown \cite{Brow82}.
	
	The ring $H^*(BPU(3);\Zz/3)$ is computed in Kono-Mimura-Shimada \cite{KMS75}. The Steenrod algebra structure of $H^*(BPU(3);\Zz/3)$ and the Brown-Peterson cohomology of $BPU(3)$ are determined in Kono-Yagita \cite{KY93}.
	
	The ring $H^*(BPU(n);\Zz/2)$ is known  if $n\equiv 2$ mod $4$ (Kono-Mimura \cite{KM75} and Toda \cite{Tod87}). In \cite{Tod87}, Toda also computed  the ring $H^*(BPU(4);\Zz/2)$.
	
	In \cite{Vis07}, Vistoli provided a description of  $H^*(BPU(p);\Zz)$, as well as the Chow ring of $BPGL(p,\Cc)$, for any odd prime $p$. In particular, the ring structure of $H^*(BPU(3);\Zz)$ is given in terms of generators and relations there. We will explicitly explain Vistoli's result in the next Section. 
	
	More interesting results on the ordinary cohomology ring $H^*(BPU(p);\Zz/p)$ (resp. on the Brown-Peterson cohomology of $BPU(p)$) for an arbitrary prime $p$ can be found in Vavpeti\v{c}-Viruel \cite{VV05} (resp. in Masaki-Yagita \cite{KY08}).

	For an arbitrary integer $n>3$, the cohomology of $BPU(n)$ is only known in certain finite range of dimensions. For instance, Gu \cite{Gu21} determined the ring structure of $H^*(BPU(n);\Zz)$ in dimensions less
	than or equal to $10$. Other partial results on the $p$-local cohomology groups of $BPU(n)$ for an odd prime $p$, $p\mid n$,
	can be found in Gu-Zhang-Zhang-Zhong \cite{GZZZ22} and Zhang-Zhang-Zhong \cite{ZZZ23}.
	
	Summarizing these results, the only examples of $BPU(n)$, whose integral cohomology rings can be described via generators and relations, are the cases $n=2,3$. Also, these are the only cases for which the Steenrod algebra structure of $H^*(BPU(n);\Zz/p)$, $p$ a prime such that $p\mid n$, are known. The main goal of this paper is to determine the integral cohomology ring of $BPU(4)$ and the Steenrod algebra structure of its mod $2$ cohomology.
	
\section{Main results}\label{sec:results}
\begin{nota}
	For an integer $n>0$, let $\Lambda_n$ be the ring of symmetric polynomials over $\Zz$ in $n$ variables. It is known that $\Lambda_n=\Zz[\sigma_1,\dots,\sigma_n]$, where $\sigma_i$ is the $i$th elementary symmetric polynomial in $n$ variables. Let $\Lambda_n^d\subset \Lambda_n$ be the $\Zz$-submodule of symmetric polynomials of degree $d$.
	
	Let $\nabla=\sum_{i=1}^n\partial/\partial v_i$ be the linear differential operator acting on the polynomial algebra $\Zz[v_1,\dots,v_n]$. 
	It is easy to see that $\nabla$ on $\Zz[v_1,\dots,v_n]$ preserves symmetric polynomials, so $\nabla$ can be restricted to $\Lambda_n$. 
	Let $K_n$ be the kernel of $\nabla$ on $\Lambda_n$. Generally, for a commutative ring $R$ with unit, we use the notation $\nabla_R$ to denote the map $\nabla\otimes R$. 
	%$\nabla^i$ (or $\nabla^i_R$) means the restriction of $\nabla$ (or $\nabla_R$) to polynomials of degree $i$. 
\end{nota}	

For the  graded polynomial ring $\Zz[v_1,\dots,v_n]$ with $\deg(v_i)=2$, define a graded algebraic homomorphism $\Theta_n:\Zz[v_1,\dots,v_n]\to\Zz[\eta]/(n\eta)$, $\deg(\eta)=2$, by $\Theta(v_i)=i\eta$. In fact, this is the ring homomorphism $H^*(BT^n;\Zz)\to H^*(B(\Zz/n);\Zz)$ induced by the embedding $\Zz/n\hookrightarrow T^n$, $\omega\mapsto (\omega,\omega^2,\dots,\omega^{n-1},1)$, $\omega=e^{2\pi i/n}$, $T^n=(S^1)^n$.
Let $\Theta'_n$ be the restriction of $\Theta_n$ to $K_n$, and let $K'_n=\ker\Theta'_n$. The result of Vistoli \cite{Vis07} can be described as follows.

\begin{thm}[Vistoli {\cite[Theorem 3.4]{Vis07}}]\label{thm:vistoli}
	For any prime $p>2$, the integral cohomology ring of $BPU(p)$ is given by
	\[ H^*(BPU(p);\Zz)\cong\frac{K_p\otimes \Zz[x_3,x_{2p+2}]}{(x_3^2,\,px_3,\,px_{2p+2},\,x_3K_p',\,x_{2p+2}K'_p)},\ \deg(x_i)=i.\]
\end{thm}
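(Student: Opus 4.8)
The plan is to compute the free and the torsion parts of $H^*(BPU(p);\Zz)$ by separate tools and then to glue them together by restricting to small subgroups. For the \emph{free part} I would use the maximal torus $T^{p-1}=T^p/\Delta S^1$ of $PU(p)$, whose Weyl group is $W=S_p$ acting by permutation. Restriction embeds the free quotient of $H^*(BPU(p);\Zz)$ into $H^*(BT^{p-1};\Zz)^{S_p}$ and is a rational isomorphism. Writing $H^*(BT^p;\Zz)=\Zz[v_1,\dots,v_p]$, the subring $H^*(BT^{p-1};\Zz)$ is the algebra generated by the differences $v_i-v_j$, that is, the polynomials fixed by the simultaneous translation $v_i\mapsto v_i+t$; a symmetric polynomial is translation-invariant precisely when $\tfrac{d}{dt}\big|_{t=0}$ of the translate, namely $\nabla$, annihilates it. Hence $H^*(BT^{p-1};\Zz)^{S_p}=K_p$, and after checking degreewise that the restriction is onto this lattice I would identify the free quotient with $K_p$, accounting for the tensor factor in the statement.

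For the \emph{torsion}, which by the discussion preceding the theorem is $p$-primary, I would analyse the central extension $1\to\Zz/p\to SU(p)\to PU(p)\to 1$ through the Serre spectral sequence of $B\Zz/p\to BSU(p)\xrightarrow{\pi}BPU(p)$, with fibre $H^*(B\Zz/p;\Zz)=\Zz[u]/(pu)$, $|u|=2$, and torsion-free total space $\Zz[c_2,\dots,c_p]$. Since $BSU(p)$ is $3$-connected, $u$ must transgress, and $x_3:=\tau(u)\in H^3(BPU(p);\Zz)$ is the order-$p$ class obtained by pulling back $\beta\iota$ along the classifying map $\chi\colon BPU(p)\to K(\Zz/p,2)$ of the extension. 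The Leibniz rule disposes of $u,\dots,u^{p-1}$ against the $u^{k-1}x_3$, and since $px_3=0$ the class $u^p$ survives; working mod $p$ and using $\PP^1u=u^p$ together with Kudo's transgression theorem, $u^p$ transgresses to $\PP^1\bar x_3$ in degree $2p+1$, whose integral Bockstein is the second order-$p$ generator $x_{2p+2}$ (equivalently $\chi^*(\beta\PP^1\beta\iota)$). This produces the generators and the relations $px_3=px_{2p+2}=0$; the relation $x_3^2=0$ is automatic because $x_3$ has odd degree and odd order.

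The genuinely hard relations are $x_3K_p'=x_{2p+2}K_p'=0$, and the above spectral sequence does not see them: it only places such products in $\ker\pi^*$, which is automatic for torsion classes. Here I would argue by detection. A torsion class restricts to $0$ on the maximal torus (the target $H^*(BT^{p-1};\Zz)$ being torsion-free), so the torsion ideal must be detected on the non-toral subgroup $(\Zz/p)^2\subset PU(p)$ generated by the images of the clock matrix $Q=\mathrm{diag}(1,\omega,\dots,\omega^{p-1})$ and the cyclic shift $P$. On $H^*(B(\Zz/p)^2;\Zz/p)=\Zz/p[y_1,y_2]\otimes\Lambda(a_1,a_2)$ (exterior on $a_1,a_2$) one has $\bar x_3\mapsto a_1y_2-a_2y_1$, while the restriction of a free class $f\in K_p$ lands mod $p$ in $\Zz/p[y_1,y_2]$ and, because the eigenvalues of both $Q$ and $P$ are exactly $\{\omega^i\}$, is governed in each variable by $\Theta_p$; thus $\mathrm{res}_{(\Zz/p)^2}(f)=0$ if and only if $f\in K_p'$. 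As $g(y_1,y_2)\cdot(a_1y_2-a_2y_1)=0$ forces $g=0$ in that ring, I obtain $x_3f=0\iff f\in K_p'$, and an analogous computation with the degree-$(2p+2)$ restriction of $x_{2p+2}$ gives $x_{2p+2}K_p'=0$. Establishing that this restriction is injective on the torsion ideal, and that the kernel of the restriction of $K_p$ is exactly $K_p'$, is the technical heart and the step I expect to be the main obstacle.

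Finally I would assemble the pieces: the free quotient $K_p$, the exterior class $x_3$, and the polynomial class $x_{2p+2}$, modulo the relations found, define a homomorphism from the displayed ring to $H^*(BPU(p);\Zz)$. I would prove it an isomorphism by comparing, in each degree, the free rank and the $p$-torsion rank of both sides against the $E_\infty$-page of the Serre spectral sequence, and by checking that the remaining multiplicative and additive extension problems are resolved so that the associated graded reassembles into the stated presentation.
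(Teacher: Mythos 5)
This theorem is not proved in the paper at all: it is quoted verbatim from Vistoli \cite[Theorem 3.4]{Vis07}, so the only fair comparison is with Vistoli's proof, which proceeds by entirely different means (equivariant intersection theory and a stratification argument, together with detection on the normalizer of the maximal torus and on the nontoral $p$-subgroup). Your outline does reproduce the correct skeleton of the known picture --- free quotient $=K_p=H^*(BT^{p-1};\Zz)^{S_p}=\ker\nabla\cap\Lambda_p$, torsion all of exponent $p$ generated over $K_p/K_p'$ by $x_3$ and a degree-$(2p+2)$ class, relations seen on the $(\Zz/p)^2$ generated by the clock and shift matrices --- but at each of the three pivotal points it asserts rather than proves. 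First, surjectivity of the restriction onto $K_p$ ("after checking degreewise") is a genuine theorem, equivalent to every class of $K_p$ lifting to $H^*(BPU(p);\Zz)$, i.e.\ surviving the relevant spectral sequence; note that the present paper has to devote Proposition \ref{prop:K_4} to exactly this for $n=4$, including killing the groups ${}^U\!E_4^{9,4}$, ${}^U\!E_4^{11,2}$ that could receive differentials, and Vistoli needs his Chow-theoretic machinery for it. Second, your relations $x_3K_p'=x_{2p+2}K_p'=0$ require \emph{injectivity} of restriction on the torsion ideal, which you correctly flag as the main obstacle but do not supply; Quillen's theorem only detects modulo nilpotents, and faithfulness on torsion is special to $PU(p)$. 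Moreover, since the clock-shift $(\Zz/p)^2$ is nontoral and does not lift to $U(p)$, the restriction of lifted $K_p$-classes is not literally "governed in each variable by $\Theta_p$"; one must pass through the extraspecial cover of order $p^3$ and compute Chern classes of its $p$-dimensional irreducible representation, which is where the identification $\ker(\mathrm{res}|_{K_p})=K_p'$ actually comes from.

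Third, your construction of $x_{2p+2}$ conflates the mod $p$ and integral spectral sequences of $B\Zz/p\to BSU(p)\to BPU(p)$: in the \emph{integral} sequence every torsion class of the base must die (the abutment $\Zz[c_2,\dots,c_p]$ is torsion-free), so the degree-$(2p+2)$ class is a target of a $d_{2p-1}$-type differential from $x_3u^{p-1}$, not a surviving transgression of $u^p$. Defining $x_{2p+2}=\beta\PP^1\rho(x_3)$ is legitimate, but its nonvanishing --- and, since the statement asserts $\Zz/p[x_{2p+2}]$ polynomial, the nonvanishing of all products $x_{2p+2}^k$ and $x_3x_{2p+2}^k$ --- again needs the detection argument you have deferred (compare Proposition \ref{prop:nonzero} here, which at $p=2$ gets this by restricting to $BPU(2)$, an option unavailable for odd $p$). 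Finally, the concluding "compare ranks against the $E_\infty$-page" is circular as stated: computing that $E_\infty$-page for your backwards fibration (unknown base, known total space) is essentially equivalent to the theorem, which is precisely why Gu's fibration $BU(p)\to BPU(p)\to K(\Zz,3)$, with \emph{known} fiber and base, or Vistoli's stratification, is used in the literature instead. So: right architecture, correct small verifications (e.g.\ $x_3^2=0$ from odd degree plus $px_3=0$), but the three load-bearing steps are exactly the ones left open.
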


For the element $\delta=\prod_{i\neq j}(v_i-v_j)\in K_p$, one easily checks that 
\[\Theta'_p(\delta)\equiv -\eta^{p^2-p}\neq 0\mod (p\eta).\]
It is shown in \cite[Proposition 3.1]{Vis07} that the image of $\Theta'_p$ is generated by $\Theta'_p(\delta)$, so for $p>2$, as $\Zz$-module, 
\[H^*(BPU(p);\Zz)\cong K_p\oplus \big((E_{\Zz/p}[x_3]\otimes\Zz/p[x_{2p+2}])^+\otimes\Zz[\delta]\big),\]
where $E_{\Zz/p}[x_3]$ means the exterior algebra over $\Zz/p$ with the generator $x_3$. Since the rank of $K_p$ is the same as the rank of $H^*(BSU(n);\Zz)\cong\Zz[c_2,\dots,c_n]$, the additive structure of $H^*(BPU(p);\Zz)$ are determined completely.

One reason for the difficulty of describing $H^*(BPU(n);\Zz)$  is that the ring  $K_n$ is hard to compute for large $n$. For small values of $n$, here are some results.
$K_2\cong \Zz[\alpha_2]$, $\alpha_2=\sigma_1^2-4\sigma_2\in\Lambda_2^2$, is obvious. Vezzosi \cite[Lemma 3.2]{Vezz00} (see also \cite[Theorem 14.2]{Vis07}) proved that \[K_3\cong \Zz[\alpha_2,\alpha_3,\alpha_{6}]/(27\alpha_{6}-4\alpha_2^3-\alpha_3^2),\ \ \alpha_i\in \Lambda_3^i.\]
Here we give the description of $K_4$.
\begin{thm}\label{thm:K_4}
There is a ring isomorphism
\[K_4\cong\Zz[\alpha_2,\alpha_3,\alpha_{4},\alpha_{6}]/(64\alpha_6-97\alpha_2^3-27\alpha_3^2+144\alpha_2\alpha_4),\ \ \alpha_i\in\Lambda_4^i.\] 
\end{thm}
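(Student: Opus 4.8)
The plan is to pin down $K_4\otimes\Qq$ first, then exhibit explicit integral generators together with the stated relation, and finally prove that these generators suffice over $\Zz$ --- the last step being the heart of the matter. Throughout I use that $\nabla$ is a derivation of degree $-1$ satisfying $\nabla\sigma_k=(5-k)\sigma_{k-1}$, i.e.\ $\nabla\sigma_1=4$, $\nabla\sigma_2=3\sigma_1$, $\nabla\sigma_3=2\sigma_2$, $\nabla\sigma_4=\sigma_3$, and that $K_4=\ker\nabla$. Since $\nabla$ is the directional derivative along $(1,1,1,1)$, over $\Qq$ I would center the variables via $v_i\mapsto v_i-\sigma_1/4$; this identifies $K_4\otimes\Qq$ with the symmetric polynomials in the centered variables, namely the polynomial ring $\Qq[\alpha_2,\alpha_3,\alpha_4]$ on the centered elementary symmetric functions in degrees $2,3,4$. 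This yields the Poincaré series $\prod_{i=2}^{4}(1-t^{i})^{-1}$, hence the target ranks $\dim_\Qq(K_4\otimes\Qq)^d$ (in particular rank $1$ in degrees $2,3,4,5$ and rank $3$ in degree $6$), and I record the identity $K_4=\Lambda_4\cap(K_4\otimes\Qq)$ inside $\Lambda_4\otimes\Qq$, which certifies that a rational combination of elements of $K_4$ lying in $\Lambda_4$ already lies in $K_4$.

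Next I would produce the generators. Each of $K_4^2,K_4^3,K_4^4$ is a free $\Zz$-module of rank one, so let $\alpha_2,\alpha_3,\alpha_4$ be primitive generators; for instance $\alpha_2=8\sigma_2-3\sigma_1^2$, which one checks is primitive in the monomial basis. A finite computation in $\Lambda_4$ then verifies the divisibility $\alpha_2^3+27\alpha_3^2-48\alpha_2\alpha_4\equiv 0\pmod{64}$, and, since the quotient lies both in $K_4\otimes\Qq$ and in $\Lambda_4$, it lies in $K_4$; I set $\alpha_6:=\tfrac1{64}\bigl(\alpha_2^3+27\alpha_3^2-48\alpha_2\alpha_4\bigr)\in K_4^6$. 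These assignments define a graded ring homomorphism $\phi\colon R\to K_4$, where $R=\Zz[\alpha_2,\alpha_3,\alpha_4,\alpha_6]/(64\alpha_6-\alpha_2^3-27\alpha_3^2+48\alpha_2\alpha_4)$.

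For injectivity and a reduction to the prime $2$, I note that the relator is primitive and linear in $\alpha_6$, hence irreducible, hence prime, in the UFD $\Zz[\alpha_2,\alpha_3,\alpha_4,\alpha_6]$, so $R$ is a domain. By construction $\phi\otimes\Qq$ carries $R\otimes\Qq$ isomorphically onto $\Qq[\alpha_2,\alpha_3,\alpha_4]=K_4\otimes\Qq$, so $\phi$ is injective and $C:=\coker\phi$ is torsion and finitely generated in each degree. Inverting $2$, the centering of the first paragraph is already defined over $\Zz[1/2]$ and the $\alpha_i$ are unit multiples there of the centered elementary symmetric functions, so $K_4[1/2]=\Zz[1/2][\alpha_2,\alpha_3,\alpha_4]=R[1/2]$. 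Hence $C$ is $2$-primary, and as $C$ is finitely generated in each degree, $C=0$ if and only if $C\otimes\Zz/2=0$, i.e.\ if and only if $\phi\otimes\Zz/2\colon R\otimes\Zz/2\to K_4\otimes\Zz/2$ is surjective.

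\textbf{The main obstacle} is precisely this mod-$2$ surjectivity, and the subtlety is that $K_4\otimes\Zz/2$ is \emph{not} simply $\ker\bar\nabla$ on $\Lambda_4\otimes\Zz/2$: from the exact sequence $0\to K_4\to\Lambda_4\xrightarrow{\nabla}\im\nabla\to 0$ one gets $K_4\otimes\Zz/2\hookrightarrow\ker\bar\nabla$ with quotient the $2$-torsion $(\coker\nabla)[2]$, so $\coker\nabla$ must be controlled at the same time (for example $\sigma_1\in\ker\bar\nabla$ but $\sigma_1\notin K_4\otimes\Zz/2$, reflecting $\nabla\sigma_1=4$). Concretely I would compute $\ker\bar\nabla$ for the explicit mod-$2$ derivation $\bar\nabla=\sigma_1\,\partial/\partial\sigma_2+\sigma_3\,\partial/\partial\sigma_4$ on $\Zz/2[\sigma_1,\sigma_2,\sigma_3,\sigma_4]$, whose ring of constants is generated by $\sigma_1,\sigma_3,\sigma_2^2,\sigma_4^2$ and $\sigma_1\sigma_4+\sigma_2\sigma_3$; then determine $(\coker\nabla)[2]$ in the relevant range so as to pin down $K_4\otimes\Zz/2$ exactly, and compare with the reductions $\bar\alpha_2,\bar\alpha_3,\bar\alpha_4,\bar\alpha_6$ to see that they generate it. Since $C$ is $2$-primary and finitely generated in each degree, $C=2C$ then forces $C=0$, so $\phi$ is an isomorphism; combined with the injectivity already established, this shows the displayed relation is the only one, completing the proof.
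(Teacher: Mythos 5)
Your proposal follows the same architecture as the paper's proof in Section~\ref{sec:proof K_4} (explicit low-degree elements of $K_4$, the centering $v_i\mapsto v_i-\sigma_1/4$ to control $K_4\otimes\Zz[\tfrac12]$, reduction of surjectivity to a $2$-primary statement, and a primitivity/UFD argument for the relation ideal, which you execute correctly), but there is a genuine gap at the step you treat as routine, \emph{before} your declared main obstacle. Your claim that $K_4^d$ has rank one for $d=2,3,4,5$ contradicts your own Poincar\'e series: in degree $4$ the rank is \emph{two}, spanned rationally by $\alpha_2^2$ and $\alpha_4$, so ``let $\alpha_4$ be a primitive generator'' is ill-posed; what is needed is a choice of $\alpha_4$ making $\{\alpha_2^2,\alpha_4\}$ a $\Zz$-basis of $K_4^4$, and everything downstream is sensitive to this choice. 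Concretely, for the choice $\alpha_4=12\sigma_4-3\sigma_1\sigma_3+\sigma_2^2$ of \eqref{eq:alpha}, the centered form is $\alpha_4=12\sigma_4'+\sigma_2'^2$ with $\sigma_i'=\sigma_i(v_j-\sigma_1/4)$, and $12$ is \emph{not} a unit in $\Zz[\tfrac12]$, so your assertion that the $\alpha_i$ are unit multiples of the centered elementary symmetric functions fails, and with it the identity $K_4[\tfrac12]=\Zz[\tfrac12][\alpha_2,\alpha_3,\alpha_4]$: one only obtains $\Zz[\tfrac12][\sigma_2',\sigma_3',3\sigma_4']$. Indeed $\beta:=\tfrac13(\alpha_2^2-64\alpha_4)=-256\sigma_4'$ satisfies $\nabla\beta=0$, hence lies in $K_4^4$, but its expression requires $\tfrac13$, so $\beta\notin\Zz[\tfrac12][\alpha_2,\alpha_3,\alpha_4]$. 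With this choice your cokernel $C$ therefore has $3$-torsion and is not $2$-primary, and the entire reduction to mod-$2$ surjectivity collapses; conversely, choices fixing the degree-$4$ lattice disturb your claimed congruence $\alpha_2^3+27\alpha_3^2-48\alpha_2\alpha_4\equiv0\pmod{64}$, which is itself choice-sensitive (already $\alpha_4\mapsto-\alpha_4$ destroys it, since $96\alpha_2\alpha_4\notin64\Lambda_4$). Your write-up never exhibits generators for which the $\Zz[\tfrac12]$-statement and the $64$-divisibility hold \emph{simultaneously}, and that is where the real content lies; note you cannot simply defer to the paper here, since the paper's own $\Zz[\tfrac12]$ step makes the same assertion for the same $\alpha_4$ without addressing the factor $3$.

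Separately, the step you rightly single out as decisive --- surjectivity of $\phi\otimes\Zz/2$ --- is only announced, not proved, and your proposed route has two specific defects. First, ``determine $(\coker\nabla)[2]$ in the relevant range'' is not meaningful: the generation statement must hold in all degrees, nothing bounds the degrees in which new ring generators of $K_4\otimes\Zz/2$ could appear, and the $2$-torsion of $\coker\nabla$ is nontrivial in infinitely many degrees (Proposition~\ref{prop:order} exhibits $\Zz/4[\alpha_4,\alpha_6]\subset\coker\nabla$ --- and the paper \emph{deduces} that from Theorem~\ref{thm:K_4}, not conversely), so no finite check suffices. Second, there is a cheaper closing move inside your own framework, which is essentially the first lemma of Section~\ref{sec:proof K_4}: prove that the ideal of relations among $\sigma_1^2$, $\sigma_1^3$, $\sigma_1\sigma_3+\sigma_2^2$, $\sigma_1^2\sigma_4+\sigma_3^2+\sigma_1\sigma_2\sigma_3$ in $\Zz/2[\sigma_1,\dots,\sigma_4]$ is exactly the principal ideal generated by the reduction of the integral relator (a primality/Krull-dimension argument), whence the subalgebra they generate has Hilbert series $1/\bigl((1-t^2)(1-t^3)(1-t^4)\bigr)$, equal degreewise to $\dim_{\Zz/2}(K_4\otimes\Zz/2)^d=\operatorname{rank}K_4^d$, forcing the inclusion to be an equality. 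Your computation of $\ker\bar\nabla$ is correct (it is the free $\Zz/2[\sigma_1,\sigma_3,\sigma_2^2,\sigma_4^2]$-module on $1$ and $\sigma_1\sigma_4+\sigma_2\sigma_3$), but as it stands the proposal stops exactly at the two points --- the integral choice of $\alpha_4$ governing the prime $3$, and the all-degrees mod-$2$ generation --- where a proof is actually required.
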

%For an integer $n>1$, let $\RR_n$ be the graded ring  constructed in Conjecture \ref{conj:cohomology} (the rightside of formula). 
For $n\geq 5$, the complexity of $K_n$ grows very quickly as $n$ grows. To the author's knowledge, there is no one computed $K_n$ for $n\geq 5$ by writing down the generators and relations explicitly so far.

As in the case of $K_p$ for $p$ a prime, $K_4$ can be viewed as the quotient ring of $H^*(BPU(4);\Zz)$ by torsion elements.
\begin{thm}\label{thm:BPU4}
The cohomology ring $H^*(BPU(4);\Zz)$ is isomorphic to the graded ring
\[K_4\otimes\Zz[x_3,x_{10},x_{15}]/I,\ \ \deg(\alpha_i)=2i,\ \deg(x_i)=i,\]
where $I$ is the ideal generated by
\begin{gather*}
	4x_3,\ 2x_3^2,\ \alpha_2x_3,\ \alpha_3x_3,\ 
	2x_{10},\ \alpha_2x_{10},\ \alpha_3x_{10},\ 2x_{15},\ \alpha_2x_{15},\ \alpha_3x_{15},\\
	x_3^6\alpha_6+x_3^4x_{10}\alpha_4+x_3^{5}x_{15}+x_{10}^3+x_{15}^2.	
\end{gather*}
\end{thm}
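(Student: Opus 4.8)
Since $4=2^2$ is a prime power, the introduction's remark that every torsion class of $H^*(BPU(n);\Zz)$ is $n$-power torsion shows that all torsion here is $2$-primary; the plan is therefore to separate the $2$-primary torsion from the torsion-free part and reassemble the ring. First I would localize away from $2$: for each odd prime $\ell$ the fiber $B(\Zz/4)$ of $B(\Zz/4)\to BSU(4)\to BPU(4)$ has trivial reduced $\ell$-local cohomology, so $H^*(BPU(4);\Zz_{(\ell)})\cong\Zz_{(\ell)}[c_2,c_3,c_4]$. To pin down the torsion-free quotient integrally I would use the restriction $\rho\colon H^*(BPU(4);\Zz)\to H^*(BT^3;\Zz)$ to the maximal torus $T^3=T^4/S^1$ of $PU(4)$: rationally $\rho$ is injective with image the Weyl invariants, so $\ker\rho$ is exactly the torsion, and integrally its image is identified with the subring $\ker\nabla=K_4$ of symmetric functions that descend along the diagonal $S^1$. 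By Theorem~\ref{thm:K_4} this yields $H^*(BPU(4);\Zz)/\mathrm{torsion}\cong K_4$, the free tensor factor (note $K_4\otimes\Qq=\Qq[\alpha_2,\alpha_3,\alpha_4]$, matching $\Qq[c_2,c_3,c_4]$).

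For the torsion I would run the Serre spectral sequence of the fibration $BU(4)\to BPU(4)\to K(\Zz,3)$ arising from the central extension $S^1\to U(4)\to PU(4)$, with $E_2=H^*(K(\Zz,3);\Zz)\otimes\Zz[c_1,c_2,c_3,c_4]$. The transgression sends $c_1\mapsto 4\iota_3$, the coefficient $4$ being the order of $H^3(BPU(4);\Zz)$, so the fundamental class $\iota_3$ survives to a generator $x_3\in H^3$ of order exactly $4$; this forces $4x_3=0$ and, $x_3$ having odd degree, $2x_3^2=0$. The higher transgressions of $c_2,c_3,c_4$ together with the torsion of $H^*(K(\Zz,3);\Zz)$ produce the remaining indecomposable torsion survivors, and in degrees $10$ and $15$ I would name these $x_{10}$ and $x_{15}$ and read off their order $2$. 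As an independent check of the additive answer I would feed Toda's ring $H^*(BPU(4);\Zz/2)$ and its $\mathrm{Sq}^1$-action into the Bockstein spectral sequence: the $d_1=\mathrm{Sq}^1$ homology recovers the free ranks, while the surviving $d_2$ confirms that $x_3$ alone has order $4$ and $x_{10},x_{15}$ have order $2$.

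Next I would establish the multiplicative relations. The vanishing of each torsion generator against $\alpha_2$ and $\alpha_3$ I would obtain as in Vistoli's argument behind Theorem~\ref{thm:vistoli}: the torsion is controlled by the map $\Theta'_4\colon K_4\to\Zz[\eta]/(4\eta)$ induced by $\Zz/4\hookrightarrow T^4$, and a direct computation with the explicit generators gives $\Theta'_4(\alpha_2)=\Theta'_4(\alpha_3)=0$ while $\Theta'_4(\alpha_4)$ and $\Theta'_4(\alpha_6)$ are nonzero. Thus $\alpha_2,\alpha_3\in K'_4$ annihilate every torsion class, giving the six relations $\alpha_2x_3=\alpha_3x_3=\alpha_2x_{10}=\alpha_3x_{10}=\alpha_2x_{15}=\alpha_3x_{15}=0$, whereas $\alpha_4,\alpha_6$ act nontrivially and hence enter the sole remaining relation
\[x_3^6\alpha_6+x_3^4x_{10}\alpha_4+x_3^5x_{15}+x_{10}^3+x_{15}^2=0.\]
I would verify this in degree $30$ by evaluating each of its five monomials in the associated graded of the spectral sequence (equivalently in $H^{30}(BPU(4);\Zz/2)$ via Toda's presentation, then checking the integral lift) and confirming that they span the one-dimensional space of relations there.

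The main obstacle is completeness: showing that the natural surjection $K_4\otimes\Zz[x_3,x_{10},x_{15}]/I\to H^*(BPU(4);\Zz)$ is an isomorphism rather than merely onto. I would settle this by a degree-by-degree comparison of Poincaré series, matching the presented ring against the additive structure found above and showing that any further relation would drop the rank below the computed value. The delicate bookkeeping throughout concerns the products of the order-$4$ class $x_3$ with the surviving free classes $\alpha_4,\alpha_6$: these are exactly the terms that are neither forced to vanish, as the $\alpha_2,\alpha_3$ products are, nor free, and tracking them is what makes the degree-$30$ relation, and the appearance of the extra generator $x_{15}$ with no analogue in the odd-prime case, unavoidable.
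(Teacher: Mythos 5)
Your proposal has two genuine gaps, and the first is fatal as written. You derive the six relations $\alpha_2x_i=\alpha_3x_i=0$ from the claim that ``the torsion is controlled by $\Theta'_4$,'' i.e.\ that $K'_4$ annihilates every torsion class, in the style of Theorem \ref{thm:vistoli}. But Vistoli's theorem is stated and proved only for odd primes, and nothing like it is known for $n=4$; indeed the answer for $n=4$ visibly breaks the odd-prime pattern (an extra generator $x_{15}$, a class $x_3$ of order $4$ rather than $p$, and a genuine relation in degree $30$ mixing $\alpha_4,\alpha_6$ with the torsion generators). That $\alpha_2$ and $\alpha_3$ kill the torsion classes is precisely what must be proven, and the paper proves it one relation at a time: it first determines $\im\rho$ for the mod $2$ reduction $\rho$ (Lemma \ref{lem:image rho}, which itself requires computing $\ker Sq^1$ on Toda's ring and a $\Zz/4$-Bockstein argument), then combines this with order bookkeeping — crucially Propositions \ref{prop:order} and \ref{prop:K_4}, which show $x_1f$ has order exactly $4$ for $f\in\Zz[\bar\alpha_4,\bar\alpha_6]$ — and with the comparison maps $\bar\Delta^*$ to $BPU(2)$ and $\phi^*$ to $BSO(6)$. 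Your $\Theta'_4$ computations ($\Theta'_4(\alpha_2)=\Theta'_4(\alpha_3)=0$, $\Theta'_4(\alpha_4)\neq0$) are correct arithmetic but carry no force without the unproved structural claim. Relatedly, your degree-$30$ verification ``mod $2$, then check the integral lift'' only shows the relation holds modulo $2H^{30}$; one still needs the fact (established in the paper from the orders of $x_1$, $y_{2,1}$, $y_{2,(1,0)}$) that even-degree torsion in $H^*(BPU(4);\Zz)$ has order at most $2$. Also, $x_{10}$ and $x_{15}$ are not produced by transgressions of $c_2,c_3,c_4$: they are pulled back from the base, namely $\chi^*(y_{2,1})$ and $\chi^*(y_{2,(1,0)})$, and their nonvanishing requires the diagonal comparison $BPU(2)\to BPU(4)$ (Proposition \ref{prop:nonzero}), not just inspection of the spectral sequence.

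The second gap is your completeness step. A ``degree-by-degree comparison of Poincar\'e series'' presupposes the full additive structure of $H^*(BPU(4);\Zz)$, i.e.\ a complete computation of $^UE_\infty$ over the base $K(\Zz,3)$ (infinitely many generators $y_{p,I}$, with differentials and extension problems unresolved) — this is exactly the hard part you cannot assume, and your Bockstein spectral sequence check only recovers mod $2$ information and orders, not the multiplicative extensions in all degrees. The paper circumvents this entirely: for injectivity it maps a putative relation through $\phi^*\rho$ into $H^*(BSO(6);\Zz/2)\cong\Zz/2[w_2,\dots,w_6]$, observes that the ideal $I_1$ of relations among the images $g_1,\dots,g_5$ is a height-one prime in a UFD, hence principal, identifies its generator with the image of the degree-$30$ relation, and then finishes with the order-$4$ information from Proposition \ref{prop:K_4}. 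Without some substitute for that argument (or an honest computation of $E_\infty$ with extensions), your surjection $K_4\otimes\Zz[x_3,x_{10},x_{15}]/I\to H^*(BPU(4);\Zz)$ is not shown to be injective.
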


One of the results in Toda \cite{Tod87} gives  $H^*(BPSO(4m+2);\Zz/2)$, $m\geq 1$.  In particular, this gives the mod $2$ cohomology of $BPU(4)$, since it is known that $PU(4)$ is homeomorphic to $PSO(6)$. 
\begin{thm}[Toda {\cite[(4.10)]{Tod87}}]\label{thm:toda}
	The mod $2$ cohomology of $BPU(4)$ is given by
	\[H^*(BPU(4);\Zz/2)\cong\Zz/2[y_2,y_3,y_5,y_8,y_9,y_{12}]/J,\ \ \deg(y_i)=i,\]
where $J$ is the ideal generated by 
	\[y_2y_3,\ y_2y_5,\ y_2y_9,\ y_9^2+y_3^2y_{12}+y_5^2y_8.\]
\end{thm}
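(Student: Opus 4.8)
The plan is to reduce everything to a single Serre spectral sequence driven by the Kudo transgression theorem. Since $PU(4)\cong PSO(6)$ and $SU(4)\cong Spin(6)$, I would work with the central extension $1\to\Zz/4\to SU(4)\to PU(4)\to1$ and its associated fibration $B\Zz/4\to BSU(4)\to BPU(4)$, running the mod $2$ Serre spectral sequence with $E_2=H^*(BPU(4);\Zz/2)\otimes H^*(B\Zz/4;\Zz/2)$ converging to $H^*(BSU(4);\Zz/2)$. The fiber cohomology is $H^*(B\Zz/4;\Zz/2)=E(u)\otimes\Zz/2[v]$ with $|u|=1$, $|v|=2$ and $\mathrm{Sq}^1u=0$, while the total space is the known polynomial ring $H^*(BSU(4);\Zz/2)=\Zz/2[c_2,c_3,c_4]$. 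The key point is that both fiber and total space are fully understood, so the spectral sequence is solved ``backwards'' to reconstruct the base $H^*(BPU(4);\Zz/2)$; equivalently one may run the classifying-map fibration $BSU(4)\to BPU(4)\to K(\Zz/4,2)$ in the forward direction. As an independent check I would rerun the argument through $\Rr P^\infty=B\Zz/2\to BSO(6)\to BPSO(6)$, using $H^*(BSO(6);\Zz/2)=\Zz/2[w_2,\dots,w_6]$ and the Wu formulas.

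First I would pin down the transgressions. The class $u$ transgresses to the mod $2$ reduction of the extension class, giving the degree-$2$ generator $y_2=\tau(u)$. The polynomial generator $v$ is the reduction of the generator of $H^2(B\Zz/4;\Zz)\cong\Zz/4$, which transgresses to a nonzero $4$-torsion class in $H^3(BPU(4);\Zz)$; hence $v$ is transgressive and I set $y_3:=\tau(v)$. The Kudo transgression theorem then forces $\tau(v^2)=\mathrm{Sq}^2y_3=y_5$ and $\tau(v^4)=\mathrm{Sq}^4y_5=y_9$, producing exactly the generators in degrees $2,3,5,9$ (note that these degrees coincide with those of the polynomial generators of $H^*(K(\Zz/2,2);\Zz/2)$). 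The remaining generators $y_8,y_{12}$ are base classes surviving to $E_\infty^{*,0}$; I would identify them through the edge map $\pi^*\colon H^*(BPU(4);\Zz/2)\to\Zz/2[c_2,c_3,c_4]$, with $\pi^*y_8=c_4$ up to decomposables and $\pi^*y_{12}$ a suitable degree-$12$ Chern class. Finally, because $y_2=\tau(u)$ while $y_3,y_5,y_9$ are transgressions of powers of $v$, the Leibniz rule applied to the classes $u\,v^{2^k}$ yields the relations $y_2y_3=y_2y_5=y_2y_9=0$.

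Having located the six generators and the three quadratic relations, I would reconstruct the ring by comparing Poincaré series: the series of $\Zz/2[c_2,c_3,c_4]$ together with the transgression cancellations must match that of the proposed quotient, which both certifies that no further generators appear and forces a single relation in degree $18$. That relation $y_9^2+y_3^2y_{12}+y_5^2y_8=0$ I expect to extract from the multiplicative structure of $E_\infty$ by resolving the extension problem in total degree $18$, using naturality of $\mathrm{Sq}$ (concretely, the Wu formulas in the $BSO(6)$ model) to fix the coefficients. To confirm that nothing is missed, I would restrict to a maximal elementary abelian $2$-subgroup and invoke Quillen's $F$-isomorphism theorem, verifying that the restriction map is injective modulo exactly the nilpotents accounted for by $y_2,y_3,y_5,y_9$.

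The hard part will be this last stage: the spectral sequence is being solved for its base rather than its total space, so neither the differentials nor the multiplicative extensions are automatic, and the degree-$18$ relation in particular requires distinguishing a genuine relation in $H^*(BPU(4);\Zz/2)$ from an artifact of the filtration. Determining that $y_9^2$ equals \emph{precisely} $y_3^2y_{12}+y_5^2y_8$, rather than some other combination of classes of the same degree, is the one step where I would expect to need either an explicit Steenrod-operation computation or the cross-comparison with the $BSO(6)\to BPSO(6)$ fibration in order to settle the coefficients unambiguously.
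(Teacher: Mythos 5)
The paper does not actually prove this theorem---it quotes it from Toda---so your attempt must stand on its own, and as it stands it has a structural gap: you are reading off the ring structure of the \emph{base} of a fibration from a Serre spectral sequence that abuts to the \emph{total space}. In $B\Zz/4\to BSU(4)\xrightarrow{\pi} BPU(4)$ the base column enters only through the edge homomorphism $H^s(BPU(4);\Zz/2)=E_2^{s,0}\twoheadrightarrow E_\infty^{s,0}\hookrightarrow H^s(BSU(4);\Zz/2)$, whose composite is $\pi^*$. So your Leibniz computation on the classes $uv^{2^k}$ (e.g.\ $d_2(uy_3)=y_2y_3$) proves only that $y_2y_3,\,y_2y_5,\,y_2y_9$ lie in $\ker\pi^*$, not that they vanish in $H^*(BPU(4);\Zz/2)$: a nonzero class $y_2y_3\in H^5$ that is hit by a differential is perfectly consistent with convergence, since differentials into the bottom row kill base classes without any trace in $H^*(BSU(4);\Zz/2)$. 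For the same reason the Poincar\'e-series step cannot ``certify that no further generators appear'': fiber plus total space do not determine the Poincar\'e series of the base, because base classes in $\ker\pi^*$ absorbed by differentials are invisible in the abutment. This inverse problem is exactly why Toda's proof requires auxiliary detection maps---$\phi^*$ to $H^*(BSO(6);\Zz/2)$ with Wu-formula computations, $\pi^*$ to $H^*(BU(4);\Zz/2)$, restrictions to smaller subgroups---to establish both the relations and the completeness of the generator list. Your ``forward'' alternative $BSU(4)\to BPU(4)\to K(\Zz/4,2)$ is the legitimate direction (it is the analogue of the $BU(4)\to BPU(4)\to K(\Zz,3)$ machinery of Sections \ref{sec:SS}--\ref{sec:K3}), but there the real work---computing all differentials on the $H^*(K(\Zz/4,2);\Zz/2)$ columns and resolving the multiplicative extensions, notably in total degree $18$---is precisely what the sketch leaves open; Kudo's theorem supplies only the transgressions you already listed.

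There are also two concrete misidentifications. First, $\pi^*(y_8)=c_4$ up to decomposables is false: restricting Toda's formulas (Proposition \ref{prop:toda image pi}) along $BSU(4)\to BU(4)$, i.e.\ setting $c_1=0$, gives $\pi^*(y_8)=c_2^2$ and $\pi^*(y_{12})=c_3^2$, so the image of $\pi^*$ is $\Zz/2[c_2^2,c_3^2]$ and $c_4$ is not in it at all---it is detected in positive fiber filtration (already $c_2$ is detected by $uy_3$ in filtration $3$, not by a base class). Hence the $E_\infty^{*,0}$ bookkeeping underlying your series comparison starts from a wrong picture. Second, $Sq^4(y_5)=y_3^3+y_9$, not $y_9$ (Proposition \ref{prop:w_i}); this is harmless as a change of generators, but with $y_9'=Sq^4(y_5)$ the degree-$18$ relation becomes $y_9'^2+y_3^6+y_3^2y_{12}+y_5^2y_8=0$, so the coefficients you plan to fix ``by Wu formulas in the $BSO(6)$ model'' must be tracked against this shift. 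What is sound in your sketch---the transgressive generators $y_2=\tau(u)$, $y_3=\tau(v)$ (correctly justified by comparison with the integral spectral sequence), $y_5=Sq^2y_3$, and the degree-$9$ class $Sq^4y_5$---matches Proposition \ref{prop:odd generator}; and your closing paragraph correctly locates where the proof actually lives, namely in the cross-comparison with $BSO(6)$, but it identifies that step rather than carrying it out.
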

In the notation of Theorem \ref{thm:toda}, we determine the Steenrod algebra structure of $H^*(BPU(4);\Zz/2)$ as follows.
\begin{thm}\label{thm:steenrod}
	$Sq^1(y_2)=Sq^1(y_3)=0$, $Sq^1(y_5)=y_3^2$, $Sq^1(y_8)=y_3^3$, $Sq^1(y_9)=y_5^2$, $Sq^1(y_{12})=y_3y_5^2$; 
	$Sq^2(y_3)=y_5$, $Sq^2(y_5)=0$, $Sq^2(y_8)=y_5^2$, $Sq^2(y_9)=y_3^2y_5$, $Sq^2(y_{12})=y_2y_{12}+y_3^2y_8$; 
	$Sq^4(y_5)=y_3^3+y_9$,  $Sq^4(y_8)=y_2^2y_8+y_{12}+y_3^4$, $Sq^4(y_9)=y_3y_5^2$, $Sq^4(y_{12})=y_2^2y_{12}+y_3^2y_5^2$; 
	$Sq^8(y_9)=y_3y_5y_9+y_5y_{12}+y_8y_9$, $Sq^8(y_{12})=y_3^4y_8+y_8y_{12}$.
\end{thm}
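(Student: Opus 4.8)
The plan is to determine only the operations $Sq^{2^k}$ on the six ring generators: by the Adem relations every $Sq^i$ is a polynomial in these, so together with the Cartan formula they recover the entire Steenrod algebra action. Four ingredients will drive the computation: the instability relations $Sq^{\deg y}y=y^2$ and $Sq^j y=0$ for $j>\deg y$; the Bockstein together with the integral ring of Theorem \ref{thm:BPU4}; naturality with respect to the fibrations $\Cc P^\infty=BU(1)\to BU(4)\to BPU(4)$ and $B(\Zz/2)\to BSO(6)\to BPU(4)$ (using $PU(4)\cong PSO(6)$), where the Steenrod action on the fibre resp.\ total space is classical; and the fact that the total square is a ring map, which must be compatible with the relations of Theorem \ref{thm:toda}.

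First I would pin down $Sq^1$, the mod $2$ reduction of the integral Bockstein, by reading the torsion of $H^*(BPU(4);\Zz)$ off Theorem \ref{thm:BPU4}. The relation $4x_3=0$ shows the integral class detecting $y_2$ has order $4$, so its first Bockstein vanishes and $Sq^1 y_2=0$; since $y_3$ is the reduction of $x_3$, also $Sq^1 y_3=0$. The relation $2x_3^2=0$ exhibits an order $2$ integral class whose Bockstein preimage is $y_5$, giving $Sq^1 y_5=y_3^2$, and the values on $y_8,y_9,y_{12}$ follow the same way from the torsion generators $x_{10},x_{15}$ and their products. As $Sq^1$ is a derivation, these values on generators determine it everywhere, and the outcome can be cross-checked against the relations of Theorem \ref{thm:toda}.

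Next I would obtain the low- and middle-degree squares by transgression and naturality. In the Serre spectral sequence of $BU(1)\to BU(4)\to BPU(4)$ the fibre class $u\in H^2(\Cc P^\infty)$ transgresses to $y_3$, so by the Kudo transgression theorem $u^2=Sq^2u$ transgresses to $Sq^2 y_3$; since $H^5$ is spanned by $y_5$ (as $y_2y_3=0$), this forces $Sq^2 y_3=y_5$. The operations $Sq^2 y_5$, $Sq^4 y_5=y_3^3+y_9$, and the squares on $y_8,y_9$ in degrees where no $y_2$-divisible class intervenes are then read off from the Wu formula applied to the images $\pi^*y_i\in H^*(BSO(6))=\Zz/2[w_2,\dots,w_6]$, once these images have been expressed in the $w_j$; determining $\pi^*$ is itself an essential preliminary, carried out through the spectral sequence of $B(\Zz/2)\to BSO(6)\to BPU(4)$, in which $y_2$ is the transgression of the fibre generator and hence lies in $\ker\pi^*$.

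This last fact locates the main obstacle. The terms of the answer that are divisible by $y_2$ — for instance $y_2^2y_8$ in $Sq^4 y_8$, $y_2y_{12}$ in $Sq^2 y_{12}$, and $y_2^2y_{12}$ in $Sq^4 y_{12}$ — lie in $\ker\pi^*$ and are invisible to the Wu-formula computation; moreover $Sq^8$ is indecomposable in the Steenrod algebra, so $Sq^8 y_9$ and $Sq^8 y_{12}$ cannot be reduced to lower operations and must be found directly. For the visible part I would compute $Sq^8\pi^*y_9$ and $Sq^8\pi^*y_{12}$ by the Cartan formula and invert $\pi^*$ on its image. For the $y_2$-divisible part I would use that multiplication by $y_2$ is highly degenerate (the relations $y_2y_3=y_2y_5=y_2y_9=0$ leave only a small space of $y_2$-divisible classes in each degree) together with the map $BPU(4)\to K(\Zz/2,2)$ classifying $y_2$, and I would fix the surviving coefficients by applying the total square to the relation $y_9^2+y_3^2y_{12}+y_5^2y_8=0$ and to $y_2y_3=y_2y_5=y_2y_9=0$, expanding degree by degree. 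The Adem relations among $Sq^1,Sq^2,Sq^4$ then provide the final consistency checks.
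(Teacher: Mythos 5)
Your overall architecture (transgression/Kudo for the tower $y_3,y_5,y_9$; Wu--Cartan computations after restricting along $BSO(6)\to BPU(4)$; the instability relation $Sq^9(y_9)=y_9^2$ to pin down $Sq^8(y_9)$) matches the paper's, but there is a genuine gap exactly at the point you flag as "the main obstacle," and your proposed remedies for it do not work. The $y_2$-divisible coefficients (e.g.\ $y_2y_{12}$ in $Sq^2(y_{12})$, $y_2^2y_8$ in $Sq^4(y_8)$, $y_2^2y_{12}$ in $Sq^4(y_{12})$, and ruling out monomials in $y_2,y_8,y_{12}$ other than $y_8y_{12}$ in $Sq^8(y_{12})$) cannot be fixed by your tools. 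The classifying map $BPU(4)\to K(\Zz/2,2)$ gives only naturality tautologies about $Sq^I(y_2)$ and constrains nothing. Applying the total square to the relations of Theorem \ref{thm:toda} is provably blind to these terms: the unknown even-degree squares $Sq^{2i}(y_8)$, $Sq^{2i}(y_{12})$ enter every such expansion multiplied by factors built from $y_3,y_5,y_9$ and their squares under $Sq$ (e.g.\ $Sq(y_3^2)=y_3^2+y_5^2+y_3^4$), and every such factor annihilates all $y_2$-divisible classes because $y_2y_3=y_2y_5=y_2y_9=0$; so the $y_2$-coefficients drop out identically. Adem consistency also underdetermines them: for instance $Sq^2Sq^2(y_8)=Sq^3Sq^1(y_8)$ is satisfied both by the correct value $Sq^2(y_8)=y_5^2$ and by $y_5^2+y_2y_8$. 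What is missing is the paper's third restriction map, $\pi^*:H^*(BPU(4);\Zz/2)\to H^*(BU(4);\Zz/2)=\Zz/2[c_1,\dots,c_4]$ along the fibration \eqref{eq:map pi}, together with Toda's formulas $\pi^*(y_2)=c_1$, $\pi^*(y_8)=c_1c_3+c_2^2$, $\pi^*(y_{12})=c_1^2c_4+c_3^2+c_1c_2c_3$ (Proposition \ref{prop:toda image pi}) and the Wu formula for mod $2$ Chern classes: since $\pi^*(y_2)=c_1\neq 0$, this is the one map that detects $y_2$-divisible classes, and it is how the paper (Proposition \ref{prop:steenrod pi}) fixes all the coefficients above. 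You mention the fibration $BU(1)\to BU(4)\to BPU(4)$ but use only its fiber; you never exploit its total space, and without that your system of constraints is demonstrably underdetermined.

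A secondary problem is circularity in your treatment of $Sq^1$. You derive it from the integral ring of Theorem \ref{thm:BPU4}, but in the paper that theorem is proved \emph{after} and \emph{using} Theorem \ref{thm:steenrod}: the proof of Theorem \ref{thm:BPU4} relies on Lemma \ref{lem:image rho}, whose $\ker Sq^1$ computation (Section 10) needs the full Steenrod structure. (Also, "the integral class detecting $y_2$" does not exist: $H^2(BPU(4);\Zz)=0$; the order-$4$ phenomenon lives in the $\Zz/4$-Bockstein of $y_2$, not in an integral lift.) This part is repairable without Theorem \ref{thm:BPU4}: $Sq^1(y_3)=0$, $Sq^1(y_5)=y_3^2$, $Sq^1(y_9)=y_5^2$ follow by naturality from $K(\Zz,3)$ (as in Corollary \ref{cor:steenrod square for odd generator}), $Sq^1(y_2)=0$ follows from $\phi^*$ or from expanding $Sq(y_2y_3)=0$, and $Sq^1(y_8)$, $Sq^1(y_{12})$ from $\phi^*$ plus $Sq^1Sq^1=0$, which is how the paper proceeds. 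Your degree-$9$ analysis of $Sq^4(y_5)=y_3^3+y_9$ via the Wu formula on $\bar w_5$ is sound and in fact reproduces Proposition \ref{prop:w_i}, including the needed sharpening of Toda's mod $(w_2)$ formula for $\phi^*(y_9)$; but the even-degree $y_2$-coefficients remain an unfilled hole in the proposal.
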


 In the next two sections, we review Gu's work in \cite{Gu21}, where he constructs a fibration  with $BPU(n)$ as the total space, and provides an approach to calculate the Serre spectral sequence associated to this fibration. 
 %In Section \ref{sec:proof K_4}, we prove Theorem \ref{thm:K_4}

	\section{Serre spectral sequences in the computation of $H^*(BPU(n))$}\label{sec:SS}
	
	%We adapt the strategy used in \cite{Gu21} to compute the cohomology of $BPU(n)$. 
	%In this section we review the main tools constructed in \cite{Gu21} to compute  the cohomology of $BPU(n)$. 
	\subsection{The spectral sequence $^UE$}
	The short exact sequence of Lie groups
	\[1\to S^1\to U(n)\to PU(n)\to 1\]
	induces  a fibration of their classifying spaces 
	\begin{equation}\label{eq:map pi}
		BS^1\to BU(n)\xrightarrow{\pi} BPU(n).
	\end{equation}
	Since $BS^1$ is of the homotopy type of the Eilenberg-Mac Lane space $K(\Zz,2)$, we obtain another induced fibration:
	\begin{equation}\label{eq:map chi}
		U:BU(n)\xrightarrow{\pi} BPU(n)\xrightarrow{\chi} K(\Zz,3).
	\end{equation}
	(Cf. \cite{Gan67}.) Let $^UE$ be the cohomological Serre spectral sequence associated to this fibration. The $E_2$ page of this spectral sequence has the form
	\[^UE_2^{s,t}=H^s(K(\Zz,3);H^t(BU(n)))\Longrightarrow H^{s+t}(BPU(n)).\]
	Notice that $^UE_2^{*,*}$ is concentrated in even rows,  so all $d_2$ differentials are trivial, and we have $^UE_2^{*,*}={^UE}^{*,*}_3$.
	%We will use this spectral sequence to compute the cohomology of $BPU(n)$. To do so, we follow the strategy in \cite{Gu21} to compare it with two other spectral sequences introduced in the next subsection in order to make the computation of certain differentials in $^UE$ more easily.
	
	\subsection{The spectral sequences $^KE$ and $^TE$}
	Let $T^n$ and $PT^n=T^n/S^1\cong T^{n-1}$ be the maximal tori of $U(n)$ and $PU(n)$ respectively, and let $\psi:T^n\to U(n)$ and $\psi':PT^n\to PU(n)$ be the inclusions.
	Then there is an exact sequence of Lie groups 
\[1\to S^1\to T^n\to PT^n\to 1,\]
which as above induces a fibration 
\[T:BT^n\to BPT^n\to K(\Zz,3).\]
We also consider the path fibration 
\[K: BS^1\simeq K(\Zz,2)\to *\to K(\Zz,3),\]
where $*$ denotes a contractible space. Let $^TE$ and $^KE$ denote the cohomological Serre spectral sequences associated  to $T$ and $K$ respectively.

Let $\varphi:S^1\to T^n$ be the diagonal map. Then there is the following homotopy commutative diagram of fibrations
\begin{gather}\label{diag:SS}
\begin{gathered}
	\xymatrix{K:\ar[d]^{\Phi}& BS^1\ar[r] \ar[d]^{B\varphi}&\ast\ar[r] \ar[d]&K(\Zz,3)\ar[d]^{=}\\
		T:\ar[d]^{\Psi} & BT^n\ar[r] \ar[d]^{B\psi}&BPT^n\ar[r] \ar[d]^{B\psi'}&K(\Zz,3)\ar[d]^{=}\\
		U: & BU(n)\ar[r]&BPU(n)\ar[r]&K(\Zz,3)}
\end{gathered}
\end{gather}

We write $^Ud_*^{*,*}$, $^Td_*^{*,*}$, $^Kd_*^{*,*}$ for the differentials of $^UE$, $^TE$, and $^KE$, respectively. We also use the simplified notation $d^{*,*}_*$ if there is no ambiguity. 

\subsection{Spectral sequence maps on $E_2$ pages}\label{subsec:maps}
Recall that the cohomology rings of the fibers $BS^1$, $BT^n$ and $BU(n)$ in \eqref{diag:SS} are
\begin{align*}
	H^*(BS^1;\Zz)&=\Zz[v],\ \deg(v)=2,\\
	H^*(BT^n;\Zz)&=\Zz[v_1,\dots,v_n],\ \deg(v_i)=2,\\
	H^*(BU(n);\Zz)&=\Zz[c_1,\dots,c_n],\ \deg(c_i)=2i.\label{eq:BPUn}
\end{align*}
Here $c_i$ is the $i$th universal Chern class of the classifying space $BU(n)$ for $n$-dimensional complex bundles.
The induced homomorphisms between these cohomology rings are given by
\[
	B\varphi^*(v_i)=v,\quad B\psi^*(c_i)=\sigma_i(v_1,\dots,v_n),
\]
where $\sigma_i(v_1,\dots,v_n)$ is the $i$th elementary symmetric polynomial in the variables $v_1,\dots,v_n$.
Hence, from \eqref{diag:SS} we obtain the maps $\Phi^*: {^T}E_2^{*,*}\to {^K}E_2^{*,*}$ and $\Psi^*:{^U}E_2^{*,*}\to {^T}E_2^{*,*}$ of spectral sequences on $E_2$ pages. 

The strategy in \cite{Gu21} for computing $^UE$ is to compare its differentials with the differentials in $^TE$ and $^KE$, which we will explain in the next section. 

	\section{The cohomology of $K(\Zz,3)$ and  differentials of $^KE$ and $^TE$}\label{sec:K3}
%In this section, we review some results from \cite{Gu21}. 
Throughout this section, we write $R[x;k]$, $E_R[y;k]$ for the polynomial algebra and exterior algebra respectively, with coefficients in a commutative ring $R$, and with one generator of degree $k$. We also use the simplified notation $E[y;k]$ for $R=\Zz$.

	Let $C(3)$ be the (differential graded) $\Zz$-algebra
	\[\begin{split}
		C(3)&=\bigotimes_{p\neq 2\text{ prime}}\big(\bigotimes_{k\geq0}\Zz[y_{p,k};2p^{k+1}+2]\otimes E[x_{p,k};2p^{k+1}+1]\big)\\
		&\otimes\bigotimes_{k\geq0}\Zz[x_{2,k};2^{k+2}+1]\otimes\Zz[x_1;3].
	\end{split}\]
	For notational convenience, we set $y_{2,0}=x_1^2$ and $y_{2,k}=x_{2,k-1}^2$ for $k\geq 1$. The differential of $C(3)$ is given by
	\[d(x_{p,k})=py_{p,k},\ d(y_{p,k})=d(x_1)=0,\text{ for } p \text{ a prime and }k\geq0.\]
	Gu \cite{Gu21} showed  that the integral cohomology ring of $K(\Zz,3)$ is isomorphic to the cohomology of $C(3)$, which is dual to the DGA construction of Cartan in \cite{Car54}.
	
	Let $_pA$ denote the $p$-primary component of a commutative ring $A$, $p$ a prime. 
	For $R=\bigoplus_{i>0} R^i$ a graded commutative
	ring without unit, let $\hat R=R\oplus R^0$ be the (graded commutative) unital ring, where the degree zero summand $R^0=\Zz$ has a generator acting as the
	unit of the ring.
	
	\begin{thm}[{\cite[Proposition 2.14]{Gu21}}]\label{thm:cohomology of k(3)}
		The cohomology ring of $K(\Zz,3)$ is given by
		\[H^*(K(\Zz,3);\Zz)\cong \bigotimes_{p}\widehat{R_p}\otimes\Zz[x_1]/(x_1^2-y_{2,0}),\]
		where $p$ ranges over all prime numbers, and $R_p={_pH}^*(K(\Zz,3);\Zz)$ is the $\Zz/p$-subalgebra (without unit) of $C(3)\otimes\Zz/p$ generated by the elements of the form
		\[y_{p,I}:=\frac{1}{p}d(x_{p,i_1}\cdots x_{p,i_m})=\sum_{j=1}^m(-1)^{j-1}x_{p,i_1}\cdots y_{p,i_j}\cdots x_{p,i_m},\]
		for any ordered sequence $I=(i_1,\dots,i_m)$ of nonnegative integers
		$i_1>\cdots>i_m$. For $I=(i)$, we simply write $y_{p,i}$ for $y_{p,I}$.
	\end{thm}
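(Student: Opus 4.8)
The plan is to use the isomorphism $H^*(K(\Zz,3);\Zz)\cong H^*(C(3))$ recorded above and to compute the cohomology of the differential graded algebra $C(3)$ directly. Write $C(3)=\bigotimes_p C_p$, where for an odd prime $p$ the factor is $C_p=\bigotimes_{k\geq 0}\big(\Zz[y_{p,k};2p^{k+1}+2]\otimes E[x_{p,k};2p^{k+1}+1]\big)$ with $d(x_{p,k})=py_{p,k}$, and where $C_2$ is the remaining factor carrying $x_1$ and the $x_{2,k}$ with the $y_{2,k}$ equal to squares. Each $C_p$ is a complex of free $\Zz$-modules, so one can compute degree by degree; since the generator degrees grow, only finitely many factors contribute in each total degree and all Künneth arguments are effectively finite.

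First I would record the cohomology of a single factor $A_k=\Zz[y_{p,k}]\otimes E[x_{p,k}]$ for $p$ odd: the cocycles are the polynomials in $y_{p,k}$, the coboundaries are the ideal generated by $py_{p,k}$, so $H^*(A_k)\cong\Zz\oplus\bigoplus_{j\geq 1}(\Zz/p)\,y_{p,k}^j$, concentrated in even degrees and $p$-torsion in positive degrees. Two consequences follow. On one hand, since $H^{>0}(C_p)$ is $p$-power torsion while $H^{>0}(C_q)$ is $q$-power torsion for $q\neq p$, both the tensor and the $\mathrm{Tor}$ terms of the Künneth formula for $C_p\otimes C_q$ vanish in positive degrees; this is exactly what splits $H^*(C(3))$ as the tensor product $\bigotimes_p\widehat{R_p}$, glued along the degree-zero summands $\Zz$ that serve as units. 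On the other hand, the $p$-primary classes all have exponent $p$, so the $p$-torsion subgroup injects into $H^*(C(3);\Zz/p)=C(3)\otimes\Zz/p$, where the differential vanishes; thus $R_p$ is literally a subalgebra of $C(3)\otimes\Zz/p$ and its ring structure is inherited, which is the precise sense in which the theorem is stated.

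It then remains to compute $H^*(C_p)$ and to identify its positive part with the subalgebra generated by the $y_{p,I}$. I would proceed by iterated Künneth over the index $k$, tracking the $\mathrm{Tor}$ contributions. The key computation is that $d(x_{p,i_1}\cdots x_{p,i_m})=p\,y_{p,I}$, so each $y_{p,I}$ is a cocycle whose $p$-multiple bounds; a degree count (for $|I|=2$, the class $y_{p,(i,j)}=y_{p,i}x_{p,j}-x_{p,i}y_{p,j}$ sits in degree $\deg y_{p,i}+\deg y_{p,j}-1$) shows these classes realize exactly the $\mathrm{Tor}$ summands, while products of the $y_{p,i}$ realize the ordinary tensor summands. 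An induction on the number of factors then shows that the $y_{p,I}$, together with their products, span $H^{>0}(C_p)$ and that $H^*(C_p)\cong\widehat{R_p}$ as rings.

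The main obstacle I expect is the multiplicative bookkeeping rather than the additive count: one must verify that products of the generators $y_{p,I}$ produce no unexpected relations beyond those forced by the exterior nature of the $x_{p,k}$, and that no positive-degree class is missed. The most delicate point is the prime $p=2$, where the $y_{2,k}=x_{2,k-1}^2$ are not free generators but squares, and where $x_1$ is a nonzero free class with $x_1^2=y_{2,0}$ a $2$-torsion class; treating this factor requires a separate analysis of $\Zz[x_1,x_{2,0},x_{2,1},\dots]$ with $d(x_{2,k})=2x_{2,k-1}^2$ (setting $x_{2,-1}=x_1$), from which the extra tensor factor $\Zz[x_1]/(x_1^2-y_{2,0})$ emerges.
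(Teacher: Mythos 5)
The paper does not prove this statement at all---it is imported verbatim from \cite{Gu21}---so your proposal has to stand on its own, and in outline it does follow the standard route (dual to Cartan's construction, and essentially Gu's): for odd $p$ the factor $C_p$ genuinely is a tensor product of the two-generator sub-DGAs $\Zz[y_{p,k}]\otimes E[x_{p,k}]$, your computation of each $H^*(\Zz[y_{p,k}]\otimes E[x_{p,k}])$ is correct, the degree count matching $y_{p,(i,j)}$ against the K\"unneth $\mathrm{Tor}$ summand in degree $\deg y_{p,i}+\deg y_{p,j}-1$ is correct, and the exponent-$p$ argument embedding the torsion into $C(3)\otimes\Zz/p$ is sound. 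But one of your intermediate claims is false as stated. You assert that for distinct primes both the tensor and the $\mathrm{Tor}$ terms of the K\"unneth formula vanish in positive degrees, ``which is exactly what splits $H^*(C(3))$ as $\bigotimes_p\widehat{R_p}$.'' This fails because $H^{>0}(C_2)$ is not $2$-power torsion: it contains the infinite-order class $x_1$ in degree $3$ (only $x_1^2=y_{2,0}$ is torsion). Consequently the cross terms $\Zz\{x_1\}\otimes R_q$ survive for every odd $q$, and they must: e.g.\ $H^{11}(K(\Zz,3);\Zz)\cong\Zz/3$ is generated by $x_1y_{3,0}$. The theorem's formula records precisely these classes through the factor $\Zz[x_1]/(x_1^2-y_{2,0})$, which multiplies \emph{all} the $\widehat{R_p}$, not just $\widehat{R_2}$. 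Taken literally, your vanishing claim would produce the ring $\Zz\oplus\bigoplus_pR_p$ with all mixed products zero, which is a different (wrong) answer. The fix is routine---only torsion--torsion tensor terms and all $\mathrm{Tor}$ terms vanish across distinct primes---but as written the step contradicts the statement being proved.

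The second, larger gap is at $p=2$, which you flag as ``delicate'' but defer entirely, and where your engine is structurally unavailable rather than merely harder to run: since $d(x_{2,k})=2x_{2,k-1}^2$ sends each polynomial generator into the square of the \emph{previous} one (with $x_{2,-1}=x_1$), the factor $\Zz[x_1]\otimes\bigotimes_k\Zz[x_{2,k}]$ does not decompose as a tensor product of sub-DGAs, so there is no iterated K\"unneth induction to perform; one needs a genuinely different device, for instance the exhaustion by the sub-DGAs $B_k=\Zz[x_1,x_{2,0},\dots,x_{2,k}]$ with an explicit analysis of each extension $B_k\subset B_k[x_{2,k+1}]$, or the comparison with Cartan's $A(\pi,n)$ that \cite{Gu21} uses. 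Since this factor is exactly where the relation $x_1^2=y_{2,0}$ and the extra tensor factor $\Zz[x_1]/(x_1^2-y_{2,0})$ originate, deferring it leaves the most characteristic part of the theorem unproved. Relatedly, even for odd $p$ the ``multiplicative bookkeeping'' you postpone is the actual content of the identification $H^{>0}(C_p)\cong R_p$: in the induction one must check that the cocycles realizing the iterated $\mathrm{Tor}$ summands are, modulo products of shorter classes, exactly the $y_{p,I}$ via $y_{p,(i_1,\dots,i_m,k)}=\frac{1}{p}d(x_{p,i_1}\cdots x_{p,i_m}x_{p,k})$, and that the only relations among products of the $y_{p,I}$ are those forced by $x_{p,k}^2=0$; without this, generation and freedom from unexpected relations---the two things the theorem asserts---are not established.
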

	
	\begin{rem}
	In \cite[Expos\'e 11]{Car54} a differential graded algebra $A(\pi,n)$, which is dual to $C(3)$ for $n=3$ and $\pi=\Zz$, is constructed
	to calculate the integral homology of $K(\pi,n)$ for any positive integer $n$ and a finitely generated abelian group $\pi$. \cite{Gu21} first gives an explicit formula for $H^*(K(\Zz,3);\Zz)$ in terms of generators and relations.
	\end{rem}

For $p$ a prime, the mod $p$ cohomology ring of $K(\Zz,3)$ is also given by the cohomology of the differential graded $\Zz/p$-algebra $(C(3)\otimes\Zz/p,d)$, which has an easier description (see \cite[(2.13)]{Gu21}). 
		Furthermore, the Steenrod algebra structure of $H^*(K(\Zz,3);\Zz/p)$ can be determined from the fact that $x_1$ (resp. $x_{p,k}$) is the the transgression image of $v$ (resp. $v^{p^{k+1}}$) $\in H^*(BS^1;\Zz/p)\cong\Zz/p[v]$ in the mod $p$ Serre spectral sequence $^KE$, using Kudo's transgression theorem \cite{Kud56}.
		In particular, for $p=2$, we have the following analogue of \cite[Proposition 2.4]{Gu21b}. 
	\begin{prop}\label{prop:mod 2 cohomology of k(3)}
$H^*(K(\Zz,3);\Zz/2)\cong\bigotimes_{k\geq0}\Zz/2[x_{2,k};2^{k+2}+1]\otimes\Zz/2[x_1;3]$.
		Moreover, $Sq^1(x_1)=0$, $Sq^1(x_{2,k+1})=x_{2,k}^2$, 
		$x_{2,k}=Sq^{2^{k+1}}Sq^{2^k}\cdots Sq^{2}(x_1)$, $k\geq 0$.
	\end{prop}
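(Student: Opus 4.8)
The plan is to take the ring structure as already established---the preceding paragraph identifies $H^*(K(\Zz,3);\Zz/2)$ with $C(3)\otimes\Zz/2$ in the $2$-primary generators, which is precisely $\bigotimes_{k\geq0}\Zz/2[x_{2,k};2^{k+2}+1]\otimes\Zz/2[x_1;3]$---and to extract every Steenrod operation from the path fibration $K:BS^1\to\ast\to K(\Zz,3)$ by transgression. The organizing principle is Borel's transgression theorem applied to the mod $2$ spectral sequence $^KE$: the classes $v^{2^k}$, $k\geq0$, form a simple system of generators of $H^*(BS^1;\Zz/2)=\Zz/2[v]$, and their transgressions are exactly the polynomial generators of the base. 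This lets me both name the generators as transgressions and then push Steenrod squares through the transgression using Kudo's theorem \cite{Kud56}.

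First I would record the base case. The generator $v\in H^2(BS^1;\Zz/2)$ is transgressive with transgression $\tau(v)=x_1$, this being the defining property of $x_1$ as the image of the fundamental class in $^KE$. Since $v$ is the mod $2$ reduction of the integral generator, $Sq^1(v)=0$, so Kudo's transgression theorem gives $Sq^1(x_1)=\tau(Sq^1 v)=\tau(0)=0$. Next I would run the induction producing the remaining generators together with the asserted formula. For each $k\geq0$ the class $v^{2^k}$ has degree $2^{k+1}$, so $v^{2^{k+1}}=(v^{2^k})^2=Sq^{2^{k+1}}(v^{2^k})$ is a top square. Assuming inductively that $v^{2^k}$ is transgressive with transgression an iterated square of $x_1$, Kudo's theorem shows $v^{2^{k+1}}$ is again transgressive and that its transgression is obtained by applying one further $Sq^{2^{k+1}}$. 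Writing $x_{2,-1}=x_1$ and $x_{2,k}=\tau(v^{2^{k+1}})$, this gives $x_{2,k}=Sq^{2^{k+1}}(x_{2,k-1})=Sq^{2^{k+1}}Sq^{2^k}\cdots Sq^2(x_1)$, of degree $3+2+4+\cdots+2^{k+1}=2^{k+2}+1$, and Borel's theorem identifies these transgressions with the polynomial generators $x_{2,k}$.

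Finally, the relation $Sq^1(x_{2,k+1})=x_{2,k}^2$ follows purely algebraically from the formula just obtained. Since $x_{2,k+1}=Sq^{2^{k+2}}(x_{2,k})$, the Adem relation $Sq^1 Sq^{2m}=Sq^{2m+1}$ (with $2m=2^{k+2}$) gives $Sq^1(x_{2,k+1})=Sq^{2^{k+2}+1}(x_{2,k})$; because $\deg(x_{2,k})=2^{k+2}+1$, the instability relation $Sq^{\deg}=(\,\cdot\,)^2$ converts the right-hand side into $x_{2,k}^2$.

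I expect the only genuine subtlety to be the spectral-sequence bookkeeping in the inductive step: one must verify that each power $v^{2^{k+1}}$ really survives to the page on which it transgresses and that its transgression is a fresh indecomposable generator rather than zero or a decomposable class. This is exactly what Borel's transgression theorem guarantees once the $v^{2^k}$ are recognized as a simple system of generators of the fiber, and it is forced anyway by the contractibility of the total space, which requires every positive-degree fiber class to die against the base. The remainder is routine manipulation with Kudo's theorem and the Adem relation.
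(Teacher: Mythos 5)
Your proposal is correct and is essentially the paper's own argument: the paper also takes the ring structure from $C(3)\otimes\Zz/2$ (the differential being trivial mod $2$) and derives the Steenrod structure from the fact that $x_1$ and $x_{2,k}$ are the transgressions of $v$ and $v^{2^{k+1}}$ in $^KE$, via Kudo's transgression theorem. You merely make explicit the details the paper leaves implicit --- Borel's theorem for the simple system $\{v^{2^k}\}$, the inductive use of the top square $v^{2^{k+1}}=Sq^{2^{k+1}}(v^{2^k})$, and the Adem relation $Sq^1Sq^{2^{k+2}}=Sq^{2^{k+2}+1}$ together with instability for $Sq^1(x_{2,k+1})=x_{2,k}^2$ --- all of which check out.
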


	The differentials of the integral spectral sequence $^KE$ are described in \cite{Gu21} as follows. Here we only list a part of the results in {\cite[Corollary 2.16]{Gu21}}, which will be sufficient for our purposes.
	
	\begin{prop}[{\cite[Corollary 2.16]{Gu21}}]\label{prop:differentials K}
		The higher differentials of $^KE^{*,*}_*$ satisfy
		\[\begin{split}
			&d_3(v)=x_1,\\
			&d_{2p^{k+1}-1}(p^kx_1v^{lp^e-1})=v^{lp^e-1-(p^{k+1}-1)}y_{p,k},\ e>k\geq 0,\ \gcd(l,p)=1,\\
			&d_r(x_1)=d_r(y_{p,k})=0\text{ for all }r\geq2,\,k\geq0,
		\end{split}\]
	and the Leibniz rule.
		Here $^KE^{3,2(lp^e-1)}_{2p^{k+1}-1}\subset {^KE}^{3,2(lp^e-1)}_2$ is generated by $p^kx_1v^{lp^e-1}$.
	\end{prop}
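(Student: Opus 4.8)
The plan is to exploit that the fibration $K$ is the path--loop fibration over $K(\Zz,3)$, whose total space is contractible, so the spectral sequence must converge trivially: $^KE_\infty^{s,t}=0$ except $^KE_\infty^{0,0}=\Zz$. Since the fiber cohomology $H^*(BS^1;\Zz)=\Zz[v]$ is $\Zz$-free, the $E_2$ page splits as $^KE_2^{s,t}=H^s(K(\Zz,3);\Zz)\otimes_\Zz H^t(BS^1;\Zz)$, whose base factor is the ring described in Theorem~\ref{thm:cohomology of k(3)}. Thus both the input $E_2$ and the output $E_\infty$ are known in advance, and the task is to exhibit the differentials that collapse the former to the latter.

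First I would fix the base differential $d_3(v)=x_1$. As $v$ is the fundamental class of the fiber $BS^1\simeq K(\Zz,2)$ and the total space is contractible, $v$ is transgressive with transgression a generator of $H^3(K(\Zz,3);\Zz)\cong\Zz$, which we name $x_1$. The Leibniz rule then gives $d_3(v^m)=m\,x_1v^{m-1}$, which is injective over $\Zz$ for $m\ge 1$, so the fiber column dies at $E_4$ in positive total degree, while the column $s=3$ is left with the torsion classes $x_1v^{\,j}$ of order dividing $j+1$. Combining Leibniz with the ring relation $x_1^2=y_{2,0}$ of Theorem~\ref{thm:cohomology of k(3)} and the fact that $y_{2,0}$ is $2$-torsion (being the square of an odd-degree class) already produces the case $p=2$, $k=0$: one computes $d_3(x_1v^{\,j})=j\,y_{2,0}v^{\,j-1}$, which equals $v^{\,j-1}y_{2,0}$ for $j$ odd and vanishes for $j$ even, matching the stated formula when $j=l2^e-1$. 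This outgoing $d_3$ is the first step of the $2$-primary staircase below.

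The core of the argument is the family of genuinely higher differentials onto the torsion generators $y_{p,k}$. By the $E_4$ computation the only classes available to kill $y_{p,k}$ and its $v$-multiples are the torsion classes $x_1v^{\,lp^e-1}$ in the column $s=3$. To identify the transgressions I would invoke Kudo's transgression theorem in the mod $p$ spectral sequence, which upgrades $d_3(v)=x_1$ to the transgressions $\tau(v^{p^{k+1}})=x_{p,k}$ recalled in the paragraph preceding Proposition~\ref{prop:mod 2 cohomology of k(3)}. Passing to integral coefficients through the Bockstein relation $\beta(x_{p,k})=y_{p,k}$, equivalently $d(x_{p,k})=p\,y_{p,k}$ in $C(3)$, converts these into the integral differentials $d_{2p^{k+1}-1}(p^kx_1v^{\,lp^e-1})=v^{\,lp^e-p^{k+1}}y_{p,k}$. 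For fixed $e$ and $l$ with $\gcd(l,p)=1$ these assemble into a staircase eliminating the $p$-primary torsion of $x_1v^{\,lp^e-1}$ one power of $p$ at a time: the $k$-th step ($k=0,\dots,e-1$, the $k=0$ one being the $d_3$ above when $p=2$) consumes the generator $v^{\,lp^e-p^{k+1}}y_{p,k}$ and leaves $p^kx_1v^{\,lp^e-1}$ as the surviving generator at page $2p^{k+1}-1$, so that the constraint $e>k$ is exactly the condition that this element is still nonzero.

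Finally I would confirm convergence: a degree-by-degree count shows that the listed differentials, together with the Leibniz rule applied to the compound torsion classes $y_{p,I}$, reduce $^KE_2$ to the trivial $E_\infty$, so that $x_1$ and the $y_{p,k}$ survive as permanent cycles on the base and no further differentials occur. Because $E_2$, $E_\infty$, and $d_3$ are fixed beforehand, this count also rigidifies the outcome, forcing the targets and coefficients above to be exactly as stated. I expect the main obstacle to be the third step: matching the mod $p$ Kudo transgressions with the integral generators $y_{p,k}$ through the Bockstein and tracking the $p$-adic orders along each staircase, so that the differential lengths $2p^{k+1}-1$ and the coefficients---which collapse to $1$ precisely because the $y_{p,k}$ are $p$-torsion---come out correctly. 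The explicit description of $H^*(K(\Zz,3);\Zz)$ in Theorem~\ref{thm:cohomology of k(3)}, together with multiplicativity, is what keeps this bookkeeping finite and tractable.
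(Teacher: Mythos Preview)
The paper does not prove this proposition: it is quoted verbatim as \cite[Corollary~2.16]{Gu21} and used as a black box, with no argument supplied beyond the citation. So there is no ``paper's own proof'' to compare against.

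Your sketch is a reasonable outline of how Gu's result is obtained: the transgression $d_3(v)=x_1$, Kudo's theorem for the mod~$p$ transgressions $\tau(v^{p^{k+1}})=x_{p,k}$, and the Bockstein lift to the integral classes $y_{p,k}$ are indeed the standard ingredients. The staircase picture you describe---tracking the $p$-adic order of $x_1v^{lp^e-1}$ as successive differentials peel off factors of $p$---is the right heuristic. One point to be careful about: the passage from the mod~$p$ transgression of $v^{p^{k+1}}$ to the integral differential $d_{2p^{k+1}-1}$ on $p^kx_1v^{lp^e-1}$ is not a formal consequence of Kudo plus Bockstein alone; one needs to verify that the relevant class in $^KE^{3,2(lp^e-1)}_{2p^{k+1}-1}$ really is generated by $p^kx_1v^{lp^e-1}$ (as asserted in the last line of the proposition), which requires inductively controlling the earlier pages. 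Your ``degree-by-degree count'' in the last paragraph gestures at this but does not carry it out. Gu's paper handles this bookkeeping explicitly; if you want a self-contained proof you would need to do the same.
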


To describe the differential of $^TE^{*,*}_*$, we first rewrite  $H^*(BT^n;\Zz)=\Zz[v_1,\dots,v_n]$ as $\Zz[v_1-v_n,\dots,v_{n-1}-v_n,v_n]$. Then $^TE^{*,*}_*$ is generated by elements of the form $\xi\cdot f(v_n)$, where $\xi\in H^*(K(\Zz,3);\Zz)$ and
$f(v_n)=\sum_ia_iv_n^i$ is a polynomial in $v_n$ with coefficients $a_i\in\Zz[v_1-v_n,\dots,v_{n-1}-v_n]$. Let $v_j'=v_j-v_n$ for $1\leq j\leq n-1$, and write 
$a_i=\sum_{t_1,\dots,t_{n-1}}k_{i,t_1,\dots,t_{n-1}}(v'_1)^{t_1}\cdots (v_{n-1}')^{t_{n-1}}$, $k_{i,t_1,\dots,t_{n-1}}\in\Zz$.
\begin{prop}[{\cite[Propositions 3.2-3.6]{Gu21}}]\label{prop:T}
In the above notation, the differential $^Td_r$ is given by 
\[^Td_r(\xi\cdot f(v_n))=\sum_i\sum_{t_1,\dots,t_{n-1}} {^K}d_r(k_{i,t_1,\dots,t_{n-1}}\xi v_n^i)\cdot(v'_1)^{t_1}\cdots (v_{n-1}')^{t_{n-1}},\]
where ${^K}d_r(k_{i,t_1,\dots,t_{n-1}}\xi v_n^i)$ is simply ${^K}d_r(k_{i,t_1,\dots,t_{n-1}}\xi v^i)$ with $v$ replaced by $v_n$. 
In particular, the differential $^Td_3^{0,*}$ is given by the ``formal devergence''
\[\nabla=\sum_{i=1}^n(\partial/\partial v_i):H^*(BT^n;\Zz)\to H^{*-2}(BT^n;\Zz)\]
in such a way that $^Td^{0,*}_3=\nabla(-)\cdot x_1$.
Moreover, the spectral sequence $^TE^{*,*}_*$ degenerates at $^TE^{0,*}_4$, i.e.,
\[
{^TE}^{0,*}_\infty={^T}E^{0,*}_4=\ker{^Td_3^{0,*}}=\Zz[v_1',\dots,v_{n-1}'].
\] 
\end{prop}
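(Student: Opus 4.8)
The plan is to reduce every differential of $^TE$ to one already computed for $^KE$ in Proposition~\ref{prop:differentials K}, using multiplicativity of the Serre spectral sequence together with the comparison maps coming from \eqref{diag:SS} and from the coordinate projection $T^n\to S^1$. First note that $H^*(BT^n;\Zz)$ is concentrated in even degrees, so $^TE_2^{s,t}=H^s(K(\Zz,3))\otimes H^t(BT^n)$ vanishes for $t$ odd, forcing $^Td_2=0$ and $^TE_2={^TE}_3$. To locate the first nontrivial differential I would transgress the fiber generators $v_i$. The map $\Phi^*\colon{^TE}\to{^KE}$ induced by \eqref{diag:SS} is the identity on the base row $^TE^{*,0}=H^*(K(\Zz,3))$ and sends each $v_i$ to $v$ (since $B\varphi^*(v_i)=v$); naturality of the differentials gives $\Phi^*({^Td_3}(v_i))={^Kd_3}(v)=x_1$, and since $\Phi^*$ is the identity on $^TE^{3,0}_3=\Zz\langle x_1\rangle$ I conclude that $^Td_3(v_i)=x_1$ for every $i$. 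As all $v_i$ lie in even degree, the Leibniz rule then yields $^Td_3(P)=\bigl(\sum_i\partial P/\partial v_i\bigr)x_1=\nabla(P)\,x_1$ for every $P\in\Zz[v_1,\dots,v_n]$, which is the asserted formula for $^Td_3^{0,*}$.

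Next I would isolate the permanent cocycles. From $^Td_3(v_i)=x_1$ we get $^Td_3(v_j')=x_1-x_1=0$ for $v_j'=v_j-v_n$, while for $r\geq4$ the target $^TE^{r,3-r}_r$ vanishes; hence each $v_j'$ is a permanent cocycle, and the base classes $\xi\in H^*(K(\Zz,3))$ are permanent cocycles as well. Writing a general class in the form $f(v_n)\xi=\sum_{i,t_1,\dots,t_{n-1}}k_{i,t_1,\dots,t_{n-1}}(v_1')^{t_1}\cdots(v_{n-1}')^{t_{n-1}}\,v_n^{\,i}\xi$ and applying the Leibniz rule (the factors $(v_j')^{t_j}$ being even-degree permanent cocycles), every higher differential reduces to the evaluation of $^Td_r$ on the classes $v_n^{\,i}\xi$.

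This reduction is the crux, and it is where $\Phi^*$ alone does not suffice: it pins down $^Td_r(v_n^{\,i}\xi)$ only modulo $\ker\Phi^*=(v_1',\dots,v_{n-1}')$. The device I would use is that the diagonal $\varphi\colon S^1\to T^n$ is split by the projection $p_n\colon T^n\to S^1$ onto the last coordinate, with $p_n\circ\varphi=\mathrm{id}$. This produces a map from the central extension $1\to S^1\to T^n\to PT^n\to1$ to $1\to S^1\to S^1\to1\to1$, hence a map of fibrations $\Xi\colon T\to K$ that is the identity on the base $K(\Zz,3)$, and therefore a map of spectral sequences $\Xi^*\colon{^KE}\to{^TE}$ which fixes the base classes and sends $v\mapsto v_n$ (as $(Bp_n)^*(v)=v_n$). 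Naturality of $\Xi^*$ now gives the exact identity $^Td_r(v_n^{\,i}\xi)=\Xi^*({^Kd_r}(v^i\xi))={^Kd_r}(v^i\xi)\big|_{v\mapsto v_n}$, and combining this with the Leibniz reduction of the previous paragraph yields the stated formula for $^Td_r(f(v_n)\xi)$. I expect the construction and use of this reverse comparison map $\Xi^*$ to be the main obstacle, since without it one cannot rule out spurious $v_j'$-terms in $^Td_r(v_n^{\,i}\xi)$.

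Finally, for the degeneration statement I would compute $\ker\nabla$. Passing to the coordinates $v_1',\dots,v_{n-1}',v_n$, the chain rule shows that $\nabla$ becomes $\partial/\partial v_n$, so $\ker\nabla=\Zz[v_1',\dots,v_{n-1}']$; consequently $^TE^{0,*}_4=\ker{^Td_3^{0,*}}=\Zz[v_1',\dots,v_{n-1}']$. Since this subalgebra is generated by the permanent cocycles $v_j'$, all higher differentials vanish on it, and no differential can map into the column $s=0$ because the source $^TE^{-r,*}_r$ is zero; therefore $^TE^{0,*}_\infty={^TE}^{0,*}_4$, completing the proof.
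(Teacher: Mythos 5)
Your proof is correct, and it is worth noting at the outset that the paper itself contains no proof of this proposition: it is imported verbatim from \cite{Gu21}, so the relevant comparison is with Gu's argument. Gu exploits the fact that the central extension $1\to S^1\to T^n\to PT^n\to 1$ splits, via the isomorphism $T^n\cong T^{n-1}\times S^1$, $(a_1,\dots,a_n)\mapsto(a_1a_n^{-1},\dots,a_{n-1}a_n^{-1},a_n)$; under it the fibration $T$ becomes the product of the path fibration $K$ with the identity fibration on $BT^{n-1}$, so that $^TE\cong{^KE}\otimes\Zz[v_1',\dots,v_{n-1}']$ as multiplicative spectral sequences, and all three assertions follow at once. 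Your reverse comparison map $\Xi$, induced by the morphism of central extensions $(\mathrm{id},\,p_n,\,\mathrm{triv})$, is precisely one projection of this splitting (its existence is legitimate: the bar construction is functorial on morphisms of central extensions fixing the central $S^1$; alternatively, the square over the base commutes because $\chi_T\colon BPT^n\to K(\Zz,3)$ is null-homotopic, as $H^3(BPT^n;\Zz)\cong H^3(BT^{n-1};\Zz)=0$), while your permanent cocycles $v_j'$ record the complementary factor. So the two proofs run on the same engine, yours organized through naturality of the two comparison maps $\Phi^*$ and $\Xi^*$ rather than through an explicit product decomposition of the fibration; you correctly identified the construction of $\Xi^*$ as the crux, since $\Phi^*$ alone determines $^Td_r(v_n^i\xi)$ only modulo $(v_1',\dots,v_{n-1}')$.

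One step deserves to be made explicit, though it is routine rather than a gap. Naturality plus the Leibniz rule computes $^Td_r$ on the subalgebra of $^TE_r$ generated by $\im\Xi^*_r$ and the classes $v_j'$, but the reduction ``every class has the form $\sum k_{i,t}(v')^t\,\Xi^*_r(\cdot)$'' must be verified on every page, not just at $E_2$. This follows by induction on $r$: since $\Zz[v_1',\dots,v_{n-1}']$ is free over $\Zz$, passing to homology commutes with $-\otimes\Zz[v_1',\dots,v_{n-1}']$, so from $^TE_r\cong{^KE}_r\otimes\Zz[v_1',\dots,v_{n-1}']$ with $^Td_r={^Kd}_r\otimes\mathrm{id}$ one deduces the same statement at page $r+1$. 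In Gu's product formulation this bookkeeping is automatic; in yours it is a one-line addition. Your concluding degeneration argument --- the chain rule converting $\nabla$ into $\partial/\partial v_n$ in the coordinates $v_1',\dots,v_{n-1}',v_n$, hence $\ker{^Td_3^{0,*}}=\Zz[v_1',\dots,v_{n-1}']$, generated by permanent cocycles, with nothing mapping into the zeroth column --- is exactly right.
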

Since the operator $\nabla$ preserves symmetric polynomials, for $f\in H^*(BU(n);\Zz)\cong\Lambda_n$ and $\xi\in H^*(K(\Zz,3);\Zz)$, we have the following corollary of Proposition \ref{prop:T} by the Leibniz rule.
\begin{cor}[{\cite[Corollary 3.10]{Gu21}}]\label{cor:U^d_3}
	 $^Ud_3(f\xi)=\nabla(f)x_1\xi$.
\end{cor}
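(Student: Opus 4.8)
The plan is to deduce the formula for ${}^Ud_3$ from the already-established description of ${}^Td_3$ in Proposition \ref{prop:T}, by exploiting the naturality of the spectral-sequence map $\Psi^*\colon {}^UE_*^{*,*}\to {}^TE_*^{*,*}$ coming from the commutative diagram \eqref{diag:SS}. The central observation is that on fibers $\Psi^*$ is induced by $B\psi^*\colon H^*(BU(n))\to H^*(BT^n)$, which sends $c_i\mapsto \sigma_i(v_1,\dots,v_n)$ and therefore realizes the standard inclusion $\Lambda_n\hookrightarrow \Zz[v_1,\dots,v_n]$, while on the common base $K(\Zz,3)$ it is the identity (the right-hand column of \eqref{diag:SS} consists of identity maps). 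Hence $\Psi^*(f\xi)$ is simply $f\xi$ with $f$ read as a symmetric polynomial and $\xi\in H^*(K(\Zz,3))$ left unchanged.

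First I would reduce to the case $\xi=1$. Since $\xi$ is pulled back from the base, it lies on the bottom edge ${}^UE_2^{*,0}$, where all outgoing differentials vanish for degree reasons, so ${}^Ud_3(\xi)=0$; as $f$ has even total degree, the Leibniz rule then gives ${}^Ud_3(f\xi)={}^Ud_3(f)\cdot\xi$, and it remains to show ${}^Ud_3(f)=\nabla(f)x_1$ for a fiber class $f\in H^*(BU(n))\cong\Lambda_n$. Next I would apply naturality of the differentials: $\Psi^*\big({}^Ud_3(f)\big)={}^Td_3\big(\Psi^*(f)\big)$. The right-hand side is computed by Proposition \ref{prop:T}, which identifies ${}^Td_3^{0,*}$ with $\nabla(-)\cdot x_1$ on $H^*(BT^n)$; thus $\Psi^*\big({}^Ud_3(f)\big)=\nabla(\Psi^*f)\,x_1$. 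Because $\nabla$ on $\Lambda_n$ is by definition the restriction of the operator $\sum_i\partial/\partial v_i$ on $\Zz[v_1,\dots,v_n]$, we have $\nabla(\Psi^*f)=\Psi^*(\nabla f)$, and therefore $\Psi^*\big({}^Ud_3(f)\big)=\Psi^*\big(\nabla(f)x_1\big)$.

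The remaining point, and the only one that needs a little care, is the injectivity of $\Psi^*$ on the relevant groups, which is what lets me cancel $\Psi^*$ and conclude ${}^Ud_3(f)=\nabla(f)x_1$. Here I would use that $\Zz[v_1,\dots,v_n]$ is a free module of rank $n!$ over $\Lambda_n$, so the inclusion $\Lambda_n\hookrightarrow\Zz[v_1,\dots,v_n]$ is a split monomorphism of free abelian groups in each degree. Since the $E_2$ pages of both ${}^UE$ and ${}^TE$ are concentrated in even fiber-degrees, $d_2$ vanishes and $E_3=E_2$ in both; applying $H^s(K(\Zz,3);-)$ to the split injection of coefficient modules then shows that $\Psi^*$ is injective on the groups ${}^UE_3^{3,t-2}=H^3(K(\Zz,3);H^{t-2}(BU(n)))$ in which both ${}^Ud_3(f)$ and $\nabla(f)x_1$ live. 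This injectivity converts the established identity of images into the desired identity in ${}^UE$, and multiplying back by $\xi$ finishes the argument. I expect this verification of splitness and injectivity to be the only genuine subtlety; everything else is formal naturality together with the Leibniz rule.
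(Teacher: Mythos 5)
Your proof is correct and follows essentially the same route as the paper, which likewise derives the formula from Proposition \ref{prop:T} via the comparison map $\Psi^*$ of \eqref{diag:SS} together with the Leibniz rule and the fact that $\nabla$ restricts to $\Lambda_n$. The only detail you add beyond the paper's one-line derivation is an explicit justification of the injectivity of $\Psi^*$ on the relevant $E_3^{3,*}$ terms (via freeness of $\Zz[v_1,\dots,v_n]$ over $\Lambda_n$), which is exactly the point used implicitly in \cite[Corollary 3.10]{Gu21} and is argued correctly here.
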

In particular, the fact that $\nabla(\sigma_k)=\sum_{i=1}^n\frac{\partial\sigma_k}{\partial v_i}=(n-k+1)\sigma_{k-1}$ shows that for $c_k\in H^*(BU(n);\Zz)\cong {^U}E_3^{0,*}$, $1\leq k\leq n$, we have  (\cite[Corollary 3.4]{Gu21}) 
\begin{equation}\label{eq:nabla}
^Ud_3(c_k)=(n-k+1)c_{k-1}x_1.
\end{equation}

\begin{cor}\label{cor:E_4}
	$^UE_4^{0,*}\cong K_n$.
\end{cor}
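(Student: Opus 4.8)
The plan is to read off $^UE_4^{0,*}$ as the kernel of a single differential and then identify that kernel with $K_n$ directly from Corollary \ref{cor:U^d_3}. First I would pin down the starting column. Because $^UE_2^{*,*}$ is concentrated in even rows we already have $^UE_2^{*,*}={^UE}_3^{*,*}$, so $^UE_3^{0,*}=H^0(K(\Zz,3);H^*(BU(n)))$. As $H^0(K(\Zz,3);\Zz)\cong\Zz$, this column is canonically $H^*(BU(n);\Zz)=\Zz[c_1,\dots,c_n]$, which under the identification $c_i\leftrightarrow\sigma_i$ coming from $B\psi^*(c_i)=\sigma_i$ becomes the ring $\Lambda_n$ of symmetric polynomials.

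Next I would observe that the $s=0$ column receives no nonzero differentials: for every $r\geq 2$ the incoming map $d_r\colon{^UE}_r^{-r,*}\to{^UE}_r^{0,*}$ starts in a column of negative filtration degree and hence vanishes. Consequently $^UE_4^{0,*}=\ker\bigl({^Ud}_3^{0,*}\colon{^UE}_3^{0,*}\to{^UE}_3^{3,*}\bigr)$, with no quotient to form. The core computation is then immediate from Corollary \ref{cor:U^d_3}: taking $\xi=1$, for $f\in H^*(BU(n))\cong\Lambda_n$ we have $^Ud_3(f)=\nabla(f)x_1$. It remains only to check that multiplication by $x_1$ is injective on this column, so that $\nabla(f)x_1=0$ forces $\nabla(f)=0$. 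Since $H^*(BU(n);\Zz)$ is torsion-free, the universal coefficient theorem gives ${^UE}_2^{3,t}\cong H^3(K(\Zz,3);\Zz)\otimes H^t(BU(n);\Zz)$ with no Tor term, and by Theorem \ref{thm:cohomology of k(3)} the group $H^3(K(\Zz,3);\Zz)\cong\Zz$ is generated by $x_1$. Under this tensor decomposition $\nabla(f)x_1$ corresponds to $\nabla(f)$, so the map $f\mapsto\nabla(f)x_1$ has kernel exactly $\ker\nabla=K_n$.

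I do not expect a genuine obstacle here, since all the substantive work is already packaged in Corollary \ref{cor:U^d_3} and in the known structure of $H^*(K(\Zz,3);\Zz)$. The only points requiring care are the bookkeeping that the $s=0$ column is untouched by incoming differentials and the injectivity of multiplication by $x_1$; both rest on the torsion-freeness of $H^*(BU(n);\Zz)$ together with the identification $H^3(K(\Zz,3);\Zz)\cong\Zz\langle x_1\rangle$, after which the isomorphism $^UE_4^{0,*}\cong K_n$ follows by unwinding the definition of $K_n$ as $\ker\nabla$ on $\Lambda_n$.
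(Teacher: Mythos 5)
Your proposal is correct and follows essentially the same route as the paper: identify ${^U}E_3^{0,*}={^U}E_2^{0,*}\cong\Lambda_n$, note that ${^U}E_4^{0,*}=\ker{^U}d_3^{0,*}$, and apply Corollary \ref{cor:U^d_3} to conclude the kernel is $\ker\nabla=K_n$. The only difference is that you spell out two points the paper leaves implicit --- the vanishing of incoming differentials on the $s=0$ column and the injectivity of multiplication by $x_1$ via torsion-freeness of $H^*(BU(n);\Zz)$ and $H^3(K(\Zz,3);\Zz)\cong\Zz\{x_1\}$ --- both of which are accurate.
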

\begin{proof}
This comes from the fact that ${^U}E^{0,*}_3={^U}E^{0,*}_2=H^*(BU(n);\Zz)\cong \Lambda_n$, ${^U}E^{0,*}_4=\ker {^U}d_3^{0,*}$, where $^Ud_3^{0,*}=\nabla(-)\cdot x_1$ by Corollary \ref{cor:U^d_3}.
\end{proof}
 %Remember that $\Psi^*(c_i)=\sigma_i$ for the fibration map $\Psi: T\to U$.

\section{Proof of Theorem \ref{thm:K_4}}\label{sec:proof K_4}
The proof of Theorem \ref{thm:K_4} is essentially a $K_4$ analogue of the proof of Lemma 3.2 for $K_3$ in \cite{Vezz00} (also Theorem 14.2 in \cite{Vis07}).

Firstly, we give the explicit expression of $\alpha_i$ in $\Lambda_4^i$. Let \begin{equation}\label{eq:alpha}
	\begin{split}
		\alpha_2&=8\sigma_2-3\sigma_1^2,\\
		\alpha_3&=8\sigma_3-4\sigma_1\sigma_2+\sigma_1^3,\\
		\alpha_4&=4\sigma_4-\sigma_1\sigma_3+43\sigma_2^2-32\sigma_1^2\sigma_2+6\sigma_1^4,\\
		\alpha_6&=27\sigma_1^2\sigma_4+27\sigma_3^2-9\sigma_1\sigma_2\sigma_3-72\sigma_2\sigma_4+2\sigma_2^3.
	\end{split}
\end{equation}
A direct calculation shows that $\nabla(\alpha_i)=0$ for all $\alpha_i$ above, and \[64\alpha_6=97\alpha_2^3+27\alpha_3^2-144\alpha_2\alpha_4.\]
%So the ring defined in Theorem \ref{thm:K_4} is a subring of $K_4$. 

Consider the ring homomorphism 
\[\ker\nabla\cong\Zz[v_1-v_4,v_2-v_4,v_{3}-v_4]\hookrightarrow\Zz[v_1,\dots,v_4]\to\Zz[v_1,\dots,v_4]/(\sigma_1).\]
It is easy to see that this map is injective. After tensoring with $\Zz[\frac{1}{2}]$, this map becomes an isomorphism, and the inverse is obtained by sending $v_i$ to $v_i-\frac{\sigma_1}{4}$. The commutative diagram below shows that $K_4\otimes\Zz[\frac{1}{2}]\hookrightarrow\Zz[\frac{1}{2}][\sigma_2,\sigma_3,\sigma_4]$.  
\[
\xymatrix{
	K_4\otimes\Zz[\frac{1}{2}]\ar[r]\ar@{^{(}->}[d]&\Zz[\frac{1}{2}][\sigma_2,\sigma_3,\sigma_4]\ar@{^{(}->}[d]\\
	\ker\nabla\otimes\Zz[\frac{1}{2}]\ar@{=}[r]&\Zz[\frac{1}{2}][v_1,\dots,v_4]/(\sigma_1)}
\]
On the other hand, the inclusion $\Zz[\frac{1}{2}][\alpha_2,\alpha_3,\alpha_4]\hookrightarrow \Zz[\frac{1}{2}][\sigma_2,\sigma_3,\sigma_4]$ is an isomorphism, so in fact we have 
\[\Zz[\frac{1}{2}][\alpha_2,\alpha_3,\alpha_4]\cong K_4\otimes\Zz[\frac{1}{2}]\cong\Zz[\frac{1}{2}][\sigma_2,\sigma_3,\sigma_4].\]
Hence if $f\in K_4$ then $2^sf\in \Zz[\alpha_2,\alpha_3,\alpha_4]$ for sufficient large $s$, and  the following lemma shows that
$\alpha_2$, $\alpha_3$, $\alpha_4$, $\alpha_6$ generate $K_4$ by descending induction on $s$.
\begin{lem}
	For $f\in K_4$, if $2f\in \Zz[\alpha_2,\alpha_3,\alpha_4,\alpha_6]$, then $f\in\Zz[\alpha_2,\alpha_3,\alpha_4,\alpha_6]$ too.
\end{lem}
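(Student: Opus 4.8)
The plan is to reduce the statement to an injectivity claim modulo $2$ and then settle that claim by a transcendence-degree computation. Write $A=\Zz[\alpha_2,\alpha_3,\alpha_4,\alpha_6]\subseteq K_4\subseteq\Zz[v_1,\dots,v_4]$ for the subring generated by the four classes. The key reformulation is that it suffices to prove
\[A\cap 2\Zz[v_1,\dots,v_4]=2A,\]
i.e. that reduction mod $2$ gives an injection $A/2A\hookrightarrow \Zz/2[v_1,\dots,v_4]$. Granting this, if $f\in K_4$ and $2f\in A$, then $2f\in A\cap 2\Zz[v_1,\dots,v_4]=2A$, so $2f=2a$ with $a\in A$; since $\Zz[v_1,\dots,v_4]$ is a domain, $f=a\in A$. (In fact this only uses $f\in\Zz[v_1,\dots,v_4]$, so the hypothesis $f\in K_4$ is not strictly needed.)

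To run the mod $2$ computation I would first record, from the relation $64\alpha_6=\alpha_2^3+27\alpha_3^2-48\alpha_2\alpha_4$, the identity
\[\alpha_2^3+\alpha_3^2=2\,(32\alpha_6-13\alpha_3^2+24\alpha_2\alpha_4)\in 2A,\]
so that $\alpha_2^3+\alpha_3^2$ is already divisible by $2$ inside $A$. Reducing the explicit formulas \eqref{eq:alpha} modulo $2$ gives the images
\[\bar\alpha_2=\sigma_1^2,\quad \bar\alpha_3=\sigma_1^3,\quad \bar\alpha_4=\sigma_2^2+\sigma_1\sigma_3,\quad \bar\alpha_6=\sigma_1^2\sigma_4+\sigma_3^2+\sigma_1\sigma_2\sigma_3\]
in $\Zz/2[v_1,\dots,v_4]$, all lying in $\Zz/2[\sigma_1,\sigma_2,\sigma_3,\sigma_4]$. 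I would then introduce the $\Zz/2$-algebra map $\psi\colon \Zz/2[t_2,t_3,t_4,t_6]\to\Zz/2[v_1,\dots,v_4]$, $t_i\mapsto\bar\alpha_i$, and prove that $\ker\psi=(t_2^3+t_3^2)$. The inclusion $\supseteq$ is immediate, since $\bar\alpha_2^3+\bar\alpha_3^2=\sigma_1^6+\sigma_1^6=0$.

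For the reverse inclusion I would argue by dimension. The polynomial $t_2^3+t_3^2$ is irreducible over $\Zz/2$ (the cuspidal-cubic relation), so $\Zz/2[t_2,t_3,t_4,t_6]/(t_2^3+t_3^2)$ is a domain of Krull dimension $3$ through which $\psi$ factors; a nonzero kernel in this finitely generated domain over a field would force the image dimension below $3$. Hence it is enough to show that $\bar\alpha_2,\bar\alpha_3,\bar\alpha_4,\bar\alpha_6$ have transcendence degree $3$ over $\Zz/2$. Since $\sigma_1=\bar\alpha_3/\bar\alpha_2$ lies in their fraction field, this reduces to the algebraic independence of $\sigma_1,\bar\alpha_4,\bar\alpha_6$, which I would verify by the Jacobian criterion: in characteristic $2$ the Jacobian of $(\sigma_1,\bar\alpha_4,\bar\alpha_6)$ with respect to $(\sigma_1,\sigma_2,\sigma_3,\sigma_4)$ has the $3\times3$ minor on the columns $\sigma_1,\sigma_3,\sigma_4$ equal to $\sigma_1^3\neq 0$, so its rank is $3$.

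Finally I would assemble everything. Given $f\in K_4$ with $2f\in A$, write $2f=G(\alpha_2,\alpha_3,\alpha_4,\alpha_6)$ with $G\in\Zz[t_2,t_3,t_4,t_6]$. As $2f\in 2\Zz[v_1,\dots,v_4]$ its reduction vanishes, so $\psi(\bar G)=0$ and $\bar G\in\ker\psi=(t_2^3+t_3^2)$; lifting a factorization $\bar G=(t_2^3+t_3^2)H$ to $\Zz[t]$ gives $G=(t_2^3+t_3^2)\tilde H+2G'$. Evaluating at the $\alpha_i$ and using $\alpha_2^3+\alpha_3^2\in 2A$ shows $2f=G(\alpha)\in 2A$, and dividing by $2$ in the domain $\Zz[v_1,\dots,v_4]$ yields $f\in A$. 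The only genuine obstacle is the identity $\ker\psi=(t_2^3+t_3^2)$ — that is, ruling out any further relation among the $\bar\alpha_i$ modulo $2$ — and I expect the transcendence-degree (Jacobian) step to be the crux, since that is exactly where the precise shape of the classes $\alpha_i$ is used; the rest is formal.
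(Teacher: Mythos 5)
Your proposal is correct and follows essentially the same route as the paper's proof: reduce the $\alpha_i$ modulo $2$, identify the ideal of relations among the images $\sigma_1^2$, $\sigma_1^3$, $\sigma_1\sigma_3+\sigma_2^2$, $\sigma_1^2\sigma_4+\sigma_3^2+\sigma_1\sigma_2\sigma_3$ as the principal ideal $(t_2^3+t_3^2)$, lift, and divide by $2$ using $\alpha_2^3+\alpha_3^2\in 2\Zz[\alpha_2,\alpha_3,\alpha_4,\alpha_6]$. The only real difference is that the paper simply asserts the relation-ideal claim, while you supply a proof of it via the Krull-dimension and Jacobian argument (correctly invoking only the one-way implication of the Jacobian criterion that remains valid over the perfect field $\Zz/2$), so your write-up is if anything more complete on the one step the paper leaves unjustified.
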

\begin{proof}
	The images of $\alpha_2$, $\alpha_3$, $\alpha_4$, $\alpha_6$ in $\Zz/2[\sigma_1,\dots,\sigma_4]$ is $\sigma_1^2$, $\sigma_1^3$, $\sigma_1\sigma_3+\sigma_2^2$, $\sigma_1^2\sigma_4+\sigma_3^2+\sigma_1\sigma_2\sigma_3$ respectively. The ideal of relations between these four polynomials
	is generated by $(\sigma_1^2)^3+(\sigma_1^3)^2$, which is the image of $97\alpha_2^3+27\alpha_3^2-144\alpha_2\alpha_4=64\alpha_6$. Hence if we write $2f=p(\alpha_2,\alpha_3,\alpha_4,\alpha_6)$ for some integral polynomial $p$, then there are two integral polynomials $q$ and $r$ such that
	\[p(\alpha_2,\alpha_3,\alpha_4,\alpha_6)=2q(\alpha_2,\alpha_3,\alpha_4,\alpha_6)+64\alpha_6\cdot r(\alpha_2,\alpha_3,\alpha_4,\alpha_6).\]
	Dividing by $2$ we get $f\in \Zz[\alpha_2,\alpha_3,\alpha_4,\alpha_6]$.
\end{proof}

Consider the surjective ring homomorphism $\Zz[y_2,y_3,y_4,y_6]\to K_4$ that sends $y_i$ to $\alpha_i$. Let $I$ be its kernel. Then the following lemma is an equivalent statement of Theorem \ref{thm:K_4}.
\begin{lem}
	In the above notation, $I=(64y_6-97y_2^3-27y_3^2+144y_2y_4)$.
\end{lem}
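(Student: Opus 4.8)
The plan is to prove that the kernel of the surjection $\phi\colon B:=\Zz[y_2,y_3,y_4,y_6]\to K_4$, $y_i\mapsto\alpha_i$, equals the principal ideal generated by $r:=64y_6-y_2^3-27y_3^2+48y_2y_4$. Since the relation $64\alpha_6=\alpha_2^3+27\alpha_3^2-48\alpha_2\alpha_4$ already recorded shows $\phi(r)=0$, we have $(r)\subseteq I$, and the whole task is the reverse inclusion; equivalently, the induced surjection $\bar\phi\colon B/(r)\to K_4$ must be shown to be injective. I would establish $\ker\bar\phi=I/(r)=0$ by separately analyzing the behaviour after inverting $2$ and at the prime $2$.

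Away from $2$ the statement is formal. Because $64$ is a unit in $\Zz[1/2]$, the relation $r$ solves for $y_6$, so $(B/(r))\otimes\Zz[1/2]\cong\Zz[1/2][y_2,y_3,y_4]$; under $\bar\phi$ this maps to $K_4\otimes\Zz[1/2]\cong\Zz[1/2][\alpha_2,\alpha_3,\alpha_4]$, the polynomial ring already identified earlier in this section. As $\alpha_2,\alpha_3,\alpha_4$ are algebraically independent, $\bar\phi\otimes\Zz[1/2]$ is an isomorphism. Since localization is exact, $(I/(r))\otimes\Zz[1/2]=0$, so every element of $I/(r)$ is annihilated by some power of $2$.

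It then remains to see there is no $2$-torsion. Here I would argue that $B/(r)$ is itself $2$-torsion-free: $B$ is a UFD, $2$ is a prime of $B$, and $r$ is primitive, since its reduction modulo $2$ is $y_2^3+y_3^2\neq0$ (equivalently $\gcd(64,1,27,48)=1$), so $2\nmid r$. Thus $2g\in(r)$, written $2g=rh$, forces $2\mid h$ and hence $g\in(r)$. Consequently the submodule $I/(r)\subseteq B/(r)$ is $2$-torsion-free. A module that is simultaneously $2$-power-torsion (previous step) and $2$-torsion-free must vanish, so $I/(r)=0$ and $I=(r)$, as claimed.

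The essential point, and the only place requiring care, is the prime $2$: everything would collapse if $r$ were even, since then $I$ would acquire further generators and the statement would be false. The primitivity of $r$, guaranteed by the $-y_2^3$ term, is exactly what rescues the argument, and verifying the coefficient bookkeeping that makes $r$ primitive is the one computation I would double-check. This is the $K_4$ analogue of the role played by the prime $3$ and the primitivity of $27y_6-4y_2^3-y_3^2$ in Vezzosi's treatment of $K_3$.
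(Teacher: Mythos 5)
Your proof is correct and follows essentially the same route as the paper's: both invert $2$ to identify $B/(r)\otimes\Zz[\frac{1}{2}]$ with $K_4\otimes\Zz[\frac{1}{2}]\cong\Zz[\frac{1}{2}][\alpha_2,\alpha_3,\alpha_4]$, concluding that any $f\in I$ has $2^sf\in(r)$, and then use that $2$ does not divide $r$ in the UFD $\Zz[y_2,y_3,y_4,y_6]$ to descend to $f\in(r)$. Your module-theoretic packaging (showing $I/(r)$ is simultaneously $2$-power torsion and $2$-torsion-free) is just a cleaner phrasing of the paper's argument, and your emphasis on the primitivity of $r$ mod $2$ correctly identifies the crux.
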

\begin{proof}
	Let $\yy=64y_6-97y_2^3-27y_3^2+144y_2y_4$. We know that $\yy\in I$. Note that  \[\Zz[\frac{1}{2}][y_2,y_3,y_4,y_6]/(\yy)=\Zz[\frac{1}{2}][y_2,y_3,y_4],\] 
	which is isomorphic to $K_4\otimes\Zz[\frac{1}{2}]$ as we saw above. This implies that if $f\in I$, then some multiple $2^rf$ is in $(\yy)$. However, since $2$ dose not divide $\yy$ in the UFD $\Zz[y_2,y_3,y_4,y_6]$, $f$ is actually in $(\yy)$, and we get the desired equality in the lemma.
\end{proof}

We end this section with an algebraic result, which will be used in Section \ref{sec:^UE}.
\begin{prop}\label{prop:order}
	$\Zz/4[\alpha_4,\alpha_6]$ is a subring of $\coker\nabla$, where $\nabla$ acts on $\Lambda_4$.
\end{prop}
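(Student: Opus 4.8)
The plan is to recast the statement as the computation of the image of the ring $K_4=\ker\nabla$ inside $\coker\nabla$, and then to isolate its genuinely $2$-primary content.

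First I would record the elementary structural facts. By the Leibniz rule, if $g=\nabla(G)$ and $h\in K_4$ then $gh=\nabla(hG)$, so $\im\nabla$ is a $K_4$-submodule of $\Lambda_4$; hence $J:=K_4\cap\im\nabla$ is an ideal of $K_4$ and the image of $K_4$ in $\coker\nabla$ is the ring $K_4/J$. Since $\nabla(\sigma_1)=4$ while $\nabla(h)=0$ for $h\in K_4$, the identity $4h=\nabla(\sigma_1 h)$ shows $4K_4\subseteq\im\nabla$; in particular $K_4/J$ is annihilated by $4$, so it is a $2$-group and the whole question is $2$-primary. A short search for preimages also gives $\alpha_2,\alpha_3\in\im\nabla$ (for instance $\alpha_2=\nabla(6\sigma_3-\sigma_1\sigma_2)$), so $\bar\alpha_2=\bar\alpha_3=0$. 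Therefore $(4,\alpha_2,\alpha_3)K_4\subseteq J$, and using the presentation of $K_4$ in Theorem \ref{thm:K_4} the relation $64\alpha_6=\alpha_2^3+27\alpha_3^2-48\alpha_2\alpha_4$ becomes $0$ modulo $(4,\alpha_2,\alpha_3)$, so that $K_4/(4,\alpha_2,\alpha_3)\cong\Zz/4[\alpha_4,\alpha_6]$. Thus the proposition is equivalent to the single reverse inclusion $J\subseteq(4,\alpha_2,\alpha_3)K_4$, i.e. to $\Zz[\alpha_4,\alpha_6]\cap\im\nabla=4\,\Zz[\alpha_4,\alpha_6]$, i.e. to the injectivity of the free $\Zz/4$-module map $\Zz/4[\alpha_4,\alpha_6]\to\coker\nabla$.

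To detect this I would first reduce modulo $2$. As tensoring is right exact, $\coker\nabla\otimes\Zz/2\cong\coker\nabla_{\Zz/2}$, and modulo $2$ the derivation squares to zero ($\nabla\sigma_1=\nabla\sigma_3=0$, $\nabla\sigma_2=\sigma_1$, $\nabla\sigma_4=\sigma_3$). Hence $(\Lambda_4\otimes\Zz/2,\nabla_{\Zz/2})$ is the tensor product of the differential graded algebras $(\Zz/2[\sigma_1,\sigma_2],\nabla)$ and $(\Zz/2[\sigma_3,\sigma_4],\nabla)$, and a Künneth computation identifies its homology as $\Zz/2[\overline{\sigma_2^2},\overline{\sigma_4^2}]$. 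Since $\sigma_1\sigma_3=\nabla(\sigma_2\sigma_3)$ is a boundary, $\bar\alpha_4=\overline{\sigma_2^2}$, so the powers $\bar\alpha_4^{a}$ are $\Zz/2$-independent in $\coker\nabla\otimes\Zz/2$; this shows that in any element of $\Zz[\alpha_4,\alpha_6]\cap\im\nabla$ the coefficients of the pure powers $\alpha_4^a$ must be even.

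The essential difficulty — and the step I expect to be the main obstacle — is that $\bar\alpha_6=0$ in this homology: one checks $\alpha_6\in\im\nabla+2\Lambda_4$ (consistent with $\Theta_4(\alpha_6)=2\eta^6$), so $\bar\alpha_6$ is $2$-divisible in $\coker\nabla$, say $\bar\alpha_6=2w$, and the mod $2$ reduction is therefore blind to every monomial $\alpha_4^a\alpha_6^b$ with $b\geq1$. Because the order of $2$-divisibility only grows as one multiplies by $\alpha_6$, no single reduction modulo a fixed power of $2$ is guaranteed to see all these monomials at their full order $4$. To finish, I would analyze the $2$-primary group $\coker\nabla$ in the degrees $4a+6b$ through its $2$-adic (Bockstein) filtration, using the clean form $\nabla=\partial/\partial v_4$ in the coordinates $v_1-v_4,\dots,v_3-v_4,v_4$ of Proposition \ref{prop:T} to control $\im\nabla$, and show that $\alpha_4^a\alpha_6^b$ has $2$-divisibility exactly $b$ with independent leading classes in the associated graded. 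Within a fixed degree the exponent $b$ determines $a$, so these exact valuations separate the distinct monomials and yield the desired $\Zz/4$-independence; carrying out this $2$-local computation uniformly in all degrees, in particular pinning down the exact $2$-divisibility of $\alpha_6$ and its powers, is where the real work lies.
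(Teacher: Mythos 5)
Your preparatory reductions are sound: $\im\nabla$ is indeed a $K_4$-submodule of $\Lambda_4$, the identities $4f=\nabla(\sigma_1 f)$ and $\alpha_2=\nabla(6\sigma_3-\sigma_1\sigma_2)$ check out (and $\alpha_3\in\im\nabla$ likewise), and combined with Theorem \ref{thm:K_4} this correctly reduces the proposition to the equality $\Zz[\alpha_4,\alpha_6]\cap\im\nabla=4\,\Zz[\alpha_4,\alpha_6]$. Your K\"unneth computation of the homology of $(\Lambda_4\otimes\Zz/2,\nabla_{\Zz/2})$ is also correct. But the argument then stops exactly where the content of the proposition lies, and you say so yourself. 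Concretely: the mod $2$ step only shows that in an element of $\Zz[\alpha_4,\alpha_6]\cap\im\nabla$ the coefficients of the pure powers $\alpha_4^a$ are even; since $\w\alpha_6$ is a boundary mod $2$, it sees nothing of any monomial $\alpha_4^a\alpha_6^b$ with $b\geq1$, and it gives no information mod $4$ for any coefficient. Note also that the reduction cannot be iterated: knowing $f\in\im\nabla$ has all visible coefficients even does not place $f/2$ in $\im\nabla$. The proposed remedy --- that $\alpha_4^a\alpha_6^b$ has $2$-divisibility exactly $b$ in $\coker\nabla$ with independent leading classes in the $2$-adic associated graded --- is asserted, not proved; it is in fact stronger than the proposition itself (it forces $\coker\nabla$ to contain elements of order $2^{b+2}$ for all $b$), and no mechanism is offered for establishing it. This is a genuine gap, not a verification left to the reader.

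The missing idea, which is the paper's short trick, is to bypass the $2$-adic structure of the cokernel entirely by transporting the hypothesis into the kernel, where everything is known. If $f\in\Zz[\alpha_4,\alpha_6]$ with $\w f\neq0$ had order less than $4$, write $\nabla(g)=2f$; then $\nabla(\sigma_1 f-2g)=4f-4f=0$, so $\sigma_1 f-2g\in K_4$, and reducing mod $2$ yields $\sigma_1\w f\in K_4\otimes\Zz/2$, the subring of the UFD $\Zz/2[\sigma_1,\dots,\sigma_4]$ generated by $\w\alpha_2=\sigma_1^2$, $\w\alpha_3=\sigma_1^3$, $\w\alpha_4=\sigma_1\sigma_3+\sigma_2^2$, $\w\alpha_6=\sigma_1^2\sigma_4+\sigma_3^2+\sigma_1\sigma_2\sigma_3$. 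Unlike in your homology, $\w\alpha_6$ survives in this ring, and a leading-monomial comparison gives the contradiction: the relevant leading term of $\sigma_1\w f$ has odd degree, hence as a monomial in the $\w\alpha_i$ it must involve $\w\alpha_3=\sigma_1^3$, whereas $\sigma_1^3$ cannot divide $\sigma_1\w\alpha_4^i\w\alpha_6^j$ because $\sigma_1$ is prime and divides neither $\w\alpha_4$ nor $\w\alpha_6$. Since ``order dividing $1$'' implies ``order dividing $2$,'' this single argument handles both levels at once and yields order exactly $4$, after which the $\Zz/4$-independence follows by extracting powers of $2$ as in your own first paragraph. If you wish to salvage your route, the thing to look for is an analogue of this multiplication-by-$\sigma_1$ device converting $2f\in\im\nabla$ into a statement inside $K_4\otimes\Zz/2$, rather than attempting to resolve the Bockstein filtration of $\coker\nabla$.
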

\begin{proof}
	Since $\alpha_4$, $\alpha_6\in K_4$ and $\nabla(\sigma_1)=4$, $\nabla(\sigma_1f)=4f$ for any $f\in\Zz[\alpha_4,\alpha_6]$. So the order of $f$ in $\coker\nabla$ divides $4$. If its order is not $4$, there would exist $g\in\Lambda_4$ such that $\nabla(g)=2f$, and then $\sigma_1f-2g\in K_4$. Let $\rho:\Lambda_4\to\Lambda_4\otimes\Zz/2$ be the mod $2$ map, and write $\w h=\rho(h)$ for an element $h\in\Lambda_4$. Then $\rho(\sigma_1f-2g)=\sigma_1\w f\in K_4\otimes\Zz/2=\Zz/2[\w\alpha_2,\w\alpha_3,\w\alpha_4,\w\alpha_6]$, where $\w\alpha_2=\sigma_1^2$, $\w\alpha_3=\sigma_1^3$, $\w\alpha_4=\sigma_1\sigma_3+\sigma_2^2$, $\w\alpha_6=\sigma_1^2\sigma_4+\sigma_3^2+\sigma_1\sigma_2\sigma_3$. This means that there is a polynomial $p$ over $\Zz/2$  such that 
	$\sigma_1\w f=p(\w\alpha_2,\w\alpha_3,\w\alpha_4,\w\alpha_6)$ in $\Zz/2[\sigma_1,\dots,\sigma_4]$. We may assume that $\w f$ and $p$ are homogeneous polynomials in $\Zz/2[\sigma_1,\dots,\sigma_4]$, so that $\deg(p)=\deg(\w f)+1$, an odd number, which implies that $\w\alpha_3$ divides $p$. However, since $\Zz/2[\sigma_1,\dots,\sigma_4]$ is a UFD and $\sigma_1$ dose not divide $\w\alpha_4$ and $\w\alpha_6$, $\w\alpha_3=\sigma_1^3$ can not divide $\sigma_1\w f$. This gives a contradiciton. So the order of $f$ must be $4$, and the proposition follows immediately.
\end{proof}

\section{On the homomorphism from $H^*(K(\Zz,3))$ to $H^*(BPU(2))$}\label{sec:hom}
In this section, we study the homomorphism $\chi^*:H^*(K(\Zz,3))\to H^*(BPU(2))$ induced by the fibration map $\chi:BPU(2)\to K(\Zz,3)$ defined in \eqref{eq:map chi}. Throughout this section, let $\rho:H^*(-;\Zz)\to H^*(-;\Zz/2)$ denote the mod $2$ reduction map on cohomology.
%$\chi^*$ and $\chi^*_2$ be the cohomological homomorphisms induced by $\pi$, with coefficients in $\Zz$ and $\Zz/2$ respectively, and let 

Since $PU(2)=SO(3)$, the following result is a special case of \cite{Brow82}.
\begin{thm}\label{thm:SO3}
	Let $w_i\in H^i(BSO(3);\Zz/2)$ be the $i$th universal Stiefel-Whitney class of $BSO(3)$, and $p_1\in H^4(BSO(3);\Zz)$ the first Pontryagin class. Then 
	\[\begin{split}
	&H^*(BSO(3);\Zz/2)=\Zz/2[w_2,w_3],\ w_3=Sq^1(w_2), \text{ and }\\
	&H^*(BSO(3);\Zz)=\Zz[p_1,W_3]/(2W_3),\ \rho(p_1)=w_2^2,\ \rho(W_3)=w_3. 
 	\end{split}\] 
Here $W_3$ is the degree-$3$ integral Stiefel-Whitney class.
\end{thm}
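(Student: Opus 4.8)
The plan is to compute the mod $2$ cohomology first and then bootstrap to the integral statement via the mod $2$ Bockstein spectral sequence. For the mod $2$ part I would take as input the classical theorem of Borel that $H^*(BSO(n);\Zz/2)\cong\Zz/2[w_2,\dots,w_n]$, which for $n=3$ gives $H^*(BSO(3);\Zz/2)\cong\Zz/2[w_2,w_3]$; alternatively one derives this from the double cover $BSO(3)\to BO(3)$ classified by $w_1$, which kills $w_1$ in $H^*(BO(3);\Zz/2)=\Zz/2[w_1,w_2,w_3]$. The relation $w_3=Sq^1(w_2)$ is then immediate from the Wu formula $Sq^1(w_2)=w_1w_2+w_3$ together with $w_1=0$ in $H^*(BSO(3);\Zz/2)$.

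For the integral ring I would run the Bockstein spectral sequence, whose first differential $d_1$ is $Sq^1$ and which converges to the free part of $H^*(BSO(3);\Zz)$ reduced mod $2$. On $\Zz/2[w_2,w_3]$ the derivation $d_1$ satisfies $d_1(w_2)=w_3$ and $d_1(w_3)=Sq^1Sq^1(w_2)=0$, and a direct computation on the monomial basis $w_2^aw_3^b$ (where $d_1(w_2^aw_3^b)=a\,w_2^{a-1}w_3^{b+1}$) gives $E_2\cong\Zz/2[w_2^2]$. Since $w_2^2=\rho(p_1)$ is the reduction of the rationally nonzero Pontryagin class $p_1$, the class $w_2^2$ and all its powers are permanent cycles, so $E_2=E_\infty$ and no higher differentials occur. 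I conclude that the free part of $H^*(BSO(3);\Zz)$ is the polynomial ring $\Zz[p_1]$ with $\rho(p_1)=w_2^2$, and—because only $d_1$ is nonzero—every torsion class has order exactly $2$.

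To assemble the ring structure I set $W_3=\beta(w_2)$, the integral Bockstein of $w_2$ (the degree-$3$ integral Stiefel–Whitney class); then $\rho(W_3)=\rho\beta(w_2)=Sq^1(w_2)=w_3$ and $2W_3=0$. These data define a ring homomorphism $\Phi\colon\Zz[p_1,W_3]/(2W_3)\to H^*(BSO(3);\Zz)$, and it remains to show $\Phi$ is an isomorphism. Surjectivity onto the free part is clear; for the torsion, I would use that $\rho$ is injective on the order-$2$ torsion subgroup $T$, so that $\rho(T)=\im(Sq^1)\subset\Zz/2[w_2,w_3]$, which is exactly the span of the monomials $w_2^{2j}w_3^{k}$ with $k\geq1$. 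As these are precisely the reductions $\rho(p_1^jW_3^k)$, the classes $p_1^jW_3^k$ ($k\geq1$) span $T$ and $\Phi$ is onto. Injectivity then follows by a degreewise count: the universal-coefficient relation $\dim_{\Zz/2}H^n(-;\Zz/2)=a_n+b_n+b_{n+1}$ (with $a,b$ the free ranks and torsion counts) shows the total mod $2$ Poincaré series of the source equals $\frac{1+t^2}{(1-t^3)(1-t^4)}=\frac{1}{(1-t^2)(1-t^3)}$, matching $H^*(BSO(3);\Zz/2)$, and a surjection between the two sides with equal ranks and equal torsion counts in each degree is forced to be an isomorphism.

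I expect the genuine work to lie in this final step: the mod $2$ computation and the additive integral structure are routine, but promoting the additive Bockstein data to the multiplicative assertion that the relation ideal is precisely $(2W_3)$—with no hidden multiplicative extensions and no further relations among $p_1$ and $W_3$—is the main obstacle. The Poincaré-series bookkeeping together with the injectivity of $\rho$ on the order-$2$ torsion is what closes this gap.
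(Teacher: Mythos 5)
Your proposal is essentially correct, but it is a genuinely different route from the paper: the paper offers no proof at all, quoting the statement as a special case of Brown's computation of $H^*(BSO(n);\Zz)$ \cite{Brow82} via the identification $PU(2)=SO(3)$. Your argument replaces that citation with a self-contained derivation: Borel's theorem plus the Wu formula for the mod $2$ ring, the Bockstein spectral sequence (with $d_1=Sq^1$, $E_2=\Zz/2[w_2^2]$, and $w_2^2=\rho(p_1)$ a permanent cycle) for the additive integral structure and the order-$2$ statement, and then the identifications $T=\im\beta$, $\rho(T)=\im Sq^1$, together with the Poincar\'e-series count $\frac{1+t^2}{(1-t^3)(1-t^4)}=\frac{1}{(1-t^2)(1-t^3)}$ to pin down the ring. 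All of these steps check out, including the subtler ones you flagged: $\rho$ is indeed injective on $T$ (if $x=2y$ with $x$ of order $2$, then $y$ is torsion of order dividing $4$, hence $2y=0$), and a graded surjection with equal free ranks and equal torsion counts in each degree is an isomorphism. What the citation buys the paper is brevity; what your route buys is a transparent proof whose mechanism (Bockstein data promoted to multiplicative structure) mirrors the techniques used elsewhere in the paper, e.g.\ in Lemma \ref{lem:image rho}.

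One soft spot: ``surjectivity onto the free part is clear'' is not quite clear as stated, and your counting argument does not repair it, since a surjection is an input to that argument, not an output. The Bockstein spectral sequence only controls $2$-divisibility: from $E_\infty=\Zz/2[\rho(p_1)]$ you may conclude that $p_1^k$ generates the degree-$4k$ free quotient up to an \emph{odd} multiple, but nothing so far rules out $p_1^k=mz$ with $m$ odd, $|m|>1$, in which case $\Phi$ would fail to be onto (note an injection of equal rank and torsion counts need not be an isomorphism, as $2\Zz\subset\Zz$ shows). To close this you need an odd-primary input, e.g.\ that the double cover $SU(2)\to SO(3)$ induces an isomorphism $H^*(BSO(3);\Zz[\tfrac12])\cong H^*(BSU(2);\Zz[\tfrac12])=\Zz[\tfrac12][c_2]$ under which $p_1\mapsto -4c_2$ is a unit multiple, so the index $m$ is a power of $2$; combined with the oddness from the Bockstein count this forces $m=\pm1$. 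With that one line added, your proof is complete.
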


\begin{thm}\label{thm:hom mod 2}
	In the notation of Proposition \ref{prop:mod 2 cohomology of k(3)}, the mod $2$ cohomology homomorphism $\chi^*:H^*(K(\Zz,3);\Zz/2)\to H^*(BPU(2);\Zz/2)$ satisfies
	\begin{gather*}
		\chi^*(x_1)=w_3,\quad \chi^*(x_{2,0})=w_2w_3,\\ \chi^*(x_{2,k})\equiv w_2^{2^{k+1}-1}w_3\mod\,(w_3^3),\ \text{ for }k\geq1.
	\end{gather*}
\end{thm}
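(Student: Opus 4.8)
The plan is to pin down $\chi^*(x_1)$ first and then harvest all the classes $\chi^*(x_{2,k})$ from naturality of the Steenrod squares, reducing the entire statement to the computation of one long composite of squares inside $H^*(BSO(3);\Zz/2)=\Zz/2[w_2,w_3]$.

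First I would identify $\chi^*(x_1)$. The generator $x_1\in H^3(K(\Zz,3);\Zz/2)$ is the mod $2$ reduction of the fundamental class $\iota_3$, and by construction $\chi$ is the classifying map of the $K(\Zz,2)$-bundle $\pi$ in \eqref{eq:map chi}; hence $\chi^*(\iota_3)\in H^3(BPU(2);\Zz)$ is the canonical degree-$3$ class classifying that fibration (the obstruction to lifting the structure group from $PU(2)$ to $U(2)$), which for $PU(2)=SO(3)$ is the integral Stiefel--Whitney class $W_3$. Theorem \ref{thm:SO3} gives $\rho(W_3)=w_3$, so $\chi^*(x_1)=\rho(\chi^*\iota_3)=w_3$.

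Next, Proposition \ref{prop:mod 2 cohomology of k(3)} writes $x_{2,k}=Sq^{2^{k+1}}Sq^{2^k}\cdots Sq^2(x_1)$, so naturality of the squares immediately gives $\chi^*(x_{2,k})=Sq^{2^{k+1}}Sq^{2^k}\cdots Sq^2(w_3)$. For $k=0$ this is $Sq^2(w_3)$, and since all Stiefel--Whitney classes of degree $>3$ vanish on $BSO(3)$, the Wu formula yields $Sq^2(w_3)=w_2w_3$ exactly, settling the first two identities. For $k\ge 1$ I would argue modulo $(w_3^3)$. The crucial structural point is that $(w_3^3)$ is closed under the Steenrod algebra: from $Sq(w_3)=w_3+w_2w_3+w_3^2=w_3(1+w_2+w_3)$ one gets $Sq(w_3^3)=w_3^3(1+w_2+w_3)^3\in(w_3^3)$, so by the Cartan formula every $Sq^i$ preserves $(w_3^3)$ and the operations descend to $\Zz/2[w_2,w_3]/(w_3^3)$. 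Writing $z_k:=\chi^*(x_{2,k})=Sq^{2^{k+1}}(z_{k-1})$, this legitimizes the inductive reduction $z_k\equiv Sq^{2^{k+1}}(w_2^{2^k-1}w_3)\pmod{w_3^3}$ once $z_{k-1}\equiv w_2^{2^k-1}w_3$ is known. A parity count then collapses the target: a monomial $w_2^aw_3^b$ has odd degree precisely when $b$ is odd, so in the odd degree $2^{k+2}+1$ the quotient is spanned by the single monomial $w_2^{2^{k+1}-1}w_3$, and therefore $z_k\equiv c_k\,w_2^{2^{k+1}-1}w_3$ for a scalar $c_k\in\Zz/2$ that is all that remains to be determined.

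Finally I would compute $c_k$ by expanding $Sq^{2^{k+1}}(w_2^{m}w_3)$ with $m=2^k-1$ via the Cartan formula, using $Sq^0w_3=w_3$, $Sq^1w_3=0$, $Sq^2w_3=w_2w_3$ and $Sq^3w_3=w_3^2$; only the $j=0$ and $j=2$ terms feed the $w_3^1$-part. The pure $w_2$-part of $Sq(w_2^m)=(w_2+w_3+w_2^2)^m$ is read off by setting $w_3=0$, namely $w_2^m(1+w_2)^m=\sum_l\binom{m}{l}w_2^{m+l}$, so the two contributing terms carry the coefficients $\binom{2^k-1}{2^k}=0$ and $\binom{2^k-1}{2^k-1}=1$; hence $c_k=1$ and the induction closes. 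I expect the heart of the matter to be exactly this last part: a direct attack on the iterated square $Sq^{2^{k+1}}\cdots Sq^2(w_3)$ is intractable, and the argument lives or dies on the two observations that $(w_3^3)$ is Steenrod-closed (so one may pass to the quotient) and that the parity of the degree reduces the whole target to a one-dimensional space, leaving a single binomial coefficient to evaluate.
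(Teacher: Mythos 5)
Your proof is correct, and it takes a genuinely different route from the paper's for the inductive step. For $\chi^*(x_1)$ the paper quotes the spectral-sequence fact that $H^3(BPU(n);\Zz)\cong\Zz/n$ is generated by $\chi^*(x_1)$, whereas you identify $\chi^*(\iota_3)$ with the lifting obstruction $W_3$ via $\mathrm{Spin}^c(3)\cong U(2)$; that identification is standard but asserted rather than proved, and if you want to avoid citing it, note that $\chi^*(\iota_3)=0$ would make $\chi$ null-homotopic, so the homotopy fiber $BU(2)$ would be $BPU(2)\times K(\Zz,2)$, contradicting $H^3(BU(2);\Zz)=0$ against $H^3(BPU(2);\Zz)=\Zz/2$. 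For the classes $x_{2,k}$ the paper inducts \emph{backwards} through the Bockstein: from $Sq^1(x_{2,0})=x_1^2$ and $Sq^1(x_{2,k})=x_{2,k-1}^2$ it computes $Sq^1\chi^*(x_{2,k})\equiv w_2^{2^{k+1}-2}w_3^2 \bmod (w_3^3)$ and then observes that $w_2^{2^{k+1}-1}w_3$ is the \emph{unique} class of degree $2^{k+2}+1$ in the quotient with that $Sq^1$-image -- an inverse problem, solvable because the relevant degree is one-dimensional mod $(w_3^3)$. You instead induct \emph{forwards} using $x_{2,k}=Sq^{2^{k+1}}(x_{2,k-1})$, a relation Proposition \ref{prop:mod 2 cohomology of k(3)} records but the paper's proof never uses, and this obliges you to supply two ingredients the paper does not need: that $(w_3^3)$ is closed under the Steenrod algebra (so the congruence for $z_{k-1}$ may be pushed through $Sq^{2^{k+1}}$), and the explicit Cartan--Wu evaluation, where your bookkeeping is right -- the $j=1,3$ terms and all mixed monomials are killed by $Sq^1w_3=0$, by $(w_3^3)$, or by the parity constraint, leaving exactly $\binom{2^k-1}{2^k}=0$ and $\binom{2^k-1}{2^k-1}=1$, so $c_k=1$. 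The paper's route is shorter and dodges all Cartan expansions of large squares; yours costs the Steenrod-closedness lemma and some coefficient-chasing, but it is a direct evaluation that pins down the class without needing injectivity of $Sq^1$ on the quotient, and it makes visible why the leading coefficient is $1$. Both arguments ultimately rest on the same degree-parity observation that the quotient $\Zz/2[w_2,w_3]/(w_3^3)$ is one-dimensional in the target degree.
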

\begin{proof}
	First we show that $\chi^*(x_1)=w_3$. This follows from the computation in \cite{Gu21} that $H^3(BPU(n);\Zz)=\Zz/n$ is generated by $\chi^*(x_1)$.	Here we give a short explanation to this. Since $^UE_{2}^{0,*}={^UE}_{3}^{0,*}=H^*(BU(n);\Zz)\cong \Zz[c_1,\dots,c_n]$ and $^Ud_3^{0,*}$ is $\nabla(-)\cdot x_1$, it follows that $^Ud_3^{0,2}(c_1)=nx_1\in {^UE}_{3}^{3,0}$. Therefore, for degree reasons \[^UE_{4}^{3,0}={^UE}_{\infty}^{3,0}=H^3(BPU(n);\Zz)=\Zz/n\{\chi^*(x_1)\}.\] 
	
	Now we give the formula for $\chi^*(x_{2,k})$ by induction on $k$, starting with $\chi^*(x_{2,0})$. Since $Sq^1(x_{2,0})=x_1^2$ by Proposition \ref{prop:mod 2 cohomology of k(3)}, we have $Sq^1\chi^*(x_{2,0})=\chi^*Sq^1(x_{2,0})=w_3^2$. Hence $\chi^*(x_{2,0})=w_2w_3$, the only nonzero element of degree $5$. Inductively, suppose that $\chi^*(x_{2,k-1})$ is congruent to $w_2^{2^{k}-1}w_3$ modulo the ideal generated by $w_3^3$. Then since $Sq^1(x_{2,k})=x_{2,k-1}^2$ by Proposition \ref{prop:mod 2 cohomology of k(3)}, we have \[Sq^1\chi^*(x_{2,k})=\chi^*Sq^1(x_{2,k})\equiv w_2^{2^{k+1}-2}w_3^2 \mod (w_3^3).\] 
	Since $Sq^1$ preserves $(w_3^3)$ and in $H^*(BSO(3);\Zz/2)/(w_3^3)$, $w_2^{2^{k+1}-1}w_3$ is the only degree-$(2^{k+2}+1)$ element mapped by $Sq^1$ to $w_2^{2^{k+1}-2}w_3^2$, it follows that $\chi^*(x_{2,k})\equiv w_2^{2^{k+1}-1}w_3$ mod $(w_3^3)$, and the induction step is finished. 
\end{proof}

\begin{cor}\label{cor:hom}
In the notation of Theorem \ref{thm:cohomology of k(3)} and Theorem \ref{thm:SO3}, the homomorphism $\chi^*:H^*(K(\Zz,3);\Zz)\to H^*(BPU(2);\Zz)$ satisfies
$\chi^*(x_1)=W_3$, and $\chi^*(y_{2,k})\equiv p_1^{2^k-1}W_3^2\mod(W_3^6)$ for $k\geq 0$.
\end{cor}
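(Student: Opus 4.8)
The plan is to bootstrap the integral statement from its mod $2$ shadow in Theorem \ref{thm:hom mod 2}, using that the torsion of $H^*(BPU(2);\Zz)=H^*(BSO(3);\Zz)=\Zz[p_1,W_3]/(2W_3)$ is as simple as possible. First I would record the torsion structure from Theorem \ref{thm:SO3}: as a $\Zz$-module this ring splits as the free part $\Zz[p_1]$ together with the ideal $(W_3)$, which is a $\Zz/2$-vector space with basis $\{p_1^aW_3^b: a\geq 0,\ b\geq 1\}$. Under $\rho$ these basis elements go to the distinct monomials $\rho(p_1^aW_3^b)=w_2^{2a}w_3^{b}$, so $\rho$ is injective on $(W_3)$; moreover $\rho(p_1^aW_3^b)\in(w_3^6)$ exactly when $b\geq 6$. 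This yields the elementary principle I will lean on: if $z\in(W_3)$ and $\rho(z)\in(w_3^6)$, then in fact $z\in(W_3^6)$.

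The formula $\chi^*(x_1)=W_3$ is then immediate: $\chi^*(x_1)$ lies in $H^3(BPU(2);\Zz)=\Zz/2\{W_3\}$, and by naturality of $\rho$ together with Theorem \ref{thm:hom mod 2} its reduction is $\chi^*(x_1\bmod 2)=w_3\neq 0$, so $\chi^*(x_1)$ must be the unique nonzero element $W_3$.

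For the classes $y_{2,k}$ the main idea is that each is $2$-torsion in $H^*(K(\Zz,3);\Zz)$, since $2y_{2,k}=d(x_{2,k})$ is a coboundary in $C(3)$. Hence $\chi^*(y_{2,k})$ lands in the torsion ideal $(W_3)$, and by the principle above it is enough to compute its mod $2$ reduction up to $(w_3^6)$. By naturality $\rho\chi^*(y_{2,k})=\chi^*\rho(y_{2,k})$, and Theorem \ref{thm:cohomology of k(3)} identifies $\rho(y_{2,k})$ with $x_1^2$ for $k=0$ and with $x_{2,k-1}^2$ for $k\geq 1$. Substituting the values of Theorem \ref{thm:hom mod 2} and squaring---noting that the cross term carries a factor $2$ and so vanishes mod $2$, while the error term lies in $(w_3^3)$ and hence squares into $(w_3^6)$---I obtain $\rho\chi^*(y_{2,k})\equiv w_2^{2^{k+1}-2}w_3^2\pmod{(w_3^6)}$ uniformly for $k\geq 0$. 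Since $\rho(p_1^{2^k-1}W_3^2)=w_2^{2^{k+1}-2}w_3^2$ as well, the difference $\chi^*(y_{2,k})-p_1^{2^k-1}W_3^2$ lies in $(W_3)$ and reduces into $(w_3^6)$; the principle from the first paragraph then forces it into $(W_3^6)$, which is precisely the assertion.

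The only genuinely delicate point---the part I expect to require care rather than cleverness---is the passage from $H^*(K(\Zz,3);\Zz)$ to its mod $2$ reduction: namely justifying that $\rho(y_{2,k})=x_{2,k-1}^2$ from the description in Theorem \ref{thm:cohomology of k(3)}, and checking that squaring the congruences of Theorem \ref{thm:hom mod 2} really does preserve agreement modulo $(w_3^6)$. Once the torsion submodule of $H^*(BSO(3);\Zz)$ and the injectivity of $\rho$ on it are in place, the rest is formal.
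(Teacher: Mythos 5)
Your proposal is correct and follows essentially the same route as the paper: reduce mod $2$ via naturality, identify $\rho(y_{2,0})=x_1^2$ and $\rho(y_{2,k})=x_{2,k-1}^2$, square the congruences of Theorem \ref{thm:hom mod 2} (the Frobenius kills cross terms and pushes the error from $(w_3^3)$ into $(w_3^6)$), and then lift back using that $\chi^*(y_{2,k})$ is torsion and $p_1^{2^k-1}W_3^2$ is the unique torsion class in $H^*(BPU(2);\Zz)/(W_3^6)$ with that reduction. Your explicit verification that $\rho$ is injective on the torsion ideal $(W_3)$, with basis $\{p_1^aW_3^b:b\geq 1\}$, is exactly the uniqueness the paper invokes in one line, so the two arguments coincide in substance.
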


\begin{proof}
The formula for $\chi^*(x_1)$ is obvious. From Section \ref{sec:K3}, we know that  $\rho(y_{2,0})=x_1^2$ and $\rho(y_{2,k})=x_{2,k-1}^2$ for $k\geq 1$. 
Then, by Theorem \ref{thm:hom mod 2} we have $\rho\chi^*(y_{2,k})=\chi^*\rho(y_{2,k})\equiv w_2^{2^{k+1}-2}w_3^2\mod(w_3^6)$ for $k\geq 0$. This implies that $\chi^*(y_{2,k})\equiv p_1^{2^k-1}W_3^2\mod(W_3^6)$ since $p_1^{2^k-1}W_3^2$ is the only torsion element in $H^*(BPU(2);\Zz)/(W_3^6)$ mapped by $\rho$ to $w_2^{2^{k+1}-2}w_3^2\mod(w_3^6)$.
\end{proof}

\begin{prop}\label{prop:nonzero}
	Let $n>0$ be an interger such that $2\mid n$. Then for any $i,j,k\geq 0$,
	
	(a) $x_1^ix_{2,k}^j\neq 0$ in $H^*(BPU(n);\Zz/2)$.
	
	(b) $x_1^iy_{2,k}^j\neq 0$ in $H^*(BPU(n);\Zz)$.
	
	Here we abuse the notations $x_1$, $x_{2,k}$, $y_{2,k}$, to denote their images under the cohomology ring homomorphism induced by the fibration map $BPU(n)\to K(\Zz,3)$.
\end{prop}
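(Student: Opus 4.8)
The plan is to reduce the general case $2\mid n$ to the already-understood case $n=2$ (where $PU(2)=SO(3)$) by pulling back along a suitable homomorphism of classifying spaces. Write $n=2m$ and consider the embedding $\iota\colon U(2)\hookrightarrow U(n)$, $A\mapsto A\otimes I_m$, i.e.\ the block-repeat map sending $A$ to $\mathrm{diag}(A,\dots,A)$ with $m$ copies. The decisive feature is that $\iota$ carries the center $S^1=\{\lambda I_2\}$ of $U(2)$ onto the center $S^1=\{\lambda I_n\}$ of $U(n)$ by $\lambda I_2\mapsto\lambda I_n$, which is the \emph{identity} on the scalar $\lambda$. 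Hence $\iota$ descends to a homomorphism $PU(2)\to PU(n)$, giving a map $f\colon BPU(2)\to BPU(n)$, and $\iota$ fits into a morphism between the two fibrations of \eqref{eq:map chi}. Since $K(\Zz,3)$ is an Eilenberg--Mac Lane space, the induced self-map $g$ of the base is determined up to homotopy by its effect on $\pi_3$, which is $B^2$ of the degree-$1$ map on centers; thus $g\simeq\mathrm{id}$, and writing $\chi'\colon BPU(2)\to K(\Zz,3)$, $\chi\colon BPU(n)\to K(\Zz,3)$ for the two classifying maps, we get $\chi\circ f\simeq\chi'$, so $f^*\circ\chi^*=\chi'^*$ on cohomology.

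With this compatibility, it suffices to detect the classes after applying $f^*$, that is, to show $\chi'^*(x_1)^i\chi'^*(x_{2,k})^j\neq 0$ in $H^*(BSO(3);\Zz/2)$ for (a), and $\chi'^*(x_1)^i\chi'^*(y_{2,k})^j\neq 0$ in $H^*(BSO(3);\Zz)$ for (b). The images are given by Theorem \ref{thm:hom mod 2} and Corollary \ref{cor:hom}: $\chi'^*(x_1)=w_3$ with $\chi'^*(x_{2,k})\equiv w_2^{2^{k+1}-1}w_3\pmod{(w_3^3)}$, and $\chi'^*(x_1)=W_3$ with $\chi'^*(y_{2,k})\equiv p_1^{2^k-1}W_3^2\pmod{(W_3^6)}$.

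For part (a) I would run a leading-term argument in $H^*(BSO(3);\Zz/2)=\Zz/2[w_2,w_3]$ graded by the power of $w_3$. Expanding $\chi'^*(x_{2,k})^j$, the unique term of least $w_3$-degree is $w_2^{j(2^{k+1}-1)}w_3^{j}$ (of $w_3$-degree $j$), while every other term has $w_3$-degree at least $j+2$ because the corrections lie in $(w_3^3)$. Multiplying by $\chi'^*(x_1)^i=w_3^i$ produces $w_2^{j(2^{k+1}-1)}w_3^{i+j}$ plus terms of $w_3$-degree at least $i+j+2$; as distinct monomials cannot cancel in $\Zz/2[w_2,w_3]$, the product is nonzero. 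Part (b) is identical in spirit, carried out in $H^*(BSO(3);\Zz)=\Zz[p_1,W_3]/(2W_3)$ graded by the power of $W_3$: the product equals $p_1^{j(2^k-1)}W_3^{i+2j}$ plus corrections of $W_3$-degree at least $i+2j+4$. Since $\Zz[p_1,W_3]/(2W_3)$ is $W_3$-graded with graded piece $\Zz[p_1]$ in degree $0$ and free $\Zz/2[p_1]\cdot W_3^{b}$ in each degree $b\geq 1$, the monomial $p_1^{j(2^k-1)}W_3^{i+2j}$ is nonzero in its graded piece and cannot be cancelled, giving the claim.

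The main obstacle, and the step that requires genuine care, is the structural input rather than the bookkeeping: verifying that $\iota$ really descends to $PU(2)\to PU(n)$ and that the resulting self-map of $K(\Zz,3)$ is the identity up to homotopy — equivalently, that the homomorphism $S^1\to S^1$ on centers has degree $1$. This is precisely where the hypothesis $2\mid n$ enters, since the naive embedding $A\mapsto\mathrm{diag}(A,I_{n-2})$ does \emph{not} send the center of $U(2)$ into the center of $U(n)$, whereas the tensor embedding $A\mapsto A\otimes I_m$ does exactly because $n$ is even. Once this degree-$1$ fact is established, the remaining leading-term computations are routine.
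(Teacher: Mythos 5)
Your proposal is correct and follows essentially the same route as the paper: the tensor embedding $A\mapsto A\otimes I_{n/2}$ is exactly the paper's diagonal map $U(2)\to U(n)$, which likewise yields the commutative diagram of fibrations over $K(\Zz,3)$ (with identity on the base, since the map on central circles has degree $1$) and reduces detection to $H^*(BPU(2))$ via Theorem \ref{thm:hom mod 2} and Corollary \ref{cor:hom}. Your explicit leading-term argument in $\Zz/2[w_2,w_3]$ and $\Zz[p_1,W_3]/(2W_3)$ merely spells out the final step that the paper leaves implicit, and it is valid.
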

\begin{proof}
	Since $2\mid n$, there is a diagonal map of matrices 
	\[U(2)\to U(n),\quad A\mapsto \begin{bmatrix}
		A&0&\cdots&0\\
		0&A&\cdots&0\\
		\vdots&\vdots&\ddots&\vdots\\
		0&\cdots&\cdots&A
	\end{bmatrix},\] which passes to $PU(2)\to PU(n)$.
	These diagonal maps induce maps $\Delta:BU(2)\to BU(n)$ and $\bar\Delta:BPU(2)\to BPU(n)$, and a commutative diagram of fibrations
	\begin{equation}\label{diag:diagonal}
		\begin{gathered}
			\xymatrix{
				BU(2)\ar[r]\ar^{\Delta}[d]&BPU(2)\ar[r]\ar[d]^{\bar\Delta}&K(\Zz,3)\ar[d]^{=}\\
				BU(n)\ar[r]&BPU(n)\ar[r]&K(\Zz,3)}
		\end{gathered}
	\end{equation}
Since the map $\bar\Delta^*$ takes the elements $x_1$, $x_{2,k}$, $y_{2,k}$, to themself, the corollary follows from Theorem \ref{thm:hom mod 2} and Corollary \ref{cor:hom}.
\end{proof}

\section{On the spectral sequence $^UE$}\label{sec:^UE}
%The following result is needed in the proof of Theorem \ref{thm:BPU4}.
\begin{prop}\label{prop:K_4}
	In  the integral spectral sequence $^UE$ associated to $BU(4)\xrightarrow{\pi} BPU(4)\xrightarrow{\chi} K(\Zz,3)$, we have	$K_4\cong {^U}E^{0,*}_4={^U}E^{0,*}_\infty$. Moreover, $\Zz/4[\alpha_4,\alpha_6]$ is a submodule of ${^U}E^{3,*}_\infty$.
\end{prop}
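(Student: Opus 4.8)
The plan is to establish the two claims separately. The first claim, $K_4\cong {^U}E^{0,*}_4={^U}E^{0,*}_\infty$, I would break into two parts. The isomorphism $K_4\cong {^U}E^{0,*}_4$ is immediate from Corollary \ref{cor:E_4}, specialized to $n=4$, since that corollary identifies ${^U}E^{0,*}_4=\ker {^U}d_3^{0,*}$ with $K_n=\ker\nabla$ on $\Lambda_n$. The harder equality is ${^U}E^{0,*}_4={^U}E^{0,*}_\infty$, which amounts to showing that all higher differentials ${^U}d_r^{0,*}$ for $r\geq 4$ vanish on the base line. For this I would argue that any class on the $0$-column surviving to $E_4$ lifts to a genuine cohomology class: elements of $K_4$ are represented by integral polynomials in the Chern classes $c_i$, and the point is that these ${^U}d_3$-cocycles are permanent cocycles because they come from $H^*(BSU(4))$-type invariants. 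Concretely, $\alpha_2,\alpha_3,\alpha_4$ and $64\alpha_6$ can be expressed as polynomials in the $c_i$ lying in the kernel of $\nabla$, and since ${^U}E_4^{0,*}=K_4$ is generated as a ring by $\alpha_2,\alpha_3,\alpha_4,\alpha_6$ (Theorem \ref{thm:K_4}), it suffices to check that each generator survives. By multiplicativity of the spectral sequence, survival of the ring generators forces survival of all of $K_4$, giving ${^U}E^{0,*}_4={^U}E^{0,*}_\infty$.

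For the second claim, that $\Zz/4[\alpha_4,\alpha_6]$ embeds as a submodule of ${^U}E^{3,*}_\infty$, the strategy is to combine the algebraic input from Proposition \ref{prop:order} with the structure of ${^U}d_3$. The key observation is that ${^U}E^{3,*}_4=\coker({^U}d_3^{0,*})\cdot x_1$, since the $3$-column of $E_3$ receives ${^U}d_3^{0,*}=\nabla(-)\cdot x_1$ from the $0$-column and (by the concentration of $E_2$ in even rows and the form of the differentials) has no incoming or outgoing differentials interfering at this stage that would further reduce it below the cokernel. Thus ${^U}E^{3,*}_4\cong (\coker\nabla)\cdot x_1$ as a $K_4$-module. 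Proposition \ref{prop:order} tells us precisely that $\Zz/4[\alpha_4,\alpha_6]$ injects into $\coker\nabla$, so $\Zz/4[\alpha_4,\alpha_6]\cdot x_1$ is a submodule of ${^U}E^{3,*}_4$.

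It then remains to push this submodule past all higher differentials to $E_\infty$. I would show that the classes $\alpha_4^i\alpha_6^j x_1$ are permanent cocycles and are not hit. That they survive as cocycles should follow because $x_1$ is a permanent cocycle (Proposition \ref{prop:differentials K} gives ${^K}d_r(x_1)=0$, which transports to $^UE$) and $\alpha_4,\alpha_6\in K_4$ are permanent cocycles by the first part of this proposition; by the Leibniz rule their products are as well. That they are not boundaries is where Proposition \ref{prop:nonzero} enters: the relevant torsion classes map nontrivially under $\bar\Delta^*$ to $H^*(BPU(2))$ via $\chi^*(x_1)=W_3$ and the formulas of Corollary \ref{cor:hom}, so they cannot be killed. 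Assembling these, $\Zz/4[\alpha_4,\alpha_6]$ survives to ${^U}E^{3,*}_\infty$ as a submodule.

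The main obstacle I anticipate is the \emph{non-vanishing} half of both claims — verifying that the surviving classes are genuinely nonzero at $E_\infty$ and not eventually killed by some higher ${^U}d_r$. The cocycle condition is routine via Leibniz and the known differentials, but ruling out that these classes are boundaries requires the detection argument through the diagonal map $\bar\Delta:BPU(2)\to BPU(4)$ and the explicit images of $x_1$ and $y_{2,k}$ computed in Section \ref{sec:hom}. Precisely controlling the orders (that the order in $\coker\nabla$ is exactly $4$, not smaller) is the delicate point, and this is exactly what Proposition \ref{prop:order} is designed to supply; the remaining work is to transfer that algebraic statement into a statement about $E_\infty$ without loss of torsion information.
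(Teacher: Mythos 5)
Your overall skeleton (Corollary \ref{cor:E_4} for $K_4\cong{^U}E_4^{0,*}$, reduction to ring generators via the Leibniz rule, Proposition \ref{prop:order} for the order-$4$ statement) matches the paper, but there is a genuine gap in the first claim: you never actually prove that the generators survive past $E_4$. The assertion that the classes are ``permanent cocycles because they come from $H^*(BSU(4))$-type invariants'' is circular --- a class in ${^U}E_4^{0,*}$ is a permanent cycle precisely when it lies in the image of $\pi^*:H^*(BPU(4);\Zz)\to H^*(BU(4);\Zz)$, which is exactly what is to be proved; lying in $\ker\nabla$ only gets a class to the $E_4$ page and says nothing about $d_r$ for $r\geq 4$. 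The paper's proof handles $\alpha_2,\alpha_3,\alpha_4$ by citing \cite{Gu21} and then devotes its real work to $\alpha_6$: from the structure of $H^*(K(\Zz,3);\Zz)$ the only possibly nonzero higher differentials out of ${^U}E^{0,12}_*$ are $d_9$, $d_{11}$, $d_{13}$ with targets in positions $(9,4)$, $(11,2)$, $(13,0)$, and one kills each target --- ${^U}E_4^{9,4}=0$ by an explicit $\nabla_{\Zz/2}$ computation (showing $\ker d_3^{9,4}=\Zz/2\{x_1^3c_1^2\}=\im d_3^{6,6}$), ${^U}E_4^{11,2}=0$ by a mod $3$ $d_3$ computation, and ${^U}E_2^{13,0}=\Zz/2\{x_1y_{2,1}\}$ is generated by a permanent cycle (Proposition \ref{prop:nonzero}), which therefore cannot be hit by any differential. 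This case analysis is the substance of the proof and cannot be bypassed by multiplicativity alone.

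For the second claim, two corrections. First, your identification ${^U}E_4^{3,*}\cong(\coker\nabla)\cdot x_1$ overlooks the outgoing differential $d_3^{3,*}(x_1f)=\pm\nabla(f)\,x_1^2$ into ${^U}E_3^{6,*}$, so ${^U}E_4^{3,*}$ is only $\ker d_3^{3,*}/\im d_3^{0,*}$; this happens to be harmless for you since $\nabla(f)=0$ when $f\in\Zz[\alpha_4,\alpha_6]$, but the identification as stated is false. Second, and more seriously, your plan to establish survival to $E_\infty$ by detection under $\bar\Delta^*$ in $H^*(BPU(2))$ is both unnecessary and would fail. It is unnecessary because for $r\geq4$ any differential into the third column originates in column $3-r<0$: there are no incoming differentials after $E_3$, so a permanent cocycle in ${^U}E_4^{3,*}$ injects into ${^U}E_\infty^{3,*}$ with its order intact --- this positional observation, combined with Proposition \ref{prop:order} at the $E_4$ page, is the paper's entire (one-line) argument. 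It would fail because all torsion in $H^*(BPU(2);\Zz)$ has order $2$, so no detection of order $4$ is possible there, and moreover $\bar\Delta^*(\bar\alpha_6)\equiv0\bmod 2$ (cf.\ Proposition \ref{prop:alpha_i}), so every monomial $x_1\alpha_4^i\alpha_6^j$ with $j>0$ maps to zero in $H^*(BPU(2);\Zz)$ and is invisible to your proposed detection. The delicate torsion bookkeeping you flag at the end is resolved entirely on the algebraic side by Proposition \ref{prop:order}, not by mapping to $BPU(2)$.
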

\begin{proof}
By Corollary \ref{cor:E_4}, $K_4\cong {^U}E^{0,*}_4$.  Since the differentials in the spectral sequence $^UE$ satisfy the Leibniz rule and $K_4$ is generated by $\alpha_2$, $\alpha_3$, $\alpha_4$, $\alpha_6$, it suffices to show that these generators survive to $^UE_\infty$. For $\alpha_2$, $\alpha_3$, $\alpha_4$, this was proved in \cite{Gu21}, so we only consider $\alpha_6\in {^U}E^{0,12}_4$. 

The differentials in $^UE$ have the form
$d_r:{^UE}^{s,t}_r\to{^UE}^{s+r,t-r+1}_r$. Hence from Theorem \ref{thm:cohomology of k(3)} we see that the only possible nontrivial differentials $d_r$, $r\geq 4$, originating at ${^U}E^{0,12}_*$, are $d_9:{^U}E^{0,12}_9\to {^U}E^{9,4}_9$, $d_{11}:{^U}E^{0,12}_{11}\to {^U}E^{11,2}_{11}$ and $d_{13}:{^U}E^{0,12}_{13}\to {^U}E^{13,0}_{13}$. Note that ${^U}E^{9,4}_2\cong x_1^3\cdot H^4(BU(4);\Zz)$, ${^U}E^{11,2}_2\cong \Zz/3\{x_1x_{3,0}c_1\}$ and
	${^U}E^{13,0}_2\cong \Zz/2\{x_1y_{2,1}\}$ by Theorem \ref{thm:cohomology of k(3)}. 
	
	From Proposition \ref{prop:nonzero} we know that $x_1y_{2,1}$ is a permanent cycle, so it can not be hitten by any differential in $^UE$.
	Furthermore, from \eqref{eq:nabla} we see that $d_3(x_{3,0}c_1^2)\equiv -x_1x_{3,0}c_1$ mod $3$, so ${^U}E^{11,2}_{4}=0$. 
	Hence the proposition will follow once we prove that $^UE_4^{9,4}=0$. This is an easy calculation using the following commutative diagram
	\[\xymatrix{\Lambda_4^3\otimes\Zz/2\ar[r]^{\nabla_{\Zz/2}}\ar^{\cong}[d]&\Lambda_4^2\otimes\Zz/2\ar[r]^{\nabla_{\Zz/2}}\ar[d]^{\cong}&\Lambda_4^1\otimes\Zz/2\ar@{^{(}->}[d]\\
		{^UE}_3^{6,6}\ar[r]^{d_3^{6,6}}&{^UE}_3^{9,4}\ar[r]^{d_3^{9,4}}&{^UE}_3^{12,2}}
		\]
The right vertival map is only an injection since besides $x_1^4\cdot H^2(BU(4);\Zz)$, $y_{5,0}\cdot H^2(BU(4);\Zz)$ also contributes to ${^UE}_3^{12,2}$, but this does not effect our computation.	It can be shown that $\ker d_3^{9,4}=\Zz/2\{x_1^3c_1^2\}$ and $d_3^{6,6}(x_1^2c_2c_1)=x_1^3c_1^2$. Then we have $^UE_4^{9,4}=\ker d_3^{9,4}/\im d_3^{6,6}=0$.

Using Proposition \ref{prop:order}, we see  that $x_1f$ in $^UE_4^{3,*}$ has order $4$ for any $f\in\Zz[\alpha_4,\alpha_6]\subset H^*(BU(4);\Zz)$. Since $\alpha_4$, $\alpha_6$ and $x_1$ are all permanent cycles, so is $x_1f$, and the second statement follows.
\end{proof}
Proposition \ref{prop:K_4} says that there exist elements $\bar\alpha_i\in H^{2i}(BPU(4);\Zz)$, $i=2,3,4,6$, such that  $\pi^*(\bar\alpha_i)=\alpha_i\in H^{2i}(BU(4);\Zz)$.
Let $\Delta:BU(2)\to BU(4)$, $\bar\Delta:BPU(2)\to BPU(4)$ be the maps defined in the proof of Proposition \ref{prop:nonzero}. We consider the images of $\alpha_i$ and $\bar\alpha_i$ under the maps $\Delta^*$ and $\bar\Delta^*$ respectively. 

Let $c_i$ and $c_i'$ be the $i$th universal Chern classes of $BU(4)$ and $BU(2)$ respectively. 
%As we know, $H^*(BU(4);\Zz)=\Zz[c_1,\dots,c_4]$ and $H^*(BU(2);\Zz)=\Zz[c_1'\dots,c_2']$.
Since the map $\Delta$ factors through the diagonal map $BU(2)\to BU(2)\times BU(2)$, using the Whitney sum formula and the functorial property of total Chern classes we immediately get
\[
	\Delta^*(1+c_1+c_2+c_3+c_4)=(1+c_1'+c_2')^2.
\]
In other words, 
\begin{equation}\label{eq:chern class}
\begin{gathered}
\Delta^*(c_1)=2c_1',\quad \Delta^*(c_2)=c_1'^2+2c_2',\\
 \Delta^*(c_3)=2c_1'c_2',\quad \Delta^*(c_4)=c_2'^2.
\end{gathered}
\end{equation}

\begin{prop}\label{prop:alpha_i}
The elements $\bar\alpha_i\in H^{2i}(BPU(4);\Zz)$, $i=2,3,4,6$, can be chosen such that $\bar\Delta^*(\bar\alpha_4)=11p_1^2$, and $\bar\Delta^*(\bar\alpha_i)\equiv 0\mod2$ for $i=2,3,6$.
\end{prop}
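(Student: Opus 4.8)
The plan is to exploit the commutativity of the two squares in \eqref{diag:diagonal}. Write $\pi_2\colon BU(2)\to BPU(2)$ for the projection (and $\pi\colon BU(4)\to BPU(4)$ as in the paper). The left square gives $\bar\Delta\circ\pi_2=\pi\circ\Delta$, hence $\pi_2^*\circ\bar\Delta^*=\Delta^*\circ\pi^*$; applying this to $\bar\alpha_i$ and using $\pi^*(\bar\alpha_i)=\alpha_i$ yields
\[
\pi_2^*\bigl(\bar\Delta^*(\bar\alpha_i)\bigr)=\Delta^*(\alpha_i)\in H^*(BU(2);\Zz)=\Zz[c_1',c_2'].
\]
So the first step is to compute each $\Delta^*(\alpha_i)$ by substituting the Chern-class identities \eqref{eq:chern class} (which come from the Whitney sum formula) into the explicit expressions \eqref{eq:alpha}. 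A short calculation, the one for $\alpha_6$ being the only lengthy one, should give
\begin{gather*}
\Delta^*(\alpha_2)=-4(c_1'^2-4c_2'),\qquad \Delta^*(\alpha_3)=0,\\
\Delta^*(\alpha_4)=(c_1'^2-4c_2')^2,\qquad \Delta^*(\alpha_6)=2(c_1'^2-4c_2')^3.
\end{gather*}

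The second step is to understand $\pi_2^*\colon H^*(BSO(3);\Zz)=\Zz[p_1,W_3]/(2W_3)\to\Zz[c_1',c_2']$. Using the adjoint (equivalently $\mathrm{Sym}^2$) description of $U(2)\to SO(3)$ one finds $\pi_2^*(p_1)=c_1'^2-4c_2'$, while $\pi_2^*(W_3)=0$ for torsion reasons; thus $\ker\pi_2^*$ is exactly the torsion ideal $(W_3)$, and $\pi_2^*$ is injective on the torsion-free quotient $\Zz[p_1]$. Comparing with the values above pins down each $\bar\Delta^*(\bar\alpha_i)$ modulo $(W_3)$:
\begin{gather*}
\bar\Delta^*(\bar\alpha_2)\equiv -4p_1,\qquad \bar\Delta^*(\bar\alpha_3)\equiv 0,\\
\bar\Delta^*(\bar\alpha_4)\equiv p_1^2,\qquad \bar\Delta^*(\bar\alpha_6)\equiv 2p_1^3 \pmod{(W_3)}.
\end{gather*}
In degrees $4$ and $8$ one checks $(W_3)\cap H^4=(W_3)\cap H^8=0$, so $\bar\Delta^*(\bar\alpha_2)=-4p_1$ and $\bar\Delta^*(\bar\alpha_4)=p_1^2$ hold exactly for \emph{any} lift (the difference of two lifts is torsion and $\bar\Delta^*$ carries it into the torsion-free groups $H^4,H^8$). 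Since $-4p_1\equiv0\bmod2$, this settles $i=2,4$. The only remaining issue is the torsion ambiguity in degrees $6$ and $12$, where $(W_3)\cap H^6=\Zz/2\{W_3^2\}$ and $(W_3)\cap H^{12}=\Zz/2\{W_3^4\}$: a priori $\bar\Delta^*(\bar\alpha_3)\in\{0,W_3^2\}$ and $\bar\Delta^*(\bar\alpha_6)\in\{2p_1^3,\,2p_1^3+W_3^4\}$, and the classes $W_3^2,W_3^4$ are not killed by mod $2$ reduction.

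The final step is to remove these possible $W_3$-terms by adjusting the lifts inside $\ker\pi^*$. Because $\chi\circ\pi$ is null (consecutive maps of the fibration \eqref{eq:map chi}), any class $z=\chi^*(\xi)$ pulled back from $K(\Zz,3)$ lies in $\ker\pi^*$, so it may be subtracted from a lift without changing $\pi^*(\bar\alpha_i)=\alpha_i$. For such $z$, the right square of \eqref{diag:diagonal} gives $\bar\Delta^*(z)=\chi_2^*(\xi)$, where $\chi_2\colon BPU(2)\to K(\Zz,3)$ is the map of Corollary \ref{cor:hom}; hence that corollary computes the effect. Taking $\xi=y_{2,0}$ gives $\bar\Delta^*(\chi^*(y_{2,0}))=\chi_2^*(y_{2,0})=W_3^2$, and taking $\xi=x_1^4$ gives $\bar\Delta^*(\chi^*(x_1^4))=(\chi_2^*(x_1))^4=W_3^4$. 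Thus, replacing $\bar\alpha_3$ by $\bar\alpha_3-\chi^*(y_{2,0})$ and $\bar\alpha_6$ by $\bar\alpha_6-\chi^*(x_1^4)$ when necessary forces $\bar\Delta^*(\bar\alpha_3)=0$ and $\bar\Delta^*(\bar\alpha_6)=2p_1^3$, both $\equiv0\bmod2$, while $\bar\alpha_2,\bar\alpha_4$ need no change.

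I expect the main obstacle to be precisely this last step rather than the symmetric-function bookkeeping, which is routine: one must exhibit, inside the torsion lattice $\ker\pi^*$, concrete classes whose $\bar\Delta^*$-images are exactly the unwanted $W_3^2$ and $W_3^4$. This is where the identification of the $H^*(K(\Zz,3))$-classes together with the explicit image formulas of Corollary \ref{cor:hom} do the real work, and the auxiliary computation $\pi_2^*(p_1)=c_1'^2-4c_2'$ is the other input one cannot skip.
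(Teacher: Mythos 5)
Your proposal is correct and takes essentially the same route as the paper: compute $\Delta^*(\alpha_i)$ from \eqref{eq:alpha} and \eqref{eq:chern class}, transfer to $BPU(2)$ via $\pi_2^*\bar\Delta^*=\Delta^*\pi^*$ using $\pi_2^*(p_1)=\pm(c_1'^2-4c_2')$ and the torsion-free/torsion structure of $H^*(BPU(2);\Zz)$ in degrees $4,6,8,12$, then kill the residual $W_3^2$ and $W_3^4$ ambiguities by adjusting the lifts inside $\ker\pi^*$. Your adjustments $\chi^*(y_{2,0})$ and $\chi^*(x_1^4)$ are literally the paper's $x_1^2$ and $x_1^4$ (since $y_{2,0}=x_1^2$), and the only cosmetic differences are that you compute $\Delta^*(\alpha_i)$ integrally (e.g.\ $\Delta^*(\alpha_6)=2(c_1'^2-4c_2')^3$, where the paper needs only the mod $2$ vanishing) and obtain $\pi_2^*(p_1)$ from the adjoint description rather than from the $^UE$ computation ${^UE}_\infty^{0,4}\cong K_2$.
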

\begin{proof}
	It follows from \eqref{eq:alpha} and \eqref{eq:chern class} that $\Delta^*(\alpha_i)\equiv 0$ mod $2$ for $i=2,3,6$, and $\Delta^*(\alpha_4)=11(c_1'^2-4c_2')^2$. 
	On the other hand, for the spectral sequence $^UE$ associated to $BU(2)\xrightarrow{\pi} BPU(2)\xr{\chi}K(\Zz,3)$, an easy calculation shows that ${^UE_\infty^{0,4}}={^UE_4^{0,4}}\cong K_2=\Zz[\sigma_1^2-4\sigma_2]\subset\Lambda_2$. 
	This means that for $p_1\in H^4(BSO(3);\Zz)=H^4(BPU(2);\Zz)$, $\pi^*(p_1)=\pm (c_1'^2-4c_2')$. 
	
Recall from Theorem \ref{thm:SO3} that 
\begin{gather*}
H^4(BPU(2);\Zz)\cong \Zz\{p_1\},\ \  H^6(BPU(2);\Zz)\cong \Zz/2\{W_3^2\},\\ H^8(BPU(2);\Zz)\cong \Zz\{p_1^2\},\ \ H^{12}(BPU(2);\Zz)\cong \Zz\{p_1^3\}\oplus\Zz/2\{W_3^4\}.
\end{gather*} 
Since $\pi^*\bar\Delta^*=\Delta^*\pi^*$, combining the above computations, we obtain $\bar\Delta^*(\bar\alpha_4)=11p_1^2$ and $\bar\Delta^*(\bar\alpha_2)\equiv 0$ mod $2$. We have seen that $\bar\Delta^*(x_1)=W_3$, hence by adding $x_1^2$ to $\bar\alpha_3$ or $x_1^4$ to $\bar\alpha_6$ if necessary, we can also obtain $\bar\Delta^*(\bar\alpha_3),\,\bar\Delta^*(\bar\alpha_6)\equiv 0$ mod $2$.
\end{proof}

\section{On $H^*(BPU(4);\Zz/2)$}\label{sec:on the mod 2 cohomology}
In this section we prove some facts about the generators $y_i$ of $H^*(BPU(4);\Zz/2)$ in Theorem \ref{thm:toda}. 
%The first result says that the odd dimensional generators are in the image of the homomorphism $\chi^*:H^*(K(\Zz,3);\Zz/2)\to H^*(BPU(4);\Zz/2)$.
\begin{prop}\label{prop:odd generator}
	In the notation of Proposition \ref{prop:mod 2 cohomology of k(3)}, $\chi^*(x_1)=y_3$, $\chi^*(x_{2,0})=y_5$, $\chi^*(x_{2,1})=y_9$ or $y_3^3+y_9$.
\end{prop}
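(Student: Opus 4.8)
The plan is to pin down each image by combining a dimension count in the target ring $H^*(BPU(4);\Zz/2)$ (Theorem \ref{thm:toda}) with the non-vanishing statement of Proposition \ref{prop:nonzero}, and then to resolve the remaining degree-$9$ ambiguity using naturality of the Steenrod squares together with restriction to $BPU(2)=BSO(3)$.

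First I would dispatch the two low-degree cases. Reading off Theorem \ref{thm:toda}, the relations $y_2y_3=y_2y_5=0$ force the degree-$3$ and degree-$5$ parts of $H^*(BPU(4);\Zz/2)$ to be one-dimensional, spanned by $y_3$ and $y_5$ respectively. Since $n=4$ is even, Proposition \ref{prop:nonzero}(a) (applied with $i=1,j=0$, and with $i=0,j=1,k=0$) shows that the classes $\chi^*(x_1)$ and $\chi^*(x_{2,0})$ are nonzero, which immediately forces $\chi^*(x_1)=y_3$ and $\chi^*(x_{2,0})=y_5$.

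For $x_{2,1}$ I would pass to the Steenrod action. By Proposition \ref{prop:mod 2 cohomology of k(3)} one has $x_{2,0}=Sq^2(x_1)$ and $x_{2,1}=Sq^4Sq^2(x_1)=Sq^4(x_{2,0})$, so by naturality $\chi^*(x_{2,1})=Sq^4\chi^*(x_{2,0})=Sq^4(y_5)$. Enumerating monomials (the candidates $y_2^3y_3$ and $y_2^2y_5$ vanish in the quotient) shows that the degree-$9$ part of $H^*(BPU(4);\Zz/2)$ is two-dimensional, spanned by $y_9$ and $y_3^3$, so
\[
Sq^4(y_5)=a\,y_9+b\,y_3^3,\qquad a,b\in\Zz/2.
\]
The whole content of the proposition is then the assertion $a=1$, i.e. that $\chi^*(x_{2,1})$ is neither $0$ nor $y_3^3$.

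The main obstacle is extracting the coefficient $a$ without already knowing the full Steenrod structure of $H^*(BPU(4);\Zz/2)$—that structure is the content of the later Theorem \ref{thm:steenrod} and must not be assumed here. I would circumvent this by restricting along the diagonal $\bar\Delta:BPU(2)\to BPU(4)$, where the target ring $H^*(BPU(2);\Zz/2)=\Zz/2[w_2,w_3]$ is completely understood (Theorem \ref{thm:SO3}). Since $\bar\Delta^*$ carries $\chi^*(x_{2,0})$ to the corresponding class for $BPU(2)$ (commutativity of \eqref{diag:diagonal}), Theorem \ref{thm:hom mod 2} gives $\bar\Delta^*(y_5)=w_2w_3$, whence $\bar\Delta^*(Sq^4y_5)=Sq^4(w_2w_3)$. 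Using the Cartan formula together with $Sq^1(w_2)=w_3$, $Sq^2(w_2)=w_2^2$ (Theorem \ref{thm:SO3}) and $Sq^2(w_3)=w_2w_3$, $Sq^3(w_3)=w_3^2$ (the latter two from the Wu formula in $BSO(3)$, where $w_1=w_4=w_5=0$), I would compute
\[
Sq^4(w_2w_3)=Sq^1(w_2)Sq^3(w_3)+Sq^2(w_2)Sq^2(w_3)=w_3^3+w_2^3w_3,
\]
all other Cartan terms being zero. On the other hand $\bar\Delta^*(y_3)=w_3$ gives $\bar\Delta^*(y_3^3)=w_3^3$, which contains no $w_2^3w_3$ term; hence in $\bar\Delta^*(a\,y_9+b\,y_3^3)=w_3^3+w_2^3w_3$ the summand $w_2^3w_3$ can only be supplied by $a\,\bar\Delta^*(y_9)$. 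Comparing coefficients of $w_2^3w_3$ then forces $a=1$, so $\chi^*(x_{2,1})=y_9$ or $y_9+y_3^3$, as claimed. I expect the delicate step to be precisely this last coefficient comparison: its validity rests on detecting the $w_2^3w_3$ term after restriction to $BSO(3)$, which is exactly why one cannot argue inside $H^*(BPU(4);\Zz/2)$ directly but must use the diagonal map.
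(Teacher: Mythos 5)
Your proposal is correct, but for the essential degree-$9$ step it takes a genuinely different route from the paper. The low-degree cases ($\chi^*(x_1)=y_3$, $\chi^*(x_{2,0})=y_5$ via Proposition \ref{prop:nonzero} and one-dimensionality of $H^3$ and $H^5$) coincide with the paper's argument. For $x_{2,1}$, however, the paper stays entirely inside $H^*(BPU(4);\Zz/2)$ and uses the Bockstein: since $Sq^1(x_{2,1})=x_{2,0}^2$, naturality gives $Sq^1\chi^*(x_{2,1})=y_5^2\neq 0$, while $Sq^1(y_3^3)=0$ because $Sq^1(y_3)=\chi^*Sq^1(x_1)=0$; hence the coefficient of $y_9$ in $\chi^*(x_{2,1})=ay_9+by_3^3$ must be $1$. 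You instead use the relation $x_{2,1}=Sq^4(x_{2,0})$, so $\chi^*(x_{2,1})=Sq^4(y_5)$, and restrict along $\bar\Delta:BPU(2)\to BPU(4)$, where Theorem \ref{thm:hom mod 2} gives $\bar\Delta^*(y_5)=w_2w_3$ and the Wu--Cartan computation $Sq^4(w_2w_3)=w_3^3+w_2^3w_3$ is carried out in the free ring $\Zz/2[w_2,w_3]$; since $\bar\Delta^*(y_3^3)=w_3^3$ cannot produce the monomial $w_2^3w_3$, the case $a=0$ is excluded. Your coefficient comparison is sound precisely as you state it (you need no knowledge of $\bar\Delta^*(y_9)$, only that $a=0$ would force the restriction to lie in $\{0,w_3^3\}$), and your dimension count $H^9=\Zz/2\{y_3^3,y_9\}$ is right. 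The paper's $Sq^1$ argument is shorter and needs no auxiliary space; your argument is slightly heavier but buys something the paper only obtains later: the identity $\chi^*(x_{2,1})=Sq^4(y_5)$ (used in Corollary \ref{cor:steenrod square for odd generator}) and the computation $\bar\Delta^*Sq^4(y_5)=w_2'^3w_3'+w_3'^3$, which the paper redoes in Proposition \ref{prop:even generator}. Note that, as in the paper, neither approach resolves the residual ambiguity $y_9$ versus $y_3^3+y_9$; that requires the restriction $\phi^*$ to $BSO(6)$ in Proposition \ref{prop:w_i}, since $\bar\Delta^*(y_9)$ is not yet known at this stage.
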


\begin{proof}
	From Proposition \ref{prop:nonzero} we know that $\chi^*(x_1)$, $\chi^*(x_{2,0})$, $\chi^*(x_{2,1})\neq0$. Since $\deg(x_1)=3$ and $y_3$ is the only nonzero element in $H^*(BPU(4);\Zz/2)$, $\chi^*(x_1)=y_3$. Similarly, $\chi^*(x_{2,0})=y_5$. 
	
	From Theorem \ref{thm:toda} one easily sees that $H^9(BPU(4);\Zz/2)=\Zz/2\{y_3^3,y_9\}$. Since $Sq^1(x_{2,1})=x_{2,0}^2$ by Proposition \ref{prop:mod 2 cohomology of k(3)}, it follows that $Sq^1\chi^*(x_{2,1})=\chi^*(x_{2,0}^2)=y_5^2$.  Hence $\chi^*(x_{2,1})$ takes one of the forms in the proposition since 
	$Sq^1(y_3)=Sq^1\chi^*(x_1)=\chi^*Sq^1(x_1)=0$.
\end{proof}

Recall from Proposition \ref{prop:mod 2 cohomology of k(3)} that $Sq^1(x_1)=0$, $Sq^1(x_{2,0})=x_1^2$, $Sq^1(x_{2,1})=x_{2,0}^2$, $Sq^2(x_1)=x_{2,0}$, and $Sq^4(x_{2,0})=x_{2,1}$. Then we have the following corollary of Proposition \ref{prop:odd generator}.

\begin{cor}\label{cor:steenrod square for odd generator}
In $H^*(BPU(4);\Zz/2)$, $Sq^1(y_3)=0$, $Sq^1(y_5)=y_3^2$, $Sq^1(y_9)=y_5^2$, $Sq^2(y_3)=y_5$, $Sq^4(y_5)=y_9$ or $y_3^3+y_9$.	
\end{cor}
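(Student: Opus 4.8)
The corollary is a direct consequence of Proposition~\ref{prop:odd generator} together with the Steenrod square structure on $H^*(K(\Zz,3);\Zz/2)$ recorded in Proposition~\ref{prop:mod 2 cohomology of k(3)}. The plan is to transport the Steenrod operations on the generators $x_1$, $x_{2,0}$, $x_{2,1}$ across the ring homomorphism $\chi^*$, using naturality of the Steenrod squares, i.e. that $\chi^*Sq^i = Sq^i\chi^*$. Since Proposition~\ref{prop:odd generator} identifies $\chi^*(x_1)=y_3$ and $\chi^*(x_{2,0})=y_5$ on the nose, each operation computed on a generator of $H^*(K(\Zz,3);\Zz/2)$ pushes forward to the corresponding statement about $y_3$ and $y_5$.

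First I would handle the squares involving only $x_1$ and $x_{2,0}$. Applying $\chi^*$ to the relations $Sq^1(x_1)=0$, $Sq^1(x_{2,0})=x_1^2$, and $Sq^2(x_1)=x_{2,0}$ immediately yields $Sq^1(y_3)=0$, $Sq^1(y_5)=\chi^*(x_1^2)=y_3^2$, and $Sq^2(y_3)=\chi^*(x_{2,0})=y_5$. These require no case analysis because both $x_1$ and $x_{2,0}$ have unambiguous images.

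The two statements involving $y_9$ are where the indeterminacy from Proposition~\ref{prop:odd generator} enters, since there $\chi^*(x_{2,1})$ is only determined to be $y_9$ or $y_3^3+y_9$. For $Sq^1(y_9)$, the point is that the extra term $y_3^3$, if present, contributes $Sq^1(y_3^3)=3y_3^2Sq^1(y_3)=0$ because $Sq^1(y_3)=0$; hence $Sq^1\chi^*(x_{2,1})=\chi^*(x_{2,0}^2)=y_5^2$ gives $Sq^1(y_9)=y_5^2$ regardless of which form $\chi^*(x_{2,1})$ takes. For $Sq^4(y_5)$, I would apply $\chi^*$ to the relation $Sq^4(x_{2,0})=x_{2,1}$ of Proposition~\ref{prop:mod 2 cohomology of k(3)}, obtaining $Sq^4(y_5)=\chi^*(x_{2,1})$, which is exactly $y_9$ or $y_3^3+y_9$ as stated.

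The only mild subtlety, and thus the main point to verify carefully, is precisely this propagation of the ambiguity: one must confirm that the unresolved choice in $\chi^*(x_{2,1})$ does not obstruct the clean statements for $Sq^1(y_9)$ and $Sq^2(y_3)$, and the computation $Sq^1(y_3^3)=0$ above is what makes $Sq^1(y_9)=y_5^2$ hold in either case. Everything else is a routine application of naturality together with the degree-by-degree identification of low-dimensional classes already furnished by Theorem~\ref{thm:toda} and Proposition~\ref{prop:odd generator}.
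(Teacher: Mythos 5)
Your argument is correct and is essentially the paper's own reasoning: the corollary is stated there as an immediate consequence of Proposition~\ref{prop:odd generator} together with the relations $Sq^1(x_1)=0$, $Sq^1(x_{2,0})=x_1^2$, $Sq^1(x_{2,1})=x_{2,0}^2$, $Sq^2(x_1)=x_{2,0}$, $Sq^4(x_{2,0})=x_{2,1}$ from Proposition~\ref{prop:mod 2 cohomology of k(3)}, transported by naturality of $\chi^*$, exactly as you do. Your verification that the indeterminacy in $\chi^*(x_{2,1})$ is harmless for $Sq^1(y_9)$ (because $Sq^1(y_3^3)=0$) is the right point to check; the only tiny slip is your passing mention of $Sq^2(y_3)$ in that connection, which never involves $x_{2,1}$ and so carries no ambiguity in the first place.
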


Before proceeding further, let us review a result  in \cite{Tod87}. Let $PSO(2n)$ be the  quotient group of $SO(2n)$ by the center $\Zz/2=\{\pm 1\}$, and let $\phi:BSO(2n)\to BPSO(2n)$ be the induced map of classifying spaces. It is known that $PU(4)$ is homeomorphic to $PSO(6)$, so there is an induced homomorphism \[\phi^*:H^*(BPU(4);\Zz/2)\to H^*(BSO(6);\Zz/2)\cong\Zz[w_2,\dots,w_6],\]
where $w_i$ is the $i$th universal Stiefel-Whitney class of $BSO(2n)$.
\begin{prop}[{\cite[Proposition 3.7 and 4.5]{Tod87}}]\label{prop:image in BSO(6)}
	$\phi^*(y_2)=0$, $\phi^*(y_3)=w_3$, $\phi^*(y_5)=\bar w_5$, $\phi^*(y_8)=\bar w_4^2+w_3^2 w_2$, $\phi^*(y_{12})=\bar w_6^2+\bar w_5^2 w_2$, where $\bar w_4=w_4+w_2^2$, $\bar w_5=w_5+w_2w_3$, $\bar w_6=w_6+w_2w_4$, and $\phi^*(y_9)\equiv w_3w_6+w_4w_5\mod (w_2)$.
\end{prop}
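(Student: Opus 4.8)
The plan is to work with the central extension $1\to\Zz/2\to SO(6)\to PSO(6)\to 1$, which yields the fibration
\[BSO(6)\xrightarrow{\phi}BPSO(6)\xrightarrow{\delta}K(\Zz/2,2).\]
In the associated mod $2$ Serre spectral sequence $\phi^*$ is the restriction to the fiber, so its image is the subring $E_\infty^{0,*}\subseteq H^*(BSO(6);\Zz/2)=\Zz/2[w_2,\dots,w_6]$ of permanent cycles. The two tools I would combine are: (i) the transgressions of the $w_i$, which determine $E_\infty^{0,*}$; and (ii) naturality of the Steenrod squares together with Wu's formula on $H^*(BSO(6);\Zz/2)$ and the operations already available in Corollary \ref{cor:steenrod square for odd generator}.

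I would start with the low-degree classes. Since $y_2=\delta^*(\iota_2)$ is pulled back from the base and $\delta\circ\phi$ is nullhomotopic, $\phi^*(y_2)=0$. This already forces the first transgression: because $E_\infty^{0,2}=0$ while $d_2$ vanishes on $E_2^{0,2}$ (the target $H^2(K(\Zz/2,2);H^1(BSO(6)))$ is zero), the class $w_2$ must transgress, so $\tau(w_2)=Sq^1\iota_2$, the unique nonzero element of $H^3(K(\Zz/2,2);\Zz/2)$. Consequently $Sq^1\iota_2$ is killed, $E_\infty^{3,0}=0$, and since $\dim_{\Zz/2}H^3(BPSO(6))=1$ by Theorem \ref{thm:toda}, the surviving class in degree $3$ must be $w_3$; hence $\phi^*(y_3)=w_3$. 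Using $y_5=Sq^2 y_3$ and the Wu formula $Sq^2 w_3=w_5+w_2w_3=\bar w_5$ then gives $\phi^*(y_5)=\bar w_5$ by naturality.

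For the remaining classes I would return to the spectral sequence to pin down $E_\infty^{0,*}$ in degrees $8$, $9$, and $12$. Propagating the transgression of $w_2$ by Kudo's theorem, together with the transgressions of $w_3,\dots,w_6$, identifies which $w$-monomials survive — for instance it is $\bar w_4^2$, not $w_4$, that lies in the image — and matching the surviving classes against Toda's generators, using degree, the relation $y_9^2=y_3^2y_{12}+y_5^2y_8$ from Theorem \ref{thm:toda}, and the consistency check $Sq^1\phi^*(y_9)=\bar w_5^2$ coming from $Sq^1 y_9=y_5^2$, should yield $\phi^*(y_8)=\bar w_4^2+w_2w_3^2$, $\phi^*(y_{12})=\bar w_6^2+\bar w_5^2 w_2$, and $\phi^*(y_9)\equiv w_3w_6+w_4w_5\pmod{w_2}$.

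The main obstacle is precisely this last step: the higher differentials of the spectral sequence of the central extension must be computed carefully to determine the surviving even-degree classes and to single out the correct generator-image (distinguishing, e.g., $\phi^*(y_8)$ from the product $\phi^*(y_3)\phi^*(y_5)=w_3\bar w_5$ in the same degree), as well as to resolve the $w_3^3$ indeterminacy in $\phi^*(y_9)$ modulo $w_2$. Here I cannot shortcut via the full Steenrod action of Theorem \ref{thm:steenrod}, since that theorem is proved later using the present proposition; the pre-established operations in Corollary \ref{cor:steenrod square for odd generator} reach only $y_5$ cleanly and leave $Sq^4 y_5$ (hence $\phi^*(y_9)\bmod w_2$) ambiguous, so the transgression computation is unavoidable. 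By contrast, $\phi^*(y_2)$, $\phi^*(y_3)$, and $\phi^*(y_5)$ are essentially formal.
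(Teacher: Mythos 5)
The paper does not actually prove this proposition: it is quoted verbatim from Toda \cite[Propositions 3.7 and 4.5]{Tod87}, so your attempt is an independent reconstruction rather than a variant of an internal argument. Its easy parts are sound: $\phi^*(y_2)=0$ because $y_2$ is pulled back from $K(\Zz/2,2)$; the forced transgression $\tau(w_2)=Sq^1\iota_2$ together with $\dim_{\Zz/2}H^3(BPU(4);\Zz/2)=1$ gives $\phi^*(y_3)=w_3$; and $\phi^*(y_5)=Sq^2(w_3)=\bar w_5$ by naturality, legitimately using Corollary \ref{cor:steenrod square for odd generator} (no circularity there). But for $y_8$, $y_9$, $y_{12}$ --- the entire content of the proposition --- you present only a plan and explicitly defer the computation, so the proof is missing precisely where it matters. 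Moreover the plan is technically misdirected: in this spectral sequence only $w_2$ and its iterated squares are transgressive (Kudo applies to those), whereas ``the transgressions of $w_3,\dots,w_6$'' is not a meaningful input --- $w_3$ survives, and $w_4$ is \emph{not} transgressive: since $\phi^*(y_5)=\bar w_5\neq 0$ forces $E_\infty^{2,3}=0$ in total degree $5$, and $\iota_2 w_3$ supports no differential ($\iota_2$ and $w_3$ are permanent cycles), one is driven to $d_2(w_4)=\iota_2 w_3$. Similarly $d_3(w_5)=Sq^1\iota_2\cdot w_3\neq 0$. The differentials on $w_4,w_5,w_6$ therefore need separate ad hoc arguments, not Kudo's theorem.

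There is also a more decisive gap: even a complete determination of $E_\infty^{0,*}$ cannot finish the argument by ``matching the surviving classes against Toda's generators,'' because the values $\phi^*(y_i)$ are not invariants of the abstract presentation in Theorem \ref{thm:toda}. The substitution $y_9\mapsto y_9+y_3^3$, $y_{12}\mapsto y_{12}+y_3^4$ is a graded ring automorphism preserving every relation of $J$ (indeed $(y_9+y_3^3)^2+y_3^2(y_{12}+y_3^4)+y_5^2y_8=y_9^2+y_3^2y_{12}+y_5^2y_8$ and $y_2(y_9+y_3^3)\in J$), and it also passes all of your proposed tests: it preserves degrees, the quartic relation, and the $Sq^1$ check, since $Sq^1(y_3^3)=0$ and $Sq^1(y_3^4)=0$. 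Hence degree, the relation, and $Sq^1$ cannot distinguish $\phi^*(y_9)\equiv w_3w_6+w_4w_5$ from $w_3w_6+w_4w_5+w_3^3$ modulo $(w_2)$, nor $\phi^*(y_{12})$ from $\phi^*(y_{12})+w_3^4$ --- this is exactly the two-fold ambiguity visible in Proposition \ref{prop:odd generator}, which the paper can only resolve later (Proposition \ref{prop:w_i}) \emph{by using} the present proposition. To prove the statement one must anchor Toda's specific generators to their defining constructions, e.g.\ via $\pi^*(y_8)=c_1c_3+c_2^2$ and $\pi^*(y_{12})=c_1^2c_4+c_3^2+c_1c_2c_3$ (Proposition \ref{prop:toda image pi}) and a comparison of the relevant fibrations, which is essentially what Toda does and what your sketch omits.
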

(See \cite[p. 91]{Tod87} for the formulas of $\bar w_i$, $i=4,5,6$.)
\begin{prop}\label{prop:w_i}
	In Proposition \ref{prop:image in BSO(6)},  $\phi^*(y_9)=w_3w_6+w_4w_5+w_2^2w_5+w_2^3w_3$. Moreover, $Sq^4(y_5)$, i.e. $\chi^*(x_{2,1})$, is actually $y_3^3+y_9$ in Corollary \ref{cor:steenrod square for odd generator}.
\end{prop}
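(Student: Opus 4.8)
The plan is to exploit the naturality of Steenrod operations under the map $\phi^*$ together with the value $\phi^*(y_5)=\bar w_5=w_5+w_2w_3$ from Proposition \ref{prop:image in BSO(6)}. By Proposition \ref{prop:mod 2 cohomology of k(3)} we have $x_{2,1}=Sq^4(x_{2,0})$, and since $\chi^*$ commutes with Steenrod squares, Proposition \ref{prop:odd generator} gives $Sq^4(y_5)=Sq^4\chi^*(x_{2,0})=\chi^*(x_{2,1})$, which is either $y_9$ or $y_3^3+y_9$. Applying the ring map $\phi^*$ and using that it too commutes with $Sq^4$, I would reduce the whole question to computing $Sq^4(\phi^*(y_5))=Sq^4(w_5+w_2w_3)$ inside $H^*(BSO(6);\Zz/2)=\Zz/2[w_2,\dots,w_6]$.

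The key computation is to evaluate $Sq^4(w_5+w_2w_3)$ by the Wu formula and the Cartan formula, keeping in mind that $w_1=0$ and that $w_j=0$ for $j>6$ in $BSO(6)$. The Wu formula gives $Sq^4(w_5)=w_4w_5+w_3w_6$ (all higher terms involving $w_7,w_8,w_9$ vanish), while the Cartan formula applied to $w_2w_3$, together with $Sq^1(w_2)=w_3$, $Sq^2(w_2)=w_2^2$, $Sq^2(w_3)=w_2w_3+w_5$ and $Sq^3(w_3)=w_3^2$, yields $Sq^4(w_2w_3)=w_3^3+w_2^3w_3+w_2^2w_5$. Hence
\[
\phi^*(Sq^4(y_5))=Sq^4(w_5+w_2w_3)=w_3w_6+w_4w_5+w_2^2w_5+w_2^3w_3+w_3^3.
\]

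To finish, I would compare this against the two candidates. Since $\phi^*(y_3)=w_3$, we have $\phi^*(y_3^3)=w_3^3$, so the candidate $Sq^4(y_5)=y_9$ would force $\phi^*(y_9)=w_3w_6+w_4w_5+w_2^2w_5+w_2^3w_3+w_3^3$, whereas $Sq^4(y_5)=y_3^3+y_9$ would force $\phi^*(y_9)=w_3w_6+w_4w_5+w_2^2w_5+w_2^3w_3$. Reducing modulo $w_2$ and invoking the constraint $\phi^*(y_9)\equiv w_3w_6+w_4w_5\pmod{w_2}$ from Proposition \ref{prop:image in BSO(6)}, the first candidate leaves a spurious term $w_3^3\not\equiv 0\pmod{w_2}$, which is impossible; the second is consistent. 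This simultaneously pins down $Sq^4(y_5)=y_3^3+y_9$ and reads off $\phi^*(y_9)=w_3w_6+w_4w_5+w_2^2w_5+w_2^3w_3$, proving both assertions.

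The main obstacle is the bookkeeping in the Wu-formula computation of $Sq^4(w_5+w_2w_3)$: one must correctly discard all classes $w_j$ with $j>6$ and all occurrences of $w_1$, and the decisive $w_3^3$ term—on which the whole argument turns after reduction modulo $w_2$—arises only from the cross term $Sq^1(w_2)\,Sq^3(w_3)$ in the Cartan expansion, so a careless expansion could easily lose exactly the term that distinguishes the two candidates.
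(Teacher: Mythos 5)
Your proposal is correct and follows essentially the same route as the paper: both compute $Sq^4(\bar w_5)=Sq^4(w_5+w_2w_3)=w_3w_6+w_4w_5+w_2^2w_5+w_2^3w_3+w_3^3$ via the Wu and Cartan formulas, then use naturality $\phi^*Sq^4(y_5)=Sq^4\phi^*(y_5)$ together with the constraint $\phi^*(y_9)\equiv w_3w_6+w_4w_5 \bmod (w_2)$ from Proposition \ref{prop:image in BSO(6)} to rule out the candidate $Sq^4(y_5)=y_9$ and simultaneously determine $\phi^*(y_9)$. Your identification of the cross term $Sq^1(w_2)Sq^3(w_3)=w_3^3$ as the decisive contribution matches the paper's computation exactly.
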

\begin{proof}
Applying the Wu formula for Stiefel-Whitney classes:
\[Sq^i(w_k)=\sum_{j=0}^i\binom{k-j-1}{i-j}w_{k+i-j}w_{j},\]
 and the Cartan formula for Steenrod squares we get 
\[\begin{split}
Sq^4(\bar w_5)&=Sq^4(w_5)+Sq^4(w_2w_3)\\
&=w_3w_6+w_4w_5+Sq^1(w_2)Sq^3(w_3)+Sq^2(w_2)Sq^2(w_3)\\
&=w_3w_6+w_4w_5+w_3^3+w_2^2w_5+w_2^3w_3.
\end{split}
\]
By Corollary \ref{cor:steenrod square for odd generator} and Proposition \ref{prop:image in BSO(6)}, $Sq^4(\bar w_5)=Sq^4\phi^*(y_5)=\phi^*Sq^4(y_5)=\phi^*(y_9)$ or $\phi^*(y_9)+w_3^3$, where $\phi^*(y_9)\equiv w_3w_6+w_4w_5$ mod $(w_2)$. Combining this with the above expression for $Sq^4(\bar w_5)$, we obtain $\phi^*(y_9)=w_3w_6+w_4w_5+w_2^2w_5+w_2^3w_3$. 
This computation also shows that $Sq^4(y_5)=y_3^3+y_9$, which is the second statement.
\end{proof}

Let $\pi:BU(4)\to BPU(4)$ be the map in \eqref{eq:map pi}. The following result  in \cite{Tod87} gives the induced mod $2$ cohomology ring homomorphism $\pi^*$.
\begin{prop}[{\cite[(4.11)]{Tod87}}]\label{prop:toda image pi}
In $H^*(BU(4);\Zz/2)$,	$\pi^*(y_2)=c_1$, $\pi^*(y_8)=c_1c_3+c_2^2$,  $\pi^*(y_{12})=c_1^2c_4+c_3^2+c_1c_2c_3$.
\end{prop}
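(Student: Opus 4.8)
The plan is to read off $\pi^*$ on the generators by reducing the integral classes of Proposition~\ref{prop:K_4} modulo $2$, using two inputs: that $\chi\circ\pi$ is null in the fibration \eqref{eq:map chi}, so $\pi^*\chi^*=0$; and that there are classes $\bar\alpha_i\in H^{2i}(BPU(4);\Zz)$ with $\pi^*(\bar\alpha_i)=\alpha_i$ for $i=2,3,4,6$. I first record the vanishing on the generators pulled back from $K(\Zz,3)$: by Propositions~\ref{prop:odd generator} and~\ref{prop:w_i} we have $y_3=\chi^*(x_1)$, $y_5=\chi^*(x_{2,0})$ and $y_3^3+y_9=\chi^*(x_{2,1})$, so $\pi^*\chi^*=0$ gives $\pi^*(y_3)=\pi^*(y_5)=\pi^*(y_9)=0$. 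In particular every monomial in the $y_i$ that involves $y_3$, $y_5$ or $y_9$ lies in $\ker\pi^*$.

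For $y_2$ I reduce $\bar\alpha_2$: by \eqref{eq:alpha}, $\rho(\alpha_2)=c_1^2$, so $\pi^*\rho(\bar\alpha_2)=\rho\,\pi^*(\bar\alpha_2)=c_1^2\neq0$. Hence $\rho(\bar\alpha_2)=y_2^2$, the only nonzero class in $H^4(BPU(4);\Zz/2)=\Zz/2\{y_2^2\}$, and $\pi^*(y_2)^2=\pi^*(y_2^2)=c_1^2$. Since $\pi^*(y_2)\in H^2(BU(4);\Zz/2)=\Zz/2\{c_1\}$ is nonzero, $\pi^*(y_2)=c_1$.

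The classes $y_8$ and $y_{12}$ are treated the same way in degrees $8$ and $12$. Reducing \eqref{eq:alpha} modulo $2$ (cf. the proof of Proposition~\ref{prop:order}) gives $\rho(\alpha_4)=c_1c_3+c_2^2$ and $\rho(\alpha_6)=c_1^2c_4+c_3^2+c_1c_2c_3$, so
\[
\pi^*\rho(\bar\alpha_4)=c_1c_3+c_2^2,\qquad \pi^*\rho(\bar\alpha_6)=c_1^2c_4+c_3^2+c_1c_2c_3.
\]
Using the relations of Theorem~\ref{thm:toda} one checks that $H^8(BPU(4);\Zz/2)=\Zz/2\{y_8,\,y_2^4,\,y_3y_5\}$; since $\pi^*(y_2^4)=c_1^4$, $\pi^*(y_3y_5)=0$ and $c_1c_3+c_2^2\notin\Zz/2[c_1]$, the $y_8$-coefficient of $\rho(\bar\alpha_4)$ is forced to be $1$, whence $\pi^*(y_8)=c_1c_3+c_2^2+\mu c_1^4$ for some $\mu\in\Zz/2$. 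The same analysis in degree $12$ gives $\pi^*(y_{12})=c_1^2c_4+c_3^2+c_1c_2c_3$ modulo $\Zz/2\{c_1^6,\,c_1^2(c_1c_3+c_2^2)\}$; the point is that $c_4$ enters the image of $\pi^*$ only through $y_{12}$ (because $\pi^*(y_2)$ and $\pi^*(y_8)$ are free of $c_4$), so the $y_{12}$-coefficient of $\rho(\bar\alpha_6)$ must be $1$.

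The main obstacle is exactly this residual indeterminacy. Toda's generators $y_8$, $y_{12}$ are pinned down only up to decomposables that leave the relation ideal $J$ unchanged—for instance $y_8\mapsto y_8+y_2^4$ preserves $J$ since $y_2y_5=0$ forces $y_5^2y_2^4=0$—and such changes shift $\pi^*(y_8)$ by $c_1^4$ and $\pi^*(y_{12})$ by $c_1^6$ or $c_1^2(c_1c_3+c_2^2)$, precisely the ambiguous terms above. To obtain Toda's formulas on the nose I would fix the normalization of $y_8$ and $y_{12}$ by their images under $\phi^*$ in Proposition~\ref{prop:image in BSO(6)} and verify that with this choice the spurious powers of $c_1$ drop out; equivalently, since these modifications affect no other statement, one may simply choose $y_8$ and $y_{12}$ so that $\mu=0$ and the corresponding degree-$12$ correction vanishes.
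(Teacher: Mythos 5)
This proposition is quoted directly from Toda \cite[(4.11)]{Tod87}: the paper offers no proof of it, so your attempt is a reconstruction from the paper's other ingredients, and most of it is sound. The identity $\pi^*\chi^*=0$ does give $\pi^*(y_3)=\pi^*(y_5)=\pi^*(y_9)=0$ (using Propositions \ref{prop:odd generator} and \ref{prop:w_i}, which are independent of the present statement, so there is no circularity); the class $\bar\alpha_2$ from Proposition \ref{prop:K_4} forces $\pi^*(y_2)=c_1$; and reducing $\bar\alpha_4$, $\bar\alpha_6$ modulo $2$ correctly determines $\pi^*(y_8)$ and $\pi^*(y_{12})$ up to the $c_1$-decomposable terms you identify ($c_1^4$ in degree $8$; $c_1^6$ and $c_1^2(c_1c_3+c_2^2)$ in degree $12$, coming from the basis elements $y_2^4$, respectively $y_2^6$ and $y_2^2y_8$).

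The genuine gap is your final normalization step. You propose to pin down $y_8$ and $y_{12}$ ``by their images under $\phi^*$'' and then verify that the spurious $c_1$-powers drop out; but $\phi^*(y_2)=0$ by Proposition \ref{prop:image in BSO(6)}, so every ambiguous correction term ($y_2^4$, $y_2^6$, $y_2^2y_8$) is annihilated by $\phi^*$ — the $\phi^*$-normalization is blind to exactly the indeterminacy you need to resolve, and the promised verification cannot be carried out. The map $\bar\Delta^*$ is equally useless here: since $\Delta^*(c_1)\equiv\Delta^*(c_3)\equiv 0 \bmod 2$ by \eqref{eq:chern class}, one gets $\Delta^*\pi^*(y_8)=c_1'^4$ whatever $\mu$ is. Your fallback — redefine $y_8\mapsto y_8+\mu y_2^4$ and adjust $y_{12}$ similarly — does produce generators satisfying the displayed formulas, and it happens to be compatible with everything else the paper imports from Toda (the ideal $J$ is preserved because $y_2y_3=y_2y_5=y_2y_9=0$, and the formulas of Proposition \ref{prop:image in BSO(6)} are preserved because $\phi^*(y_2)=0$); but you assert this invariance instead of checking it, and what you then prove is a statement about re-chosen generators, not Toda's formula for Toda's classes, which is what the paper cites and uses jointly with Proposition \ref{prop:image in BSO(6)} in later arguments. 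Determining that $\mu=0$ for Toda's actual generators requires information not reproducible from this paper's toolkit — it comes from Toda's explicit construction of $y_8$, $y_{12}$ via the covering $SU(4)\to SO(6)$ — which is precisely why the paper cites \cite{Tod87} rather than arguing as you do.
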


Let $\Delta:BU(2)\to BU(4)$ and $\bar\Delta:BSO(3)\cong BPU(2)\to BPU(4)$ be the maps defined in the proof of Proposition \ref{prop:nonzero}, and to avoid ambiguity of notations, let $w_i'$  denote the $i$th universal Stiefel-Whitney class of $BSO(3)$.

\begin{prop}\label{prop:even generator}
$Sq^1(y_2)=0$, $Sq^1(y_8)=y_3^3$, $Sq^1(y_{12})=y_3y_5^2$, and $\bar\Delta^*(y_2)=0$, $\bar\Delta^*(y_3)=w_3'$, $\bar\Delta^*(y_5)=w_2'w_3'$, $\bar\Delta^*(y_8)=w_2'^4+w_2'w_3'^2$, $\bar\Delta^*(y_9)=w_2'^3w_3'$, $\bar\Delta^*(y_{12})=w_2'^3w_3'^2$.	
\end{prop}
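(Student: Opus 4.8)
The plan is to detect each of the classes in the two computable targets of $H^*(BPU(4);\Zz/2)$ — the pullback $\pi^*$ to $H^*(BU(4);\Zz/2)=\Zz/2[c_1,\dots,c_4]$ (Proposition \ref{prop:toda image pi}) and the pullback $\phi^*$ to $H^*(BSO(6);\Zz/2)=\Zz/2[w_2,\dots,w_6]$ (Propositions \ref{prop:image in BSO(6)} and \ref{prop:w_i}) — and to exploit the two commutative squares of diagram \eqref{diag:diagonal}. The left square gives $\pi_2^*\circ\bar\Delta^*=\Delta^*\circ\pi^*$, and the right square gives $\bar\Delta^*(\chi^*\xi)=\chi^*\xi$ for $\xi\in H^*(K(\Zz,3);\Zz/2)$, the two $\chi^*$ denoting the pullbacks to $BPU(4)$ and to $BPU(2)$. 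The Steenrod operations will be read off in $H^*(BSO(6);\Zz/2)$, where the Wu formula is available, and the six values of $\bar\Delta^*$ will be assembled from the two squares, with the leftover coefficients fixed by the relations already in hand.

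For the $Sq^1$'s I would push forward by $\phi^*$ and use $w_1=0$ on $BSO(6)$, so that $Sq^1(w_2)=w_3$ while $Sq^1$ annihilates squares; a short Cartan-formula computation then gives $\phi^*Sq^1(y_2)=0$, $\phi^*Sq^1(y_8)=w_3^3=\phi^*(y_3^3)$, and $\phi^*Sq^1(y_{12})=\bar w_5^2w_3=\phi^*(y_3y_5^2)$. To conclude I need $\phi^*$ injective in degrees $3,9,13$. This I verify directly: the monomial bases of $H^3$, $H^9$, $H^{13}$ modulo the ideal $J$ of Theorem \ref{thm:toda} are $\{y_3\}$, $\{y_3^3,y_9\}$, $\{y_5y_8,y_3y_5^2\}$, and their $\phi^*$-images are linearly independent by inspection of leading monomials (for instance $w_3^3$ versus $w_3w_6+\cdots$ in degree $9$, and $w_3w_5^2$ occurring only in $\phi^*(y_3y_5^2)$ in degree $13$). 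Thus $Sq^1(y_2)=0$, $Sq^1(y_8)=y_3^3$, $Sq^1(y_{12})=y_3y_5^2$.

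For $\bar\Delta^*$ I would begin with the right square. Since $y_3=\chi^*(x_1)$ and $y_5=\chi^*(x_{2,0})$ (Proposition \ref{prop:odd generator}), Theorem \ref{thm:hom mod 2} gives $\bar\Delta^*(y_3)=w_3'$ and $\bar\Delta^*(y_5)=w_2'w_3'$; then $Sq^4(y_5)=y_3^3+y_9$ (Proposition \ref{prop:w_i}) together with the computation $Sq^4(w_2'w_3')=w_3'^3+w_2'^3w_3'$ in $\Zz/2[w_2',w_3']$ yields $\bar\Delta^*(y_9)=w_2'^3w_3'$. For the even classes I use the left square and \eqref{eq:chern class}: mod $2$ one has $\Delta^*(c_1)\equiv\Delta^*(c_3)\equiv 0$, $\Delta^*(c_2)\equiv c_1'^2$, $\Delta^*(c_4)\equiv c_2'^2$, so $\Delta^*\pi^*(y_2)=0$, $\Delta^*\pi^*(y_8)=c_1'^4$, $\Delta^*\pi^*(y_{12})=0$. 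Using $\pi_2^*(w_2')=c_1'$ (from $\rho(p_1)=w_2'^2$ and $\pi^*(p_1)=\pm(c_1'^2-4c_2')$) and $\pi_2^*(w_3')=Sq^1(c_1')=0$, these identities determine $\bar\Delta^*$ on $y_2,y_8,y_{12}$ only modulo the ideal $(w_3')$: they give $\bar\Delta^*(y_2)=0$ outright (as $\pi_2^*$ is injective on $H^2$), and $\bar\Delta^*(y_8)=w_2'^4+\epsilon\,w_2'w_3'^2$, $\bar\Delta^*(y_{12})=a\,w_2'^3w_3'^2+b\,w_3'^4$.

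The main obstacle is precisely these undetermined $w_3'$-coefficients, which are invisible to $\pi_2^*$. I would remove them by bootstrapping with the identities just proved: applying $\bar\Delta^*$ to $Sq^1(y_8)=y_3^3$ forces $\epsilon=1$, and to $Sq^1(y_{12})=y_3y_5^2$ forces $a=1$. The last coefficient $b$ is then fixed by substituting the now-known values of $\bar\Delta^*(y_3),\bar\Delta^*(y_5),\bar\Delta^*(y_8),\bar\Delta^*(y_9)$ into the defining relation $y_9^2+y_3^2y_{12}+y_5^2y_8=0$ of Theorem \ref{thm:toda}; its image under $\bar\Delta^*$ collapses to $b\,w_3'^6=0$, giving $b=0$. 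As a cleaner alternative that bypasses the bootstrap, one may note that the diagonal $PU(2)\to PU(4)$ lifts through $SO(6)\to PSO(6)$ to the inclusion $SO(3)\hookrightarrow SO(6)$ classifying $V_3\oplus\underline{\Rr}^{\,3}$ (via $SU(4)\cong Spin(6)$, restricting the vector representation, the central $-I$ mapping to the identity of $SO(6)$). The resulting lift $\theta\colon BSO(3)\to BSO(6)$ satisfies $\theta^*(w_2)=w_2'$, $\theta^*(w_3)=w_3'$, and $\theta^*(w_i)=0$ for $i\ge 4$, so $\bar\Delta^*=\theta^*\circ\phi^*$ reproduces all six values simultaneously and confirms the bootstrapped answers.
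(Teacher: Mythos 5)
Your main argument is correct and is essentially the paper's own proof: $\bar\Delta^*(y_3)$ and $\bar\Delta^*(y_5)$ from the right square of \eqref{diag:diagonal} together with Theorem \ref{thm:hom mod 2}, $\bar\Delta^*(y_9)$ from $Sq^4(y_5)=y_3^3+y_9$ and $Sq^4(w_2'w_3')=w_2'^3w_3'+w_3'^3$, the even classes pinned down modulo $(w_3')$ by $\Delta^*\pi^*$ via \eqref{eq:chern class}, and the residual coefficients fixed by $Sq^1$-naturality and Toda's relation $y_9^2+y_3^2y_{12}+y_5^2y_8=0$. The only real deviations are cosmetic: you fix the two coefficients of $\bar\Delta^*(y_8)$ in the opposite order; the paper extracts $\bar\Delta^*(y_{12})$ in one step from the relation (legitimate, since $\Zz/2[w_2',w_3']$ is a domain and one can cancel $w_3'^2$), whereas you first bootstrap $a=1$ through $Sq^1(y_{12})$; and for the $Sq^1$ formulas you verify injectivity of $\phi^*$ on $H^3$, $H^9$, $H^{13}$ outright (your independence checks are correct), where the paper argues slightly differently in degree $13$, using that $Sq^1(y_{12})$ lies in $\ker Sq^1$ while $Sq^1(y_5y_8)\neq 0$. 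Your closing alternative is a genuinely different and attractive route: the lift claim is sound, since the diagonal $PU(2)\to PU(4)$ is covered by $SU(2)\hookrightarrow SU(4)\cong Spin(6)$, and $\Lambda^2\Cc^4$ restricted to the diagonal $SU(2)$ is $S^2\Cc^2\oplus\Cc^3$, so $-I_2$ acts trivially and the vector representation descends to $SO(3)\to SO(6)$ classifying $V_3\oplus\underline{\Rr}^3$; then $\theta^*(w_2)=w_2'$, $\theta^*(w_3)=w_3'$, $\theta^*(w_i)=0$ for $i\geq 4$ indeed reproduces all six values of $\bar\Delta^*=\theta^*\circ\phi^*$ from Proposition \ref{prop:image in BSO(6)} in one stroke, with no undetermined coefficients. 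What it buys is uniformity and a check on Proposition \ref{prop:w_i}; what it costs is the representation-theoretic verification that the lift is compatible with the identification $PU(4)\cong PSO(6)$ underlying Toda's $\phi^*$ — which holds because $\phi$ is by definition induced by $SO(6)\to PSO(6)$ under that same identification.
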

\begin{proof}
	Since $H^3(BPU(4);\Zz/2)\cong \Zz/2\{y_3\}$, the equation $Sq^1(y_2)=0$ follows  from the fact that $\phi^*Sq^1(y_2)=Sq^1\phi^*(y_2)=0\neq w_3=\phi^*(y_3)$. Let $\pi$ also denote the map $BU(2)\to BPU(2)$.
	Then $\pi^*\bar\Delta^*(y_2)=\Delta^*\pi^*(y_2)=\Delta^*(c_1)=0$ by Proposition \ref{prop:toda image pi} and \eqref{eq:chern class}. This gives $\bar\Delta^*(y_2)=0$ since $H^2(BPU(2);\Zz/2)\cong\Zz/2\{w_2'\}$ and $\pi^*(w_2')=c_1'\neq 0$.
	
	By Theorem \ref{thm:hom mod 2}, $\bar\Delta^*(y_3)=\bar\Delta^*\chi^*(x_1)=w_3'$, and $\bar\Delta^*(y_5)=\bar\Delta^*\chi^*(x_{2,0})=w_2'w_3'$. $\bar\Delta^*(y_9)=\bar\Delta^*(Sq^4(y_5)+y_3^3)=w_2'^3w_3'$ comes from the the second statement of Proposition \ref{prop:w_i} and the fact that $\bar\Delta^*Sq^4(y_5)=Sq^4\bar\Delta^*(y_5)=Sq^4(w_2'w_3')=w_2'^3w_3'+w_3'^3$.
	
We see from Proposition \ref{prop:image in BSO(6)} that $Sq^1\phi^*(y_8)=w_3^3=\phi^*(y_3^3)$. Hence $Sq^1(y_8)=y_3^3$ since $H^9(BPU(4);\Zz/2)=\Zz/2\{y_3^3,y_9\}$ and $\phi^*(y_9)\neq 0$.
	 Recall from Theorem \ref{thm:SO3} that $H^8(BPU(2);\Zz/2)\cong\Zz/2\{w_2'^4,w_2'w_3'^2\}$. Hence $\bar\Delta^*(y_8)=w_2'w_3'^2+aw_2'^4$, $a\in\Zz/2$, since $Sq^1\bar\Delta^*(y_8)=\bar\Delta^*Sq^1(y_8)=w_3'^3$.
	Furthermore, by Proposition \ref{prop:toda image pi}, $\pi^*(y_8)=c_1c_3+c_2^2\in H^*(BU(4);\Zz/2)$, and then $\pi^*\bar\Delta^*(y_8)=\Delta^*\pi^*(y_8)=c_1'^4$ by \eqref{eq:chern class}, which implies that the coefficient $a$ is $1$ since $\pi^*(w_3')=0$.
	
	Similarly, we have $Sq^1(y_{12})=ay_3y_5^2+by_5y_8\neq 0$ in $H^{13}(BPU(4);\Zz/2)\cong\Zz/2\{y_3y_5^2,\,y_5y_8\}$. Since $Sq^1(y_3y_5^2)=0$ and $Sq^1(y_5y_8)=y_3^2y_8+y_5y_3^3\neq 0$, it follows that $Sq^1(y_{12})=y_3y_5^2$.
	The relation $y_9^2+y_3^2y_{12}+y_5^2y_8=0$ in $H^*(BPU(4);\Zz/2)$ shows that
	\[\begin{split}
	&\bar\Delta^*(y_9^2+y_3^2y_{12}+y_5^2y_8)\\
	=&(w_2'^3w_3')^2+w_3'^2\bar\Delta^*(y_{12})+(w_2'w_3')^2(w_2'^4+w_2'w_3'^2)=0.
	\end{split}\]
	 Solving this equation we obtain $\bar\Delta^*(y_{12})=w_2'^3w_3'^2$.
\end{proof}

Let $\alpha_i\in H^*(BU(4);\Zz)$ and $\bar\alpha_i\in H^*(BPU(4);\Zz)$, $i=2,3,4,6$, be as in Section \ref{sec:^UE}, and let $\rho:H^*(-;\Zz)\to H^*(-;\Zz/2)$ denote the mod $2$ reduction map.

\begin{prop}\label{prop:Z_2 alpha_i}
 $\rho(\bar\alpha_2)=y_2^2$, $\rho(\bar\alpha_3)=y_2^3$, $\rho(\bar\alpha_4)=y_8+y_3y_5$, $\rho(\bar\alpha_6)=y_{12}+y_3y_9$.
\end{prop}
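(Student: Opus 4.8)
The plan is to pin down each $\rho(\bar\alpha_i)$ by playing off the two ring homomorphisms $\pi^*$ and $\bar\Delta^*$ out of $H^*(BPU(4);\Zz/2)$, using that $\rho$ is natural and hence commutes with both. First I would combine the defining property $\pi^*(\bar\alpha_i)=\alpha_i$ with $\pi^*\rho=\rho\pi^*$ to get $\pi^*\rho(\bar\alpha_i)=\rho(\alpha_i)$, where $\rho(\alpha_i)\in H^*(BU(4);\Zz/2)\cong\Zz/2[c_1,\dots,c_4]$ is read off directly from \eqref{eq:alpha} by reducing coefficients mod $2$: $\rho(\alpha_2)=c_1^2$, $\rho(\alpha_3)=c_1^3$, $\rho(\alpha_4)=c_1c_3+c_2^2$, and $\rho(\alpha_6)=c_1^2c_4+c_3^2+c_1c_2c_3$. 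Comparing with Proposition \ref{prop:toda image pi}, which gives $\pi^*(y_2)=c_1$, $\pi^*(y_8)=c_1c_3+c_2^2$, and $\pi^*(y_{12})=c_1^2c_4+c_3^2+c_1c_2c_3$, I would single out the ``even'' candidates $y_2^2,\,y_2^3,\,y_8,\,y_{12}$ as having the correct $\pi^*$-images. The decisive structural fact is that $H^*(BU(4);\Zz/2)$ is concentrated in even degrees, so $\pi^*$ annihilates every odd-degree generator $y_3,y_5,y_9$, and hence every monomial divisible by one of them.

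Because of this, $\rho(\bar\alpha_i)$ is determined only modulo $\ker\pi^*$, so the next step is to compute $\ker\pi^*$ in degrees $4,6,8,12$. Using Toda's presentation in Theorem \ref{thm:toda}, in particular the relations $y_2y_3=y_2y_5=y_2y_9=0$, to write down a monomial basis, and checking that the $\pi^*$-images of the non-kernel monomials are linearly independent in $\Zz/2[c_1,\dots,c_4]$, I expect $\ker\pi^*$ to be spanned by $0$ in degree $4$, by $y_3^2$ in degree $6$, by $y_3y_5$ in degree $8$, and by $\{y_3y_9,\,y_3^4\}$ in degree $12$. This already forces $\rho(\bar\alpha_2)=y_2^2$, and leaves $\rho(\bar\alpha_3)=y_2^3+c\,y_3^2$, $\rho(\bar\alpha_4)=y_8+\epsilon\,y_3y_5$, and $\rho(\bar\alpha_6)=y_{12}+a\,y_3y_9+b\,y_3^4$ with undetermined coefficients $c,\epsilon,a,b\in\Zz/2$.

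To fix these coefficients I would apply $\bar\Delta^*\colon H^*(BPU(4);\Zz/2)\to\Zz/2[w_2',w_3']$, feeding in the values of Proposition \ref{prop:even generator} and the constraints of Proposition \ref{prop:alpha_i}. Since $\bar\Delta^*\rho=\rho\bar\Delta^*$, the relations $\bar\Delta^*(\bar\alpha_3),\bar\Delta^*(\bar\alpha_6)\equiv0$ and $\bar\Delta^*(\bar\alpha_4)=p_1^2$ (whose reduction is $w_2'^4$ by Theorem \ref{thm:SO3}) become linear equations in $c,\epsilon,a,b$ after substituting $\bar\Delta^*(y_2)=0$, $\bar\Delta^*(y_3y_5)=w_2'w_3'^2$, $\bar\Delta^*(y_8)=w_2'^4+w_2'w_3'^2$, $\bar\Delta^*(y_{12})=w_2'^3w_3'^2$, $\bar\Delta^*(y_3y_9)=w_2'^3w_3'^2$, and $\bar\Delta^*(y_3^4)=w_3'^4$. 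Matching coefficients of the linearly independent monomials $w_2'^3w_3'^2,w_3'^4,w_2'^4,w_2'w_3'^2,w_3'^2$ then yields $c=0$, $\epsilon=1$, $a=1$, $b=0$, which is exactly the assertion.

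The main obstacle is the bookkeeping hidden in the second step together with the need for a transverse detecting map: $\pi^*$ cannot see any of the odd-generator corrections since it kills them identically, so all the content lies in computing $\ker\pi^*$ correctly modulo the ideal $J$ and then in having $\bar\Delta^*$ separate the surviving ambiguities. The reason the pair $(\pi^*,\bar\Delta^*)$ suffices is that $\bar\Delta^*$ distinguishes $y_3y_9$ from $y_3^4$ and $y_3y_5$ from $y_8$ at the level of $w_2',w_3'$-monomials, while $\pi^*$ pins down the even-generator part; neither homomorphism is injective on the relevant graded pieces, but together they are.
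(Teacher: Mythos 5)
Your proposal is correct and takes essentially the same route as the paper: it pins down $\rho(\bar\alpha_i)$ modulo $\ker\pi^*$ using the mod $2$ reductions from \eqref{eq:alpha} together with Proposition \ref{prop:toda image pi}, and then fixes the surviving coefficients by applying $\bar\Delta^*$ with the constraints of Proposition \ref{prop:alpha_i} and the values from Proposition \ref{prop:even generator}. The only difference is organizational---you compute $\ker\pi^*$ degreewise up front (correctly, including the span $\{y_3^4,\,y_3y_9\}$ in degree $12$) where the paper argues per element---and your linear algebra over the monomial bases checks out.
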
 

\begin{proof}
By \eqref{eq:alpha}, $\alpha_2\equiv c_1^2$, $\alpha_3\equiv c_1^3$, $\alpha_4\equiv c_1c_3+c_2^2$, $\alpha_6\equiv c_1^2c_4+c_3^2+c_1c_2c_3$ mod $2$. %Recall that $\sigma_i$ corresponds to $c_i$ in $H^*(BU(4);\Zz)$. 
By Proposition \ref{prop:toda image pi}, this means that $\pi^*\rho(\bar\alpha_2)=\rho(\alpha_2)=\pi^*(y_2^2)$, $\pi^*\rho(\bar\alpha_3)=\rho(\alpha_3)=\pi^*(y_2^3)$, $\pi^*\rho(\bar\alpha_4)=\rho(\alpha_4)=\pi^*(y_8)$, $\pi^*\rho(\bar\alpha_6)=\rho(\alpha_6)=\pi^*(y_{12})$ in $H^*(BU(4);\Zz/2)$. 

$\rho(\bar\alpha_2)=y_2^2$ is obvious since $y_2^2$ is the only nonzero element of $H^4(BPU(4);\Zz/2)$. $\rho(\bar\alpha_3)=y_2^3$ follows from the facts that  $\bar\Delta^*\rho(\bar\alpha_3)=0$ by Proposition \ref{prop:alpha_i} and  that $H^6(BPU(4);\Zz/2)\cong\Zz/2\{y_2^3,y_3^2\}$, $\bar\Delta^*(y_2)=0$, $\bar\Delta^*(y_3^2)=w_3'^2\neq 0$ by Proposition \ref{prop:even generator}. 

Proposition \ref{prop:alpha_i} also gives $\bar\Delta^*\rho(\bar\alpha_4)=w_2'^4$ because $\rho(p_1)=w_2'^2$.
Notice that $H^8(BPU(4);\Zz/2)\cong\Zz/2\{y_2^4,y_3y_5,y_8\}$. Hence from $\pi^*(y_2^4)=c_1^4$, $\pi^*\rho(\bar\alpha_4)=\pi^*(y_8)$, and  $\bar\Delta^*(y_3y_5)=w_2'w_3'^2$, $\bar\Delta^*(y_8)=w_2'^4+w_2'w_3'^2$ by Proposition \ref{prop:even generator}, it follows that $\rho(\bar\alpha_4)=y_8+y_3y_5$.

The fact that 
 $\pi^*\rho(\bar\alpha_6)=\pi^*(y_{12})=c_1^2c_4+c_3^2+c_1c_2c_3$ shows that $\rho(\bar\alpha_6)=y_{12}+ay_3y_9+by_3^4$, $a,b\in\Zz/2$, since 
 $H^{12}(BPU(4);\Zz/2)\cong\Zz/2\{y_2^6,y_3^4,y_3y_9,y_{12}\}$ and $\pi^*(y_2^6)=c_1^6$. 
Also, we have $\bar\Delta^*\rho(\bar\alpha_6)=0$ by Proposition  \ref{prop:alpha_i}, and $\bar\Delta^*(y_3^4)=w_3'^4$, $\bar\Delta^*(y_3y_9)=w_2'^3w_3'^2$, $\bar\Delta^*(y_{12})=w_2'^3w_3'^2$ by Proposition \ref{prop:even generator}. These together give the coefficients $a=1$ and $b=0$. The proof is completed.
\end{proof}
  
\section{Proof of Theorem \ref{thm:steenrod}}
The strategy of the proof is to compute the action of steenrod squares on the images of the generators $y_i$ under the map $\pi^*\oplus\phi^*\oplus\bar\Delta^*:$
\[H^*(BPU(4);\Zz/2)\to H^*(BU(4);\Zz/2)\oplus H^*(BSO(6);\Zz/2)\oplus H^*(BPU(2);\Zz/2).\]
As in Section \ref{sec:on the mod 2 cohomology}, we use $w_i'$ and $w_i$ to denote the $i$th universal Stiefel-Whitney class of $BSO(3)$ and $BSO(6)$ respectively.

First we consider $Sq^i$ on $\im\pi^*$.
\begin{prop}\label{prop:steenrod pi}
$Sq^2\pi^*(y_8)=0$, $Sq^2\pi^*(y_{12})=\pi^*(y_2y_{12})$, $Sq^4\pi^*(y_8)=\pi^*(y_2^2y_8+y_{12})$, $Sq^4\pi^*(y_{12})=\pi^*(y_2^2y_{12})$, $Sq^8\pi^*(y_{12})=\pi^*(y_8y_{12})$.
\end{prop}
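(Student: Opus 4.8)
The plan is to compute each Steenrod square directly in $H^*(BU(4);\Zz/2)\cong\Zz/2[c_1,c_2,c_3,c_4]$, where the action of $Sq^i$ on Chern classes is entirely determined by the Wu formula together with the Cartan formula, and then recognize the resulting polynomial as $\pi^*$ of an explicit element of $H^*(BPU(4);\Zz/2)$. By Proposition \ref{prop:toda image pi} we have the concrete expressions $\pi^*(y_8)=c_1c_3+c_2^2$ and $\pi^*(y_{12})=c_1^2c_4+c_3^2+c_1c_2c_3$, so the left-hand sides are honest polynomials in the $c_i$ to which I can apply the squaring operations mechanically.

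First I would record the Wu formula $Sq^i(c_k)=\sum_{j=0}^i\binom{k-j-1}{i-j}c_{k+i-j}c_j$ (with $c_0=1$ and $c_m=0$ for $m>4$), which gives in low degree the needed values such as $Sq^1(c_1)=c_1^2$, $Sq^1(c_2)=c_1c_2+c_3$, $Sq^2(c_2)=c_2^2+\dots$, and so on. Then, applying the Cartan formula to each monomial in $\pi^*(y_8)$ and $\pi^*(y_{12})$, I would expand $Sq^2\pi^*(y_8)$, $Sq^2\pi^*(y_{12})$, $Sq^4\pi^*(y_8)$, $Sq^4\pi^*(y_{12})$, and $Sq^8\pi^*(y_{12})$ as polynomials in $c_1,\dots,c_4$. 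The degree-by-degree bookkeeping is routine but must be done carefully because of the many cross terms; the only genuinely delicate squares are $Sq^4$ and $Sq^8$, where the binomial coefficients mod $2$ produce several contributing terms.

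The final step is the identification: for each computed polynomial I would match it against the appropriate $\pi^*$-image of a product of generators, again using Proposition \ref{prop:toda image pi} and the obvious $\pi^*(y_2)=c_1$. For instance, to confirm $Sq^4\pi^*(y_8)=\pi^*(y_2^2y_8+y_{12})$ I would compute $\pi^*(y_2^2y_8+y_{12})=c_1^2(c_1c_3+c_2^2)+(c_1^2c_4+c_3^2+c_1c_2c_3)$ and check it equals $Sq^4(c_1c_3+c_2^2)$. Since these are equalities in the polynomial ring $\Zz/2[c_1,\dots,c_4]$, no ambiguity from the kernel of $\pi^*$ arises once both sides are written out.

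I expect the main obstacle to be purely computational: keeping the mod-$2$ binomial coefficients and the Cartan expansions of high squares like $Sq^8$ organized so that no term is dropped or double-counted. There is no conceptual difficulty, since $\pi^*$ lands in a genuine polynomial ring and every operation is dictated by the Wu and Cartan formulas; the risk lies entirely in arithmetic errors during the expansion of the fourth and eighth squares, so I would cross-check total degrees and, where possible, verify internal consistency (for example, that $Sq^1$ of the computed $Sq^2$-values agrees with the Adem relation $Sq^1Sq^2=Sq^3$).
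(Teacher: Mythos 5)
Your proposal follows essentially the same route as the paper: apply the Wu formula for mod $2$ Chern classes together with the Cartan formula to the explicit polynomials $\pi^*(y_8)=c_1c_3+c_2^2$ and $\pi^*(y_{12})=c_1^2c_4+c_3^2+c_1c_2c_3$, then match the results against $\pi^*$-images using Proposition \ref{prop:toda image pi}; this is exactly the paper's proof of Proposition \ref{prop:steenrod pi}. One correction before you carry it out: the Wu formula for Chern classes reads $Sq^{2i}(c_k)=\sum_{j=0}^i\binom{k-j-1}{i-j}c_{k+i-j}c_j$ (even squares only), and since $H^*(BU(4);\Zz/2)$ is concentrated in even degrees all odd squares vanish there — so your recorded values $Sq^1(c_1)=c_1^2$ and $Sq^1(c_2)=c_1c_2+c_3$ should be $Sq^2(c_1)=c_1^2$ and $Sq^2(c_2)=c_1c_2+c_3$, while $Sq^1(c_k)=0$; for the same reason your proposed consistency check via $Sq^1Sq^2=Sq^3$ is vacuous on this ring. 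With that indexing fixed, the computation goes through exactly as in the paper.
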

\begin{proof}
Recall  that $\pi^*(y_2)=c_1$, $\pi^*(y_8)=c_1c_3+c_2^2$ and $\pi^*(y_{12})=c_1^2c_4+c_3^2+c_1c_2c_3$. Then the calculation can be carried out by using the Cartan formula for Steenrod squares and the Wu formula for the mod $2$ Chern classes:
\[Sq^{2i}(c_k)=\sum_{j=0}^i\binom{k-j-1}{i-j}c_{k+i-j}c_{j}.\]
For $H^*(BU(4);\Zz/2)\cong\Zz/2[c_1,\dots,c_4]$, we have  $Sq^2(c_1)=c_1^2$, $Sq^2(c_2)=c_1c_2+c_3$, $Sq^2(c_3)=c_1c_3$, $Sq^2(c_4)=c_1c_4$, $Sq^4(c_2)=c_2^2$, $Sq^4(c_3)=c_1c_4+c_2c_3$, $Sq^4(c_4)=c_2c_4$, $Sq^8(c_4)=c_4^2$.
Thus,
\[Sq^2\pi^*(y_8)=Sq^2(c_1)c_3+c_1Sq^2(c_3)=c_1^2c_3+c_1^2c_3=0,\]
	\begin{align*}
	&Sq^2\pi^*(y_{12})=c_1^2Sq^2(c_4)+Sq^2(c_1)c_2c_3+c_1Sq^2(c_2)c_3+c_1c_2Sq^2(c_3)\\
&=c_1^3c_4+c_1^2c_2c_3+c_1(c_1c_2+c_3)c_3+c_1^2c_2c_3=\pi^*(y_2y_{12}),
\end{align*}
	\begin{align*}
	&Sq^4\pi^*(y_8)=Sq^2(c_1)Sq^2(c_3)+c_1Sq^4(c_3)+[Sq^2(c_2)]^2\\
	&=c_1^3c_3+c_1(c_1c_4+c_2c_3)+(c_1c_2+c_3)^2=\pi^*(y_2^2y_8+y_{12}),
\end{align*}
\begin{align*}
	&Sq^4\pi^*(y_{12})=Sq^4(c_1^2)c_4+c_1^2Sq^4(c_4)+[Sq^2(c_3)]^2+Sq^4(c_1\cdot c_2c_3)\\
	&=c_1^4c_4+c_1^2c_2c_4+c_1^2c_3^2+[Sq^2(c_1)Sq^2(c_2c_3)+c_1Sq^4(c_2c_3)]\\
	&=c_1^4c_4+c_1^2c_2c_4+c_1^2c_3^2+[c_1^2c_3^2+c_1(c_1^2c_2c_3+c_1c_3^2+c_1c_2c_4)]\\
	&=c_1^4c_4+c_1^2c_3^2+c_1^3c_2c_3=\pi^*(y_2^2y_{12}),
\end{align*}
\begin{align*}
&Sq^8\pi^*(y_{12})=c_1^2Sq^8(c_4)+Sq^4(c_1^2)Sq^4(c_4)+[Sq^4(c_3)]^2+Sq^8(c_1c_2\cdot c_3)\\
	&=c_1^2c_4^2+c_1^4c_2c_4+(c_1c_4+c_2c_3)^2+Sq^8(c_1c_2\cdot c_3)=\pi^*(y_8y_{12}).
\end{align*}
In the last equation, we use
	\begin{align*} 
		&Sq^8(c_1c_2\cdot c_3)=Sq^6(c_1c_2)Sq^2(c_3)+Sq^2(c_1c_2)Sq^6(c_3)+Sq^4(c_1c_2)Sq^4(c_3)\\
		&=c_1^3c_2^2c_3+c_1c_3^3+(c_1^3c_2+c_1^2c_3+c_1c_2^2)(c_1c_4+c_2c_3).
	\end{align*}
\end{proof}

Next we consider $Sq^i$ on $\im\bar\Delta^*$. The following proposition contains all we need.
\begin{prop}\label{prop:steenrod delta}
	$Sq^2\bar\Delta^*(y_8)=w_2'^2w_3'^2$, $Sq^2\bar\Delta^*(y_9)=w_2'w_3'^3$, %$Sq^2\bar\Delta^*(y_{12})=w_2'^4w_3'^2+w_2'w_3'^4$,
	 $Sq^4\bar\Delta^*(y_8)=w_3'^4+w_2'^3w_3'^2$,
	$Sq^4\bar\Delta^*(y_9)=w_2'^2w_3'^3$, $Sq^4\bar\Delta^*(y_{12})=w_2'^2w_3'^4$, %$Sq^8\bar\Delta^*(y_{12})=w_2'^7w_3'^2+w_2'w_3'^6$.	
\end{prop}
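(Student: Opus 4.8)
The plan is to compute each Steenrod square directly on the explicit images of the generators in $H^*(BSO(3);\Zz/2)=\Zz/2[w_2',w_3']$, using the formulas for $\bar\Delta^*(y_i)$ established in Proposition~\ref{prop:even generator}. Since $\bar\Delta^*$ is a ring homomorphism commuting with Steenrod operations, once we know $\bar\Delta^*(y_8)=w_2'^4+w_2'w_3'^2$, $\bar\Delta^*(y_9)=w_2'^3w_3'$, and $\bar\Delta^*(y_{12})=w_2'^3w_3'^2$, computing $Sq^i$ on these reduces to applying the Cartan formula together with the action of $Sq^i$ on $w_2'$ and $w_3'$ themselves.

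The first step is therefore to record the basic action on the generators of $H^*(BSO(3);\Zz/2)$. From Theorem~\ref{thm:SO3} we have $w_3'=Sq^1(w_2')$, and by the Wu formula $Sq^1(w_2')=w_3'$, $Sq^2(w_2')=w_2'^2$, $Sq^1(w_3')=0$ (since $Sq^1(w_3')=Sq^1Sq^1(w_2')=0$), $Sq^2(w_3')=w_2'w_3'$, and $Sq^3(w_3')=w_3'^2$; also $Sq^i$ vanishes on a class when $i$ exceeds its degree, and $Sq^{\deg}$ is squaring. These few rules, fed into the Cartan formula, determine everything. For instance, $Sq^2\bar\Delta^*(y_8)=Sq^2(w_2'^4+w_2'w_3'^2)$, where $Sq^2(w_2'^4)=(Sq^1 w_2'^2)^2$-type terms vanish appropriately and $Sq^2(w_2'w_3'^2)$ expands by Cartan into $Sq^2(w_2')w_3'^2+w_2'Sq^2(w_3'^2)$; carrying this out should yield $w_2'^2w_3'^2$. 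The remaining four identities are handled identically: expand the monomial in $w_2',w_3'$, apply Cartan, substitute the generator rules, and simplify.

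The main obstacle here is purely bookkeeping rather than conceptual: the Cartan expansions of degree-$4$ and degree-$8$ operations on products like $w_2'w_3'^2$ and $w_2'^3w_3'^2$ produce many terms, and one must track the binomial coefficients mod $2$ and the repeated use of $Sq^k$ on powers of $w_2'$ and $w_3'$ carefully. A useful internal consistency check is that each output must be a homogeneous polynomial of the correct degree in $\Zz/2[w_2',w_3']$ (e.g. $Sq^4\bar\Delta^*(y_8)$ must land in degree $12$), and that the total-square operator $Sq=\sum_i Sq^i$ is multiplicative, so one can alternatively compute $Sq(w_2')=w_2'+w_3'+w_2'^2$ and $Sq(w_3')=w_3'+w_2'w_3'+w_3'^2$ and read off the relevant graded pieces of the product, which often shortens the verification. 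Each of the five stated values then falls out by extracting the appropriate degree from these multiplicative expansions.
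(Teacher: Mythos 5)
Your proposal is correct and takes essentially the same route as the paper's proof: substitute the images $\bar\Delta^*(y_8)=w_2'^4+w_2'w_3'^2$, $\bar\Delta^*(y_9)=w_2'^3w_3'$, $\bar\Delta^*(y_{12})=w_2'^3w_3'^2$ from Proposition~\ref{prop:even generator} and evaluate the squares in $\Zz/2[w_2',w_3']$ via the Wu and Cartan formulas. Your total-square shortcut $Sq(w_2')=w_2'+w_3'+w_2'^2$, $Sq(w_3')=w_3'+w_2'w_3'+w_3'^2$ is merely a repackaging of the same Cartan-formula bookkeeping and does reproduce all five stated values.
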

\begin{proof}
	By Proposition \ref{prop:even generator}, $\bar\Delta^*(y_8)=w_2'^4+w_2'w_3'^2$, $\bar\Delta^*(y_9)=w_2'^3w_3'$ and $\bar\Delta^*(y_{12})=w_2'^3w_3'^2$.	Using the Wu formula and the Cartan formula, it is straightforward to verify the formulas in the proposition. $Sq^2\bar\Delta^*(y_8)=w_2'^2w_3'^2$ is obvious.
	\begin{align*}
		&Sq^2\bar\Delta^*(y_9)=Sq^2(w_2'\cdot w_2'^2)w_3'+w_2^3Sq^2(w_3')\\
		&=(w_2'^4+w_2'w_3'^2)w_3'+w_2'^4w_3'=w_2'w_3'^3,
	\end{align*}
\[
		%Sq^2\bar\Delta^*(y_{12})=Sq^2(w_2'\cdot w_2'^2)w_3'^2=(w_2'^4+w_2'w_3'^2)w_3'^2,\\
Sq^4\bar\Delta^*(y_8)=[Sq^1(w_2')]^4+w_2'[Sq^4(w_3'^2)]=w_3'^4+w_2'^3w_3'^2,
\]
		\begin{align*}
		&Sq^4\bar\Delta^*(y_9)=Sq^4(w_2'\cdot w_2'^2)w_3'+Sq^2(w_2'\cdot w_2'^2)Sq^2(w_3')+Sq^1(w_2'^3)w_3'^2\\
		&=(w_2'^5+w_2'^2w_3'^2)w_3'+(w_2'^4+w_2'w_3'^2)w_2'w_3'+w_2'^2w_3'^3=w_2'^2w_3'^3,
	\end{align*}
		\begin{align*}
		&Sq^4\bar\Delta^*(y_{12})=Sq^4(w_2'\cdot w_2'^2)w_3'^2+w_2'^3Sq^4(w_3'^2)\\
		&=(w_2'^5+w_2'^2w_3'^2)w_3'^2+w_2'^5w_3'^2=w_2'^2w_3'^4.
	\end{align*}
%\begin{align*}
%		&Sq^8\bar\Delta^*(y_{12})=Sq^4(w_2'\cdot w_2'^2)Sq^4(w_3'^2)+Sq^2(w_2'\cdot w_2'^2)Sq^6(w_3'^2)\\
%		&=(w_2'^5+w_2'^2w_3'^2)w_2'^2w_3'^2+(w_2'^4+w_2'w_3'^2)w_3'^4=w_2'^7w_3'^2+w_2'w_3'^6.
%	\end{align*}
\end{proof}

For $Sq^i$ on $\im\phi^*$, we only need a partial result.
\begin{prop}\label{prop:steenrod phi}
$Sq^2\phi^*(y_{12})=w_2^4w_3^2+w_2w_3^4+w_3^2w_4^2$, and \[Sq^8\phi^*(y_{12})\equiv\phi^*(y_8)\phi^*(y_{12})+w_3^4w_4^2\mod (w_2).\]
\end{prop}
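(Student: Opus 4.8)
The plan is to verify both formulas in Proposition~\ref{prop:steenrod phi} by direct computation using the explicit formula $\phi^*(y_{12})=\bar w_6^2+\bar w_5^2w_2$ from Proposition~\ref{prop:image in BSO(6)}, together with the Wu formula and the Cartan formula, exactly as was done for $\bar\Delta^*$ in Proposition~\ref{prop:steenrod delta}. Since $\phi^*$ is a ring homomorphism commuting with Steenrod squares, it suffices to compute $Sq^2$ and $Sq^8$ of the polynomial $\bar w_6^2+\bar w_5^2w_2\in H^*(BSO(6);\Zz/2)$, where $\bar w_5=w_5+w_2w_3$ and $\bar w_6=w_6+w_2w_4$.

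First I would record the needed values of $Sq^i$ on the individual classes $w_2,w_3,w_4,w_5,w_6$ via the Wu formula $Sq^i(w_k)=\sum_{j=0}^i\binom{k-j-1}{i-j}w_{k+i-j}w_j$, keeping in mind that $w_j=0$ for $j>6$ in $H^*(BSO(6);\Zz/2)$ (so higher classes are truncated). For the $Sq^2$ computation, I would expand $Sq^2(\bar w_6^2)=(Sq^1\bar w_6)^2$ and $Sq^2(\bar w_5^2 w_2)$ using the Cartan formula; the squared terms simplify because $Sq^2$ of a square is the square of $Sq^1$, which cuts the work roughly in half. Collecting terms should yield $w_2^4w_3^2+w_2w_3^4+w_3^2w_4^2$. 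For $Sq^8$, the degree $24$ computation is heavier: I would apply the Cartan formula to $Sq^8(\bar w_6^2)=\sum_{a+b=8}Sq^a(\bar w_6)Sq^b(\bar w_6)$, which reduces to $\sum_{a}Sq^{a}(\bar w_6)Sq^{8-a}(\bar w_6)$ with cross terms appearing twice and hence vanishing mod $2$ except where $a=4$, giving the clean contribution $(Sq^4\bar w_6)^2$ plus $(Sq^8\text{-type})$ boundary terms; similarly for $Sq^8(\bar w_5^2 w_2)$.

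Since the target formula for $Sq^8\phi^*(y_{12})$ is only asserted modulo $(w_2)$, I would streamline the second computation by setting $w_2=0$ throughout from the outset, so that $\bar w_5\equiv w_5$ and $\bar w_6\equiv w_6$ and the whole $\bar w_5^2w_2$ summand drops out. Then I only need $Sq^8(w_6^2)=(Sq^4 w_6)^2$ modulo $(w_2)$, which reduces the problem to computing $Sq^4(w_6)$ mod $(w_2)$ by Wu and squaring it. Separately I would compute $\phi^*(y_8)\phi^*(y_{12})$ mod $(w_2)$ using $\phi^*(y_8)=\bar w_4^2+w_3^2w_2\equiv w_4^2$ and $\phi^*(y_{12})\equiv w_6^2$, giving $w_4^2w_6^2$, and then check that $(Sq^4w_6)^2\equiv w_4^2w_6^2+w_3^4w_4^2\pmod{(w_2)}$ to confirm the stated answer $\phi^*(y_8)\phi^*(y_{12})+w_3^4w_4^2$.

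The main obstacle will be bookkeeping in the full $Sq^2$ calculation, where no simplifying reduction mod $(w_2)$ is available: I must carry all of $\bar w_5,\bar w_6$ and their $Sq^1$-images honestly, being careful with the Wu-formula binomial coefficients modulo $2$ and with the truncation $w_j=0$ for $j>6$. The Cartan-formula expansions of $Sq^2(w_2w_3)$, $Sq^2(w_2w_4)$ and the like produce many terms that must cancel in pairs, and a single sign- or index-error would corrupt the final polynomial; I would mitigate this by computing $Sq^1$ and $Sq^2$ of $\bar w_5$ and $\bar w_6$ as named intermediate quantities first, then assembling the squares, rather than expanding everything at once.
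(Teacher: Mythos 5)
Your $Sq^2$ computation is fine and is essentially the paper's own argument (the paper works with the expanded form $\phi^*(y_{12})=w_6^2+w_2^2w_4^2+w_2w_5^2+w_2^3w_3^2$ rather than with $\bar w_5,\bar w_6$, but the bookkeeping is the same and your use of $Sq^2(u^2)=(Sq^1u)^2$ is a mild simplification). The $Sq^8$ half, however, contains a genuine error: you cannot ``set $w_2=0$ throughout from the outset.'' The ideal $(w_2)\subset H^*(BSO(6);\Zz/2)$ is \emph{not} closed under Steenrod squares --- already $Sq^1(w_2)=w_3\notin(w_2)$ --- so the operations do not descend to the quotient by $(w_2)$, and a summand of $\phi^*(y_{12})$ lying in $(w_2)$ may contribute nontrivially to $Sq^8\phi^*(y_{12})$ modulo $(w_2)$. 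That is exactly what happens here: while $Sq^8(w_2w_5^2)$ and $Sq^8(w_2^3w_3^2)$ do happen to lie in $(w_2)$, the term $w_2^2w_4^2$ (which comes from $\bar w_6^2$ and which your reduction discards) satisfies
\[
Sq^8(w_2^2w_4^2)=\bigl[Sq^4(w_2w_4)\bigr]^2=\bigl[w_2w_4^2+w_3\,Sq^3(w_4)+w_2^2\,Sq^2(w_4)\bigr]^2\equiv w_3^2(w_3w_4+w_2w_5)^2\equiv w_3^4w_4^2 \mod (w_2),
\]
which is precisely the term $w_3^4w_4^2$ in the proposition.

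Your plan would in fact detect its own failure at the last step: by the Wu formula in $H^*(BSO(6);\Zz/2)$ one gets $Sq^4(w_6)=w_4w_6$ exactly (the other Wu terms $w_{10}$, $w_2w_8$, $w_3w_7$ all vanish by the truncation $w_j=0$ for $j>6$ and $w_1=0$), so $(Sq^4w_6)^2=w_4^2w_6^2$ and the asserted check $(Sq^4w_6)^2\equiv w_4^2w_6^2+w_3^4w_4^2 \bmod (w_2)$ fails; following the streamlined route you would conclude $Sq^8\phi^*(y_{12})\equiv\phi^*(y_8)\phi^*(y_{12})$ and miss the second term. The repair is the paper's route: expand $\phi^*(y_{12})$ into its four monomials, apply $Sq^8$ to each \emph{before} reducing modulo $(w_2)$, and only then reduce --- checking term by term that $Sq^8(w_6^2)=(Sq^4w_6)^2=w_4^2w_6^2\equiv\phi^*(y_8)\phi^*(y_{12})$, that $Sq^8(w_2^2w_4^2)\equiv w_3^4w_4^2$ as above, and that the remaining two terms contribute $0$ modulo $(w_2)$ (for $w_2^3w_3^2$ this uses that $Sq^{\mathrm{even}}(w_2^3)\in(w_2)$ and $Sq^{\mathrm{odd}}$ of a square vanishes).
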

\begin{proof}
Recall from Proposition \ref{prop:image in BSO(6)} that $\phi^*(y_{12})=w_6^2+w_2^2w_4^2+w_2w_5^2+w_2^3w_3^2$. One easily computes that $Sq^2(w_6^2)=0$, $Sq^2(w_2^2w_4^2)=w_2^2w_5^2+w_3^2w_4^2$, $Sq^2(w_2w_5^2)=w_2^2w_5^2$, $Sq^2(w_2^3w_3^2)=w_2^4w_3^2+w_2w_3^4$, hence the formula for $Sq^2\phi^*(y_{12})$ holds.

Now we compute the action of $Sq^8$ on each terms in the expression of $\phi^*(y_{12})$, modulo the ideal $(w_2)$. $Sq^8(w_2w_5^2)\equiv 0$ mod $(w_2)$ is obvious. 
\[Sq^8(w_6^2)=[Sq^4(w_6)]^2=w_4^2w_6^2\equiv \phi^*(y_8)\phi^*(y_{12})\mod (w_2).\]
\[\begin{split}
	Sq^8(w_2^3w_3^2)&=Sq^2(w_2^3)Sq^6(w_3^2)+Sq^4(w_2^3)Sq^4(w_3^2)+Sq^6(w_2^3)Sq^2(w_3^2)\\
	&\equiv 0\mod (w_2),
\end{split}\]
since $Sq^2(w_2^3)=w_2^4+w_2w_3^2$, $Sq^4(w_2^3)=w_2^5+w_2^2w_3^2$ and $Sq^6(w_2^3)=w_2^6$.
\[\begin{split}
Sq^8(w_2^2w_4^2)&=[Sq^4(w_2w_4)]^2=[w_2w_4^2+w_3Sq^3(w_4)+w_2^2Sq^2(w_4)]^2\\
&\equiv [w_3Sq^3(w_4)]^2\equiv w_3^4w_4^2\mod (w_2).
\end{split}\]
Then the formula for $Sq^8\phi^*(y_{12})$ follows.
\end{proof}
Now we are ready to prove Theorem \ref{thm:steenrod}. The action of $Sq^1$ on all generators $y_i$, and $Sq^2(y_3)$, $Sq^4(y_5)$ were given in Section \ref{sec:on the mod 2 cohomology}. $Sq^2(y_5)=0$ since $H^7(BPU(4);\Zz/2)=0$.

Since $H^{10}(BPU(4);\Zz/2)\cong\Zz/2\{y_2^5,y_2y_8,y_5^2\}$, Proposition \ref{prop:steenrod pi}, \ref{prop:steenrod delta} and \ref{prop:even generator}  give $Sq^2(y_8)=y_5^2$. Similarly, we have $Sq^4(y_8)=y_2^2y_8+y_{12}+y_3^4$, using the fact that  $H^{12}(BPU(4);\Zz/2)\cong\Zz/2\{y_2^6,y_2^2y_8,y_{12},y_3y_9,y_3^4\}$.  

Since $H^{11}(BPU(4);\Zz/2)\cong\Zz/2\{y_3^2y_5,y_3y_8\}$ and $\bar\Delta^*(y_3^2y_5)=w_2'w_3'^3$, $\bar\Delta^*(y_3y_8)=w_3'(w_2'^4+w_2'w_3'^2)$ by Proposition \ref{prop:even generator}, it follows from  Proposition \ref{prop:steenrod delta} that $Sq^2(y_9)=y_3^2y_5$. Similarly, the formula $Sq^4(y_9)=y_3y_5^2$ follows from Proposition \ref{prop:steenrod delta}, and the fact that  $H^{13}(BPU(4);\Zz/2)\cong\Zz/2\{y_3y_5^2,y_5y_8\}$, using $\bar\Delta^*(y_3y_5^2)=w_2'^2w_3'^3$ and $\bar\Delta^*(y_5y_8)=w_2'w_3'(w_2'^4+w_2'w_3'^2)$. 

Since $H^{17}(BPU(4);\Zz/2)\cong\Zz/2\{y_3^4y_5, y_3^3y_8, y_3y_5y_9, y_5y_{12}, y_8y_9\}$, we may assume that $Sq^8(y_9)=ay_3^4y_5+by_3^3y_8+cy_3y_5y_9+dy_5y_{12}+ey_8y_9$. Then \[Sq^1Sq^8(y_9)=ay_3^6+by_3^6+c(y_3^3y_9+y_3y_5^3)+d(y_3^2y_{12}+y_3y_5^3)+e(y_3^3y_9+y_5^2y_8).\]
Hence, the relation $Sq^1Sq^8(y_9)=Sq^9(y_9)=y_9^2=y_3^2y_{12}+y_5^2y_8$ gives $a=b$ and $c=d=e=1$. On the other hand, if $a=b=1$, then $w_2'^4w_3'^3$ would not appear in the expression of $Sq^8\bar\Delta^*(y_9)=Sq^8(w_2'^3w_3')$, but this is not true by an easy computation, so $Sq^8(y_9)=y_3y_5y_9+y_5y_{12}+y_8y_9$.

Notice that $H^{14}(BPU(4);\Zz/2)$ is spanned by $y_2^7$, $y_2^3y_8$, $y_2y_{12}$, $y_3^3y_5$, $y_3^2y_8$, $y_5y_9$. Then in the expression $Sq^2(y_{12})=ay_2^7+by_2^3y_8+cy_2y_{12}+dy_3^3y_5+ey_3^2y_8+fy_5y_9$, $a=b=0$, $c=1$ by Proposition \ref{prop:steenrod pi}, and $d=f=0$, $e=1$ by Proposition \ref{prop:steenrod phi} and the fact that $\phi^*(y_3^3y_5)\equiv w_3^3w_5$, $\phi^*(y_3^2y_8)\equiv w_3^2w_4^2$, $\phi^*(y_5y_9)\equiv w_5(w_3w_6+w_4w_5)$ mod $(w_2)$ from Proposition \ref{prop:image in BSO(6)}, which proves $Sq^2(y_{12})=y_2y_{12}+y_3^2y_8$.

 Since $H^{16}(BPU(4);\Zz/2)\cong\Zz/2\{y_2^8,y_2^4y_8,y_2^2y_{12},y_8^2,y_3^2y_5^2,y_3y_5y_8\}$, Proposition \ref{prop:steenrod pi}, \ref{prop:steenrod delta} and \ref{prop:even generator} give $Sq^4(y_{12})=y_2^2y_{12}+y_3^2y_5^2$. 
 
 An easy calculation shows that $H^{20}(BPU(4);\Zz/2)$ is spanned by $y_3^5y_5$, $y_3^4y_8$, $y_3^2y_5y_9$, $y_3y_5y_{12}$, $y_3y_8y_9$, $y_5^4$ and  monomials in $y_2$, $y_8$, $y_{12}$ of degree $20$. Proposition \ref{prop:steenrod pi} says that among monomials in $y_2$, $y_8$, $y_{12}$ of degree $20$, only $y_8y_{12}$ appears in the expression of $Sq^8(y_{12})$. Furthermore, since $\phi^*(y_3^5y_5)\equiv w_3^5w_5$, $\phi^*(y_3^4y_8)\equiv w_3^4w_4^2$, $\phi^*(y_3^2y_5y_9)\equiv w_3^2w_5(w_3w_6+w_4w_5)$, $\phi^*(y_3y_5y_{12})\equiv w_3w_5w_6^2$, $\phi^*(y_3y_8y_9)\equiv w_3w_4^2(w_3w_6+w_4w_5)$, $\phi^*(y_5^4)\equiv w_5^4$ mod $(w_2)$ by Proposition \ref{prop:image in BSO(6)}, we obtain from Proposition \ref{prop:steenrod phi} that $Sq^8(y_{12})=y_3^4y_8+y_8y_{12}$. 

The proof of Theorem \ref{thm:steenrod} is completed.
\section{Proof of Theorem \ref{thm:BPU4}}
Let $\bar\alpha_i\in H^{2i}(BPU(4);\Zz)$, $i=2,3,4,6$, be the elements constructed in Section \ref{sec:^UE}, and for simplicity let $x_1$, $y_{2,I}$ denote $\chi^*(x_1)$, $\chi^*(y_{2,I})\in H^*(BPU(4);\Zz)$ for $\chi:BPU(4)\to K(\Zz,3)$.

\begin{prop}\label{prop:generated}
	The cohomology ring $H^*(BPU(4);\Zz)$ is generated by $\bar\alpha_i$, $i=2,3,4,6$, and $x_1$, $y_{2,1}$, $y_{2,(1,0)}$.
\end{prop}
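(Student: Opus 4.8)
The plan is to prove that the listed classes generate $H^*(BPU(4);\Zz)$ by analyzing the Serre spectral sequence $^UE$ of the fibration $BU(4)\xrightarrow{\pi}BPU(4)\xrightarrow{\chi}K(\Zz,3)$, which converges to $H^*(BPU(4);\Zz)$. The associated graded of $H^*(BPU(4);\Zz)$ is $\bigoplus_s {^U}E^{s,*}_\infty$, so it suffices to show that every class on the $E_\infty$ page lies in the subalgebra generated by the images of the proposed generators. The column $s=0$ is already understood: by Proposition \ref{prop:K_4}, ${^U}E^{0,*}_\infty\cong K_4$, which is generated by $\alpha_2,\alpha_3,\alpha_4,\alpha_6$, and these are precisely $\pi^*(\bar\alpha_i)$, so this column is accounted for by $\bar\alpha_2,\bar\alpha_3,\bar\alpha_4,\bar\alpha_6$. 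The remaining columns $s>0$ come from the torsion part of $H^*(K(\Zz,3);\Zz)$ described in Theorem \ref{thm:cohomology of k(3)}.

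The strategy for the positive columns is to identify ${^U}E^{s,*}_\infty$ for $s>0$ as a module over ${^U}E^{0,*}_\infty\cong K_4$ and show it is generated multiplicatively by the surviving images of $x_1$ and the $y_{2,I}$. First I would use the fact that all relevant elements of $H^*(K(\Zz,3);\Zz)$ for the prime $2$ are built from $x_1$ and the classes $y_{2,k}$ (equivalently the $y_{2,I}$ for sequences $I$), since by Theorem \ref{thm:cohomology of k(3)} the $2$-primary part $R_2$ is generated by such $y_{2,I}$, together with $x_1$. The odd-primary contributions must be handled separately: at the prime $3$ one has classes like $x_{3,0}$ and $y_{3,0}$, and I would argue, as in the treatment of $\alpha_6$ in Proposition \ref{prop:K_4}, that the relevant $3$-torsion and $p$-torsion ($p\geq5$) classes either do not survive or are killed/hit by differentials governed by \eqref{eq:nabla} and Corollary \ref{cor:U^d_3}, leaving only $2$-primary and $4$-torsion classes on $E_\infty$. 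The $4$-torsion is exactly the module $\Zz/4[\alpha_4,\alpha_6]$ appearing in ${^U}E^{3,*}_\infty$ from Proposition \ref{prop:K_4}, generated over $K_4$ by $x_1$.

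Concretely, for each fixed $s>0$ I would examine which basis elements $x_1^i y_{2,I}$ of $H^s(K(\Zz,3);\Zz)$ persist to $E_\infty$ when multiplied by elements of $K_4$, using the differential formula $d_r(f\xi)$ deduced from Corollary \ref{cor:U^d_3} and the Leibniz rule, together with the explicit differentials of $^KE$ in Proposition \ref{prop:differentials K} transported via $\Psi^*$. Proposition \ref{prop:nonzero} guarantees that the monomials $x_1^i y_{2,k}^j$ are permanent cycles, so these (and products $x_1^i y_{2,1}^j y_{2,(1,0)}^k$) do survive; the content is to verify that nothing outside the subalgebra generated by $x_1,y_{2,1},y_{2,(1,0)}$ over $K_4$ survives, i.e. that the higher $y_{2,I}$ for longer sequences $I$, and all odd-primary generators, are eliminated in the range that matters. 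I expect the main obstacle to be precisely this elimination step: controlling, uniformly in total degree, which products $\alpha\cdot x_1^i y_{2,I}$ survive all higher differentials and confirming that the surviving ones are all decomposable in terms of the three chosen transgression classes $x_1$, $y_{2,1}$, $y_{2,(1,0)}$. This requires carefully matching the multiplicative structure of $R_2$ against the $K_4$-module structure of the $E_\infty$ page, and ruling out indecomposable survivors in higher columns by degree and differential arguments analogous to the $d_9,d_{11},d_{13}$ analysis carried out for $\alpha_6$.
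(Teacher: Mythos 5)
Your plan reduces the proposition to a complete determination of the positive-filtration columns of $^U\!E_\infty$, and that reduction is where the gap lies: the elimination step you yourself flag (``ruling out indecomposable survivors in higher columns'') is the entire content of the statement, and the tools you cite do not deliver it. Corollary \ref{cor:U^d_3} and Proposition \ref{prop:differentials K} give $d_3$ and certain $d_{2p^{k+1}-1}$'s on specific classes; the $d_9,d_{11},d_{13}$ analysis in Proposition \ref{prop:K_4} works only because it concerns one low bidegree where the relevant groups are tiny. To run your argument you would have to control, uniformly in all bidegrees, every higher differential into and out of the $K_4$-multiples of every $y_{2,I}$, and moreover decide \emph{decomposability}: a class like $\chi^*(y_{2,2})$ (degree $18$, detected in $E^{18,0}_\infty$ if nonzero) must be shown to agree in the associated graded with a polynomial in $x_1$, $y_{2,1}$, $y_{2,(1,0)}$ and the $\bar\alpha_i$, which requires knowing the precise relations in the quotient of $H^{18}(K(\Zz,3);\Zz)$ by images of differentials, not merely which classes survive. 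Nothing in your sketch supplies a mechanism for this, and the paper never performs such a computation. (One side remark: your odd-primary step can be repaired cheaply without differentials, since all torsion in $H^*(BPU(4);\Zz)$ is killed by a power of $4$, so odd-primary classes cannot survive to $E_\infty$; but this does not help with the $2$-primary columns.)

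The paper circumvents the spectral sequence entirely at this point. It first computes the image of the mod $2$ reduction $\rho$ inside Toda's ring $H^*(BPU(4);\Zz/2)$ (Lemma \ref{lem:image rho}), using $\im\rho=\ker\delta\subset\ker Sq^1$, an explicit computation of $\ker Sq^1$ (Proposition \ref{prop:bockstein}), and a $\Zz/4$-Bockstein argument to exclude $y_2$, $y_2y_8$, $y_2y_{12}$; the known reductions $\rho(\bar\alpha_i)$, $\rho(x_1)=y_3$, $\rho(y_{2,1})=y_5^2$, $\rho(y_{2,(1,0)})=y_3^5+y_3^2y_9+y_5^3$ then show the subring $R$ generated by the proposed classes satisfies $\rho(R)=\im\rho$. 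Since $\pi^*(R)\cong K_4$ coincides with the torsion-free quotient $F$ (this is the only input from Proposition \ref{prop:K_4}), the quotient $N=H^*(BPU(4);\Zz)/R$ is isomorphic to $\mathbf t/(\mathbf t\cap R)$, a $2$-primary torsion module with $N\otimes\Zz/2\cong\rho(N)=0$, forcing $N=0$. If you want to salvage your approach, you should replace the open-ended differential bookkeeping with this kind of global argument; as written, the proposal does not constitute a proof.
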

Before proving Proposition \ref{prop:generated}, we need a result on the image of $H^*(BPU(4);\Zz)$ under the mod $2$ reduction.
\begin{lem}\label{lem:image rho}
The image of $\rho:H^*(BPU(4);\Zz)\to H^*(BPU(4);\Zz/2)$ is the subalgebra generated by $y_2^2$, $y_2^3$, $y_3$, $y_5^2$, $y_8+y_3y_5$, $y_{12}+y_3y_9$, $y_3^2y_9+y_5^3$.
\end{lem}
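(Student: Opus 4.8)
Throughout write $A$ for the subalgebra generated by the seven listed elements, and set $Y_8=y_8+y_3y_5$, $Y_{12}=y_{12}+y_3y_9$, $P=y_3^2y_9+y_5^3$; recall that in this section $x_1,y_{2,1},y_{2,(1,0)}$ already denote their images in $H^*(BPU(4);\Zz)$. The plan is to prove $A\subseteq\im\rho$ and $\im\rho\subseteq A$ separately. For the first inclusion I exhibit an integral preimage of each generator. Proposition~\ref{prop:Z_2 alpha_i} gives $\rho(\bar\alpha_2)=y_2^2$, $\rho(\bar\alpha_3)=y_2^3$, $\rho(\bar\alpha_4)=Y_8$, $\rho(\bar\alpha_6)=Y_{12}$. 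Since $\chi^*(x_1)=y_3\bmod 2$ (Proposition~\ref{prop:odd generator}) we have $\rho(x_1)=y_3$, and $\rho(y_{2,1})=y_5^2$ follows from $\rho(y_{2,1})=x_{2,0}^2$ in $H^*(K(\Zz,3))$ together with $\chi^*(x_{2,0})=y_5$. Finally, the formula $y_{2,(1,0)}=y_{2,1}x_{2,0}-x_{2,1}y_{2,0}$ of Theorem~\ref{thm:cohomology of k(3)}, with $\rho(y_{2,1})=x_{2,0}^2$, $\rho(y_{2,0})=x_1^2$, $\chi^*(x_{2,1})=y_3^3+y_9$ (Proposition~\ref{prop:w_i}) and $\chi^*(x_1)=y_3$, yields
\[
\rho(y_{2,(1,0)})=y_5^3+(y_3^3+y_9)y_3^2=y_3^2y_9+y_5^3+y_3^5 .
\]
As $y_3^5=\rho(x_1^5)\in\im\rho$, we get $y_3^2y_9+y_5^3\in\im\rho$, hence $A\subseteq\im\rho$.

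For the reverse inclusion I use the mod~$2$ Bockstein spectral sequence $(B_r,d_r)$ with $B_1=H^*(BPU(4);\Zz/2)$, $d_1=Sq^1$. The identity $\im\rho=\ker\tilde\beta$ ($\tilde\beta$ the integral Bockstein) gives both $\im\rho\subseteq\ker Sq^1$ and the vanishing of all higher Bocksteins on classes of $\im\rho$. The first ingredient is that $\im Sq^1\subseteq A$. Since $Y_8,Y_{12},P\in A$ we have $y_8\in A+Ay_5$ and $y_{12}\in A+Ay_9$, and since $y_2^2,y_2^3,y_5^2\in A$ and $y_9^2=y_3^2Y_{12}+y_5^2Y_8+y_3P\in A$, the ring $B_1$ is generated as an $A$-module by $1,y_2,y_5,y_9,y_5y_9$ (note $y_2y_5=y_2y_9=0$). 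As $Sq^1$ annihilates $A$ and carries each module generator into $A$ — $Sq^1(y_5)=y_3^2$, $Sq^1(y_9)=y_5^2$, $Sq^1(y_5y_9)=P$ — the Leibniz rule forces $\im Sq^1\subseteq A$.

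The second ingredient is the computation of $B_2=H(B_1,Sq^1)$ and of $d_2$. In the variables $Y_8,Y_{12}$ the operator $Sq^1$ reduces to $y_5\mapsto y_3^2$, $y_9\mapsto y_5^2$. Splitting off the ideal $(y_2)=y_2\,\Zz/2[y_2,Y_8,Y_{12}]$, on which $Sq^1$ vanishes, and resolving the single relation $y_9^2=y_3^2y_{12}+y_5^2y_8$ through the exact sequence $0\to P_0\xrightarrow{\,\cdot r\,}P_0\to P_0/(r)\to 0$ with $P_0=\Zz/2[y_3,y_5,Y_8,y_9,Y_{12}]$ — whose relation class is $[y_9^2]$, not a zero divisor in $H(P_0,Sq^1)=\Zz/2[y_3]/(y_3^2)\otimes\Zz/2[Y_8,Y_{12},y_9^2]$ — I obtain
\[
B_2\cong \Zz/2[y_2,Y_8,Y_{12}]\oplus y_3\,\Zz/2[Y_8,Y_{12}].
\]
Here $d_2(y_2)=y_3$: because $H^2(BPU(4);\Zz)=0$ (from $^UE$, since $d_3(c_1)=4x_1$ by \eqref{eq:nabla}) the class $y_2$ does not lift and must support a differential, whose only possible target in $B_2$ in degree $3$ is $y_3=\rho(x_1)$, with $x_1$ generating $H^3(BPU(4);\Zz)=\Zz/4$; equivalently $\tilde\beta(y_2)=2x_1$. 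Since $y_3,Y_8,Y_{12}$ lift integrally they are permanent cycles, so $d_2$ is determined and a direct computation gives $\ker d_2=\Zz/2[y_2^2,y_2^3,Y_8,Y_{12}]\oplus y_3\Zz/2[Y_8,Y_{12}]$, which is exactly the image of $A$ under $\ker Sq^1\twoheadrightarrow B_2$.

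These two ingredients finish the argument: for $z\in\im\rho$ we have $z\in\ker Sq^1$, and since $z$ admits an integral lift, $\tilde\beta z=0$, so its secondary Bockstein $d_2(\bar z)=0$; thus $\bar z\in\ker d_2=\im(A\to B_2)$, whence $z\in A+\im Sq^1=A$. The main obstacle is the third paragraph: identifying $B_2$ correctly through the relation $y_9^2=y_3^2y_{12}+y_5^2y_8$ and establishing $d_2(y_2)=y_3$ rigorously rather than merely asserting it. By contrast the $A$-module argument for $\im Sq^1\subseteq A$ is clean, precisely because $P$, $Y_8$, $Y_{12}$ are the combinations tailored to absorb the images of $Sq^1$.
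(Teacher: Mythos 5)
Your proof is correct, and while its first half coincides with the paper's (the same integral preimages $\bar\alpha_2,\bar\alpha_3,\bar\alpha_4,\bar\alpha_6$, $x_1$, $y_{2,1}$, $y_{2,(1,0)}$, with the same $y_3^5$ correction for $\rho(y_{2,(1,0)})$), the second half takes a genuinely different route. Writing $Y_8=y_8+y_3y_5$, $Y_{12}=y_{12}+y_3y_9$ as you do, the paper proves $\im\rho\subseteq A$ by computing the \emph{full} kernel of $Sq^1$ (deferred to its last section via the DGA $(W,\DD)$ and an explicit chain homotopy), observing that $\ker Sq^1$ exceeds $A$ exactly by the classes $y_2Y_8^aY_{12}^b$, and then killing these with the integral Bockstein: $\delta(y_2)=2x_1$ is extracted from a mod $4$ Bockstein diagram using $H^2(BPU(4);\Zz/4)\cong\Zz/4$, and the nonvanishing of $\delta(y_2y_8)=2x_1\bar\alpha_4$ and $\delta(y_2y_{12})=2x_1\bar\alpha_6$ requires the order-$4$ input of Proposition \ref{prop:order} (through Proposition \ref{prop:K_4}). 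You instead run the mod $2$ Bockstein spectral sequence through $B_2$: your $A$-module argument with generators $1,y_2,y_5,y_9,y_5y_9$ gives $\im Sq^1\subseteq A$ without ever computing $\ker Sq^1$; your computation of $B_2$ by splitting off the ideal $(y_2)$ and using the regular-element exact sequence is sound (the identification $[r]=[y_9^2]$ is correct, since $y_3^2Y_{12}=Sq^1(y_5Y_{12})$, $y_5^2Y_8=Sq^1(y_9Y_8)$ and $y_3^3y_9+y_3y_5^3=Sq^1(y_3y_5y_9)$ are boundaries in $P_0$) and reproduces the paper's Proposition \ref{prop:bockstein} by a different method; and your $d_2(y_2)=y_3$, forced by $H^2(BPU(4);\Zz)=0$, $H^3(BPU(4);\Zz)=\Zz/4$ and BSS structure theory (equivalent to the paper's $\delta(y_2)=2x_1$), combines with the derivation property of $d_2$ to give $d_2(y_2Y_8^aY_{12}^b)=y_3Y_8^aY_{12}^b\neq 0$ for all $a,b$ at once, so Proposition \ref{prop:order} is never invoked in your argument. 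What each buys: the paper's route is more elementary (only the mod $2$ and mod $4$ Bockstein long exact sequences, no spectral-sequence multiplicativity) and its explicit description of $\ker Sq^1$ is precisely what it needs for its module decomposition; your route trades that explicit kernel computation for the multiplicative Bockstein spectral sequence, treats the entire complement of $A$ uniformly via Leibniz, and removes the dependence on the torsion-order proposition from this lemma. For a final write-up you should still supply the unproved identity $H(P_0,Sq^1)\cong\Zz/2[y_3]/(y_3^2)\otimes\Zz/2[Y_8,Y_{12},y_9^2]$ (a routine filtration or chain-homotopy computation parallel to the paper's) and the one-line check that $Sq^1$ annihilates all seven generators of $A$.
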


\begin{proof}
	Let $S\subset H^*(BPU(4);\Zz/2) $ be the subring generated by the elements in the Lemma. First we prove that $S\subset\im\rho$. By Proposition \ref{prop:Z_2 alpha_i},  $\rho(\bar\alpha_2)=y_2^2$, $\rho(\bar\alpha_3)=y_2^3$, $\rho(\bar\alpha_4)=y_8+y_3y_5$, $\rho(\bar\alpha_6)=y_{12}+y_3y_9$. Recall from Section \ref{sec:K3} that in $H^*(K(\Zz,3);\Zz/2)$ we have $y_{2,0}=x_1^2$, $y_{2,1}=x_{2,0}^2$ and $y_{2,(1,0)}=y_{2,0}x_{2,1}+x_{2,0}y_{2,1}=x_1^2x_{2,1}+x_{2,0}^3$. Hence from Proposition \ref{prop:odd generator} and the second part of Proposition \ref{prop:w_i} we have $\rho(x_1)=y_3$, $\rho(y_{2,1})=y_5^2$, $\rho(y_{2,(1,0)})=y_3^5+y_3^2y_9+y_5^3$. It follows immediately that $S\subset\im\rho$.
	
	For the reverse inclusion $\im\rho\subset S$, we use the fact that $\im\rho=\ker\beta\subset \ker Sq^1$, where $\beta$ is the Bockstein homomorphism in the long exact sequence 
	\[\cdots\to H^i(-;\Zz)\xrightarrow{\cdot 2} H^i(-;\Zz)\xrightarrow{\rho}H^i(-;\Zz/2)\xrightarrow{\beta}H^{i+1}(-;\Zz)\to\cdots\]
	induce by the sequence $0\to\Zz\xrightarrow{\cdot2}\Zz\to\Zz/2\to 0$, noting that $Sq^1=\rho\beta$. We claim that the kernel of $Sq^1$ on $H^*(BPU(4);\Zz/2)$ is the subalgebra generated by $y_2$, $y_3$, $y_5^2$, $y_8+y_3y_5$, $y_{12}+y_3y_9$, $y_3^2y_9+y_5^3$, whose proof is defered to the last Section for reader's convenience. Then from the ring structure of $H^*(BPU(4);\Zz/2)$ it is easy to see that there is a $\Zz/2$-module isomorphism 
	\[\ker Sq^1\cong S\oplus y_2\cdot\Zz/2[y_8,y_{12}].\]
	Furthermore, we know that $H^2(BPU(n);\Zz)=0$ and $H^3(BPU(n);\Zz)=\Zz/n\{x_1\}$ for any $n$. Hence the universal coefficient theorem shows that $H^2(BPU(4);\Zz/4)\cong\Zz/4$.
The sequence $0\to\Zz\xrightarrow{\cdot4}\Zz\to\Zz/4\to 0$ also induces a long exact sequence of cohomology, and there is a commutative diagram
	\[\xymatrix{\ar[r]&H^i(-;\Zz)\ar[r]^{\cdot4}\ar^{\cdot2}[d]&H^i(-;\Zz)\ar[r]^{\rho'}\ar@{=}[d]&H^i(-;\Zz/4)\ar[r]^{\beta'}\ar[d]^{\theta=\mathrm{mod}\ 2}&H^{i+1}(-;\Zz)\ar[r]\ar^{\cdot2}[d]&\\
	\ar[r]&H^i(-;\Zz)\ar[r]^{\cdot2}&H^i(-;\Zz)\ar[r]^{\rho}&H^i(-;\Zz/2)\ar[r]^{\beta}&H^{i+1}(-;\Zz)\ar[r]&}
	\]
	 Suppose that $y_2'$ generates $H^2(BPU(4);\Zz/4)$. Then $\beta'(y_2')=x_1$, and $\theta(y_2')=y_2$.
	Hence, the above diagram shows that $\beta(y_2)=2x_1$, and then 
	\begin{gather*}
	\beta(y_2y_8^iy_{12}^j)=\beta(y_2(y_8+y_3y_5)^i(y_{12}+y_3y_9)^j)=\beta(y_2\rho(\bar\alpha_4^i\bar\alpha_6^j))=2x_1\bar\alpha_4^i\bar\alpha_6^j.
	\end{gather*}
This implies that for any $f\in\Zz/2[y_8,y_{12}]$, $\beta(y_2f)\neq 0$ by the second part of Proposition \ref{prop:K_4}. Since $\beta\rho=0$, it follows that
$(y_2\cdot\Zz/2[y_8,y_{12}])\cap\im\rho=0$, and then $\im\rho\subset S$.
\end{proof}

\begin{proof}[Proof of Proposition \ref{prop:generated}]
Let $R$ be the subring generated by the elements in the proposition, and let $N=H^*(BPU(4);\Zz)/R$ be the quotient $\Zz$-module. Then lemma \ref{lem:image rho} and the first paragraph of its proof show that $\rho(R)=\rho(H^*(BPU(4);\Zz))$, i.e. $\rho(N)=0$.
We need to show that $N$ itself is zero.
	
Let $\mathbf{t}$ be the ideal consisting of torsion elements of $H^*(BPU(4);\Zz)$, and let $F=H^*(BPU(4);\Zz)/\mathbf{t}$. Since $\pi^*(\bar\alpha_i)=\alpha_i$, using Proposition \ref{prop:K_4} and the fact that torsion free elements in $^UE$ concentrated in $^UE_*^{0,*}$, we see that
\[\pi^*(R)\cong K_4\cong {^U}E_\infty^{0,*}\cong F.\]
Hence we get a commutative diagram of short exact sequences
\[\xymatrix{0\ar[r]&\mathbf{t}\cap R\ar[r]\ar[d]&R\ar[r]\ar[d]&F\ar[r]\ar@{=}[d]&0\\
0\ar[r]&\mathbf{t}\ar[r]&H^*(BPU(4);\Zz)\ar[r]&F\ar[r]&0}
\]
This implies that $N\cong \mathbf{t}/(\mathbf{t}\cap R)$ as $\Zz$-modules.
Recall that the torsion elements of $H^*(BPU(4);\Zz)$ are all $2$-primary. So if $N\neq 0$, then $N\otimes\Zz/2\neq 0$.
However, since $0=\rho(N)\cong N\otimes\Zz/2$, we must have $N=0$. Hence $R=H^*(BPU(4);\Zz)$.
\end{proof}

Let $A=K_4\otimes\Zz[x_3,x_{10},x_{12}]/I$ be the ring defined in Theorem \ref{thm:BPU4}.
\begin{prop}\label{prop:surjective}
	There is a surjective homomorphism of graded rings: 
	\[A\to H^*(BPU(4);\Zz),\ \ x_3\mapsto x_1,\ x_{10}\mapsto y_{2,1},\ x_{15}\mapsto y_{2,(1,0)},\ \alpha_i\mapsto\bar\alpha_i.\]
\end{prop}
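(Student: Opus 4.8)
The plan is to read off surjectivity from Proposition~\ref{prop:generated} and to concentrate all the work on showing that the assignment is a well-defined ring homomorphism. Concretely, I would first define a homomorphism out of the free graded ring $\Zz[\alpha_2,\alpha_3,\alpha_4,\alpha_6,x_3,x_{10},x_{15}]$ to $H^*(BPU(4);\Zz)$ by the stated values on generators, and then verify that every defining relation of $A$ maps to $0$: the single $K_4$-relation $64\alpha_6-\alpha_2^3-27\alpha_3^2+48\alpha_2\alpha_4$ among the $\bar\alpha_i$, and the eleven generators of $I$. Once this is done, surjectivity is automatic, since Proposition~\ref{prop:generated} identifies $\bar\alpha_2,\bar\alpha_3,\bar\alpha_4,\bar\alpha_6,x_1,y_{2,1},y_{2,(1,0)}$ as ring generators of $H^*(BPU(4);\Zz)$, and these are exactly the images of the generators of $A$.

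The order and degree relations are immediate. We have $4x_1=0$ because $H^3(BPU(4);\Zz)\cong\Zz/4\{x_1\}$. The relations $2x_1^2=0$ and $2y_{2,(1,0)}=0$ hold by graded commutativity, as $x_1$ and $y_{2,(1,0)}$ have odd degree, while $2y_{2,1}=0$ because $y_{2,1}=\chi^*(y_{2,1})$ with $y_{2,1}$ a $2$-torsion class of $H^*(K(\Zz,3);\Zz)$ and $\chi^*$ a ring map. Finally $\bar\alpha_2x_1=0$ since $H^7(BPU(4);\Zz)=0$, which follows from $H^7(BPU(4);\Zz/2)=0$ (read off Theorem~\ref{thm:toda}) via the universal coefficient theorem.

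For all the remaining relations I would run one uniform argument. First apply $\pi^*$: since $\pi^*\chi^*=(\chi\pi)^*=0$, every monomial carrying a factor $x_1$ or $y_{2,I}$ dies, so each annihilation relation and the big relation lands in $\ker\pi^*$, the $2$-primary torsion ideal; and the $K_4$-relation is sent by $\pi^*$ to the identity $64\alpha_6=\alpha_2^3+27\alpha_3^2-48\alpha_2\alpha_4$ valid in $\Lambda_4$ by Theorem~\ref{thm:K_4}, hence is likewise torsion. Next I reduce modulo $2$: using $\rho(\bar\alpha_2)=y_2^2$, $\rho(\bar\alpha_3)=y_2^3$, $\rho(\bar\alpha_4)=y_8+y_3y_5$, $\rho(\bar\alpha_6)=y_{12}+y_3y_9$ from Proposition~\ref{prop:Z_2 alpha_i}, together with $\rho(x_1)=y_3$, $\rho(y_{2,1})=y_5^2$, $\rho(y_{2,(1,0)})=y_3^5+y_3^2y_9+y_5^3$ and the Toda relations $y_2y_3=y_2y_5=0$, $y_9^2=y_3^2y_{12}+y_5^2y_8$, one checks $\rho(\text{relation})=0$ in every case; in particular the big relation reduces to $y_3^4(y_9^2+y_3^2y_{12}+y_5^2y_8)=0$.

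The main obstacle is promoting mod-$2$ vanishing to integral vanishing, because $\rho$ is injective only on torsion of exponent $2$, whereas $H^*(BPU(4);\Zz)$ contains classes of order $4$. The remedy is the control from Propositions~\ref{prop:order} and~\ref{prop:K_4}: the order-$4$ classes are the $x_1\cdot\Zz[\bar\alpha_4,\bar\alpha_6]$, which sit in Serre filtration exactly $3$ and in odd total degree. Hence every even-degree relation (the $K_4$-relation in degree $12$, $\bar\alpha_2y_{2,1}$, $\bar\alpha_3y_{2,1}$, and the degree-$30$ big relation) automatically lies in exponent-$2$ torsion, and so does each odd-degree relation in a degree admitting no order-$4$ class, i.e.\ where $8a+12b=\deg-3$ is unsolvable ($\bar\alpha_3x_1$ in degree $9$, $\bar\alpha_3y_{2,(1,0)}$ in degree $21$). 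The one delicate case is $\bar\alpha_2y_{2,(1,0)}\in H^{19}$, a degree in which the order-$4$ class $x_1\bar\alpha_4^2$ does occur; here I would separate the two by Serre filtration in $^UE$, since $\bar\alpha_2y_{2,(1,0)}$ has filtration $15$ while $x_1\bar\alpha_4^2$ and all its multiples have filtration $3$, so $\bar\alpha_2y_{2,(1,0)}$ again lies in an exponent-$2$ subgroup. In each case $\rho$-injectivity then forces the relation to vanish, completing the verification of well-definedness and hence the proposition.
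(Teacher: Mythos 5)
Your overall architecture coincides with the paper's own proof: reduce to well-definedness via Proposition~\ref{prop:generated}, kill each relation by combining $\pi^*$ (to see it is $2$-primary torsion), mod $2$ reduction via Proposition~\ref{prop:Z_2 alpha_i} (your reduction of the big relation to $y_3^4(y_9^2+y_3^2y_{12}+y_5^2y_8)=0$ is exactly the computation the paper performs), and order/filtration control in the delicate degrees, including the same filtration-$3$ versus filtration-$15$ separation in degree $19$. However, two of your justifications are wrong as written. The small one: $2y_{2,(1,0)}=0$ does \emph{not} follow from graded commutativity --- odd degree alone never forces $2$-torsion, and indeed $x_1$ is an odd-degree class of order $4$ in this very ring, so your argument would ``prove'' $2x_1=0$. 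The correct reason is the one you yourself gave for $y_{2,1}$: the class $y_{2,(1,0)}$ lies in $R_2={_2H}^*(K(\Zz,3);\Zz)$, a $\Zz/2$-algebra, hence has exponent $2$ before pulling back along $\chi^*$.

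The more serious gap is your appeal to the ``classification'' that the order-$4$ classes are exactly $x_1\cdot\Zz[\bar\alpha_4,\bar\alpha_6]$. Propositions~\ref{prop:order} and~\ref{prop:K_4} give only one direction --- that these classes \emph{do} have order $4$ --- not that they are the \emph{only} ones, and your exponent-$2$ conclusions in odd degrees rest entirely on the unproved converse. In degree $9$ this is circular: to know $H^9(BPU(4);\Zz)$ has exponent $2$ you must in particular know $2\bar\alpha_3x_1=0$, which is a weak form of the very relation $\bar\alpha_3x_1=0$ you are verifying, and nothing in the quoted propositions (your criterion ``$8a+12b=6$ is unsolvable'' only shows $\bar\alpha_3x_1\notin x_1\Zz[\bar\alpha_4,\bar\alpha_6]$) supplies it. The paper avoids this by working with explicit generating sets: $H^9$ is spanned by $x_1^3$ (exponent $2$) and $\bar\alpha_3x_1$, so $\rho(\bar\alpha_3x_1)=0$ gives $\bar\alpha_3x_1=2a\,\bar\alpha_3x_1$, i.e.\ an odd multiple of $\bar\alpha_3x_1$ vanishes in a $2$-primary group, forcing $\bar\alpha_3x_1=0$. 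Your other uses of the classification are salvageable without it: for even degrees, every even-degree product of the ideal generators $x_1$, $y_{2,1}$, $y_{2,(1,0)}$ contains a factor of exponent $2$ (namely $x_1^2$, $y_{2,1}$, or $y_{2,(1,0)}$), which is how the paper sees that even-degree torsion has exponent $2$; and once the degree-$9$ relation is in hand, the analogous monomial bookkeeping gives $2H^{21}=0$ and $2H^{19}\subseteq\{0,2\bar\alpha_4^2x_1\}$, after which your filtration argument correctly finishes degree $19$ just as the paper does. So the strategy is sound and matches the paper, but the degree-$9$ step and the $2y_{2,(1,0)}=0$ step need to be repaired as above.
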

\begin{proof}
By Proposition \ref{prop:generated}, we only need to show that this map is well-defiend, in other words, the element $\alpha=64\alpha_6-97\alpha_2^3-27\alpha_3^2+144\alpha_2\alpha_4$ and all generators of the ideal $I$ map to zero.

We already know that in $H^*(BPU(4);\Zz)$, $x_1$ has order $4$ by \cite[Theorem 1.1]{Gu21}, and $x_1^2$, $y_{2,1}$, $y_{2,(1,0)}$ are all of order $2$ by Theorem \ref{thm:cohomology of k(3)} and the fact that  $\rho(x_1^2)$, $\rho(y_{2,1})$, $\rho(y_{2,(1,0)})\neq 0$ in $H^*(BPU(4);\Zz/2)$. 
%Hence the statement holds for the generators $x_3$, $x_3^2$, $x_{10}$, $x_{12}$ of $I$. 
From this one can see that the order of a torsion element of even degree in $H^*(BPU(4);\Zz)$ is at most $2$.

Let $\bar\alpha=64\bar\alpha_{6}-97\bar\alpha_2^3-27\bar\alpha_3^2+144\bar\alpha_2\bar\alpha_4$. Then $\pi^*(\bar\alpha)=\alpha=0$ for $\pi:BU(4)\to BPU(4)$, since $\im\pi^*\cong K_4$ by Proposition \ref{prop:K_4}. Hence $\bar\alpha$ is a torsion element. From Proposition \ref{prop:Z_2 alpha_i} we know that $\rho(\bar\alpha)=0$, and so $\bar\alpha\in 2H^*(BPU(4);\Zz)$. Then we get $\bar\alpha=0$ since $\bar\alpha$ is of even degree.

Since $y_2y_5=0$ and $\rho(y_{2,1})=\chi^*(x_{2,0}^2)=y_5^2$, we have $\rho(\bar\alpha_2y_{2,1})=\rho(\bar\alpha_3y_{2,1})=0$. Hence $\bar\alpha_2y_{2,1}$, $\bar\alpha_3y_{2,1}\in 2H^*(BPU(4);\Zz)$, and then $\bar\alpha_2y_{2,1}=\bar\alpha_3y_{2,1}=0$, since $\bar\alpha_2y_{2,1}$, $\bar\alpha_3y_{2,1}$ are of even degree.

It is easy to see that $H^7(BPU(4);\Zz)=0$ since $H^7(BPU(4);\Zz/2)=0$, so $\bar\alpha_2x_1=0$.  Since $\rho(\bar\alpha_3x_1)=y_2^3y_3=0$, $\bar\alpha_3x_1\in 2H^9(BPU(4);\Zz)$. Then the fact that $H^9(BPU(4);\Zz)\cong \Zz/2\{x_1^3\}$ gives $\bar\alpha_3x_1=0$.

Since $\rho(y_{2,(1,0)})=\chi^*(x_1^2x_{2,1}+x_{2,0}^3)=y_3^5+y_3^2y_9+y_5^3$, we have $\rho(\bar\alpha_2y_{2,(1,0)})=0$, then $\bar\alpha_2y_{2,(1,0)}\in2H^{19}(BPU(4);\Zz)$. Checking each  element of degree $19$, one can show that $\bar\alpha_2y_{2,(1,0)}=0$ or $2\bar\alpha_4^2x_1$. But the later is impossible, since if this is the case, it would imply that $\alpha_4^2x_1$ in $^UE_\infty^{3,16}$ has order $2$ because of the Serre filtration construction, which contradicts the second part of Proposition \ref{prop:K_4}.
Hence we get the relation $\bar\alpha_2y_{2,(1,0)}=0$. The proof of $\bar\alpha_3y_{2,(1,0)}=0$ is easier, using the fact that all elements of degree $21$ have order $2$.

Let $y=x_1^{6}\bar\alpha_6+x_1^4y_{2,1}\bar\alpha_4+x_1^{5}y_{2,(1,0)}+y_{2,1}^3+y_{2,(1,0)}^2$. Then using the computations in the first paragraph of the proof Lemma \ref{lem:image rho} and the relation $y_9^2=y_3^2y_{12}+y_5^2y_8$ in $H^*(BPU(4);\Zz/2)$, one readily checks that $\rho(y)=0$. Since $y$ has even degree, $y=0$ by the same reasoning.

The above calculations verify that $\alpha$ and all generators of $I$ map to zero, finishing the proof.
\end{proof} 

%Also, since $4x_1=0$, the second part of Proposition \ref{prop:K_4} shows that  $x_1\xi$ is of order $4$ for any $\xi\in\Zz[\bar\alpha_4,\bar\alpha_6]$.

The following Proposition completes the proof of Theorem \ref{thm:BPU4}.
\begin{prop}
The homomorphism in Proposition \ref{prop:surjective} is injective.
\end{prop}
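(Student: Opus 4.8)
The plan is to promote the surjection $\phi\colon A\to H^*(BPU(4);\Zz)$ of Proposition \ref{prop:surjective} to an isomorphism by matching the two sides degree by degree. First I would pin down the additive structure of $A$. Since every generator of $I$ lies in $(x_3,x_{10},x_{15})$, the subring $K_4$ is a direct summand and $A=K_4\oplus M$ with $M=(x_3,x_{10},x_{15})/I$. The relations $\alpha_2 x_j=\alpha_3 x_j=0$ force $\alpha_2,\alpha_3$ to annihilate $M$, so $M$ is a module over $K_4/(\alpha_2,\alpha_3)\cong\Zz[\alpha_4,\alpha_6]$ generated by the monomials $x_3^ax_{10}^bx_{15}^c$; using the cubic relation to eliminate $x_{15}^{2}$ I may take $c\le 1$. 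Together with $4x_3=2x_3^2=2x_{10}=2x_{15}=0$ this yields the explicit splitting
\[
M\cong\Zz/4[\alpha_4,\alpha_6]\{x_3\}\ \oplus\ \Zz/2[\alpha_4,\alpha_6]\big\{x_3^ax_{10}^bx_{15}^c:\ c\le1,\ (a,b,c)\neq(0,0,0),(1,0,0)\big\}.
\]
That $x_3\bar\alpha_4^i\bar\alpha_6^j$ genuinely has order $4$ follows by applying $\phi$ and invoking the second statement of Proposition \ref{prop:K_4}. From this I read off, in each degree $d$, the number $s_d^M$ of cyclic torsion summands of $A$ and the number $b_d^M=\#\{(i,j):3+8i+12j=d\}$ of order-$4$ summands.

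Next I would analyse the torsion $\mathbf t$ of the target. As in the proof of Proposition \ref{prop:generated}, $\phi$ induces the identity on the free quotient $K_4\cong H^*(BPU(4);\Zz)/\mathbf t$, so free ranks agree automatically, $\mathbf t=\phi(M)$, and $\phi$ restricts to a surjection $M\twoheadrightarrow\mathbf t$. In particular $\mathbf t$ has exponent $4$, so each $\mathbf t^d\cong(\Zz/2)^{a_d}\oplus(\Zz/4)^{b_d}$ with no torsion of order $\ge 8$. The number of order-$4$ summands is then squeezed: the family $\Zz/4[\alpha_4,\alpha_6]\{x_1\}\subset{^U}E^{3,*}_\infty$ of Proposition \ref{prop:K_4} gives $b_d\ge b_d^M$, while $\mathbf t^d$ being a quotient of $M^d$ gives $b_d\le b_d^M$; hence $b_d=b_d^M$. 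Consequently $\phi|_M$ is an isomorphism in degree $d$ if and only if the total summand counts agree, $a_d+b_d=a_d^M+b_d^M$, i.e.\ $s_d=s_d^M$.

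It remains to compute $s_d$ independently and check $s_d=s_d^M$. Since all torsion is $2$-primary, the universal coefficient theorem gives $\dim_{\Zz/2}H^d(BPU(4);\Zz/2)=r_d+s_d+s_{d+1}$, where $r_d$ is the free rank, i.e.\ the coefficient of $t^d$ in the Poincaré series $P_{\mathrm{fr}}(t)=\big((1-t^4)(1-t^6)(1-t^8)\big)^{-1}$ of $K_4\otimes\Qq=\Qq[\alpha_2,\alpha_3,\alpha_4]$, and $\dim_{\Zz/2}H^d(BPU(4);\Zz/2)$ is read from Toda's presentation (Theorem \ref{thm:toda}). With $s_d=0$ for small $d$ this recursion determines all $s_d$; in generating-function form, writing $P_2(t)$ for the Hilbert series of the ring in Theorem \ref{thm:toda}, it reads $\sum_d s_d t^d=t\,(P_2(t)-P_{\mathrm{fr}}(t))/(1+t)$. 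On the other hand, from the first paragraph,
\[
\sum_d s_d^M\,t^d=\frac{1}{(1-t^8)(1-t^{12})}\left(\frac{1+t^{15}}{(1-t^3)(1-t^{10})}-1\right).
\]
Once these two rational functions are seen to coincide, $s_d=s_d^M$ together with $b_d=b_d^M$ forces $|\mathbf t^d|=|M^d|$, so the surjection $M^d\twoheadrightarrow\mathbf t^d$ is an isomorphism and $\ker\phi=0$.

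I expect the main obstacle to be exactly this final bookkeeping: computing the Hilbert series $P_2(t)$ of Toda's ring from its generators and relations, and verifying the generating-function identity $t(P_2-P_{\mathrm{fr}})/(1+t)=\sum_d s_d^M t^d$. The structural steps — the splitting $A=K_4\oplus M$, the exponent-$4$ bound on $\mathbf t$, and the squeeze pinning down $b_d$ — are routine given the earlier results, but this identity is where the real content lies, since it encodes precisely how the single cubic relation trims $A$ down to exactly the size dictated by Toda's mod $2$ Betti numbers and the universal coefficient theorem.
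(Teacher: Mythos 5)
Your approach is correct, and it is genuinely different from the paper's proof. The paper argues ideal-theoretically: for $f$ in the kernel it splits off the free part via $\pi^*$, pushes the remainder into $H^*(BSO(6);\Zz/2)$ via $\phi^*\rho$, shows by a Krull-dimension/UFD argument that the ideal of relations among $g_1,\dots,g_5$ is principal and generated by the image of the single cubic relation, and then lifts back using the order-$4$ statement of Proposition \ref{prop:K_4} to place $f_1$ inside $I$; it never needs the additive structure of $A$. You instead bound $A$ additively from above (note your ``explicit splitting'' of $M$ is a priori only a spanning upper bound, justified retroactively once the counts close, and $K_4/(\alpha_2,\alpha_3)\cong\Zz[\alpha_4,\alpha_6]/(64\alpha_6)$ rather than $\Zz[\alpha_4,\alpha_6]$ --- harmless since $4M=0$), bound $\mathbf t$ below via the same Proposition \ref{prop:K_4} together with the Serre filtration (here one should note $F^3H^d=\mathbf t^d$, since ${^U}E_\infty^{s,*}$ is torsion for $s>0$, and that $2\mathbf t^d$ surjects onto $2(F^3/F^4)$), and close the squeeze numerically. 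The identity you flag as the real content does hold: with $P_2(t)=\frac{1+t^9}{(1-t^3)(1-t^5)(1-t^8)(1-t^{12})}+\frac{t^2}{(1-t^2)(1-t^8)(1-t^{12})}$ from Toda's presentation, clearing the factor $(1-t^8)^{-1}(1-t^{12})^{-1}$ and using $\frac{1-t^{12}}{(1-t^4)(1-t^6)}=\frac{1+t^6}{1-t^4}$ and $\frac{1+t^{15}}{(1-t^3)(1-t^{10})}=\frac{1-t^5+t^{10}}{(1-t^3)(1-t^5)}$, both sides of your identity reduce to $(1+t)\,t^3(1-t^5+t^7)/\bigl((1-t^3)(1-t^5)\bigr)$, so $s_d=s_d^M$ in all degrees and the surjections $M'\twoheadrightarrow M\twoheadrightarrow\mathbf t$ are all isomorphisms. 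Your route costs heavier bookkeeping but yields the full additive structure of $H^*(BPU(4);\Zz)$ as a byproduct, whereas the paper's route localizes all difficulty in one ideal computation.
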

\begin{proof}
The proposition is equivalent to saying that if a polynomial $f$ in the variables $\alpha_i$, $i=2,3,4,6$, $x_3$, $x_{10}$, $x_{15}$, maps to zero in $H^*(BPU(4);\Zz)$, then $f\in(\alpha)+I$, where $\alpha=64\alpha_6-97\alpha_2^3-27\alpha_3^2+144\alpha_2\alpha_4$. 

Write $f=f(\bar\alpha_2,\bar\alpha_3,\bar\alpha_4,\bar\alpha_6,x_1,y_{2,1},y_{2,(1,0)})$.
Since $f$ is zero in $H^*(BPU(4);\Zz)$, $\pi^*(f)=0$, and then $f$ can be written as
$f=f_0+f_1$, where $f_0$ belongs to the ideal of $\Zz[\bar\alpha_2,\bar\alpha_3,\bar\alpha_4,\bar\alpha_6]$ generated by $\bar\alpha=64\bar\alpha_{6}-97\bar\alpha_2^3-27\bar\alpha_3^2+144\bar\alpha_2\bar\alpha_4$, and $f_1\in (x_1,y_{2,1},y_{2,(1,0)})$. Hence it suffices to show that $f_1\in I$. Here we identify $I$ with the corresponding ideal of $\Zz[\bar\alpha_2,\bar\alpha_3,\bar\alpha_4,\bar\alpha_6,x_1,y_{2,1},y_{2,(1,0)}]$. 

Let $\phi:BSO(6)\to BPSO(6)=BPU(4)$ be the map in Section \ref{sec:on the mod 2 cohomology}, and let 
\begin{gather*}
g_1=\phi^*\rho(x_1)=\phi^*(y_3),\ \ g_2=\phi^*\rho(y_{2,1})=\phi^*(y_5^2),\\ g_3=\phi^*\rho(\bar\alpha_4)=\phi^*(y_8+y_3y_5),\ \ g_4=\phi^*\rho(\bar\alpha_6)=\phi^*(y_{12}+y_3y_9),\\ 
g_5=\phi^*\rho(y_{2,(1,0)})=\phi^*(y_3^5+y_3^2y_9+y_5^3).
\end{gather*}
Then $\phi^*\rho(f_1)$ belongs to the ideal $I_1$ of relations between the elements $g_1,\dots,g_5$ in $H^*(BSO(6);\Zz/2)\cong \Zz/2[w_2,\dots,w_6]$.

Let us compute the ideal $I_1\subset \Zz/2[g_1,\dots,g_5]$. By Proposition \ref{prop:image in BSO(6)}, modulo $(w_2)$, 
\begin{gather*}
g_1\equiv w_3,\ \ g_2\equiv w_5^2,\ \ g_3\equiv w_4^2+w_3w_5,\\ 
	g_4\equiv w_6^2+w_3^2w_6+w_3w_4w_5,\ \ g_5\equiv w_3^5+w_3^3w_6+w_3^2w_4w_5+w_5^3.
\end{gather*}
It is easy to see that the four elements $g_1,\dots,g_4$ are algebraically independent. Hence the Krull dimension of the subalgbra of $\Zz/2[w_2,\dots,w_6]$ generated by $g_i$, $i=1,\dots,5$, is at least $4$, which means that $I_1$ is a principal ideal of $\Zz/2[g_1,\dots,g_5]$. To see this, note that $I_1$ is the kernel of the map $\Zz/2[g_1,\dots,g_5]\to \Zz/2[w_2,\dots,w_6]$ with target a UFD, so $I_1$ is a prime ideal of $\Zz/2[g_1,\dots,g_5]$. Since the Krull dimension of $\Zz/2[g_1,\dots,g_5]/I_1$ is at least $4$, the height of $I_1$ is at most $1$, and then $I_1$ is principal since $\Zz/2[g_1,\dots,g_5]$ is a UFD.

 Suppose that $h=h(g_1,\dots,g_5)$ generates $I_1$. 
Since the powers $w_6^i$ only appear in the expressions of $g_4$ and $g_5$, it follows that if $h\neq 0$, then the degrees of $g_4$ and $g_5$ in $h$ are both nonzero. Furthermore, since the degree of $w_6$ in $g_4$ (resp. in $g_5$) is $2$ (resp. $1$), the degree of $g_5$ in $h$ is at least $2$ when $h\neq 0$. We have seen in the proof of Proposition \ref{prop:surjective} that $\phi^*\rho(y)=g_1^6g_4+g_1^4g_2g_3+g_1^5g_5+g_2^3+g_5^2=0$ in $\Zz/2[w_2,\dots,w_6]$, where $y=x_1^{6}\bar\alpha_6+x_1^4y_{2,1}\bar\alpha_4+x_1^{5}y_{2,(1,0)}+y_{2,1}^3+y_{2,(1,0)}^2$, hence we must have $h=g_1^6g_4+g_1^4g_2g_3+g_1^5g_5+g_2^3+g_5^2$.

Since $f_1\in (x_1,y_{2,1},y_{2,(1,0)})$ and $\phi^*\rho(\bar\alpha_2)=\phi^*\rho(\bar\alpha_3)=0$, the above analysis implies that 
\[f_1\in (y)+(\bar\alpha_2,\bar\alpha_3)\cdot(x_1,y_{2,1},y_{2,(1,0)})+2(x_1,y_{2,1},y_{2,(1,0)}).\]
We know that $f$ and $f_0$ are zero in $H^*(BPU(4);\Zz)$, so $f_1=0$ in $H^*(BPU(4);\Zz)$. Since $y$, $2y_{2,1}$, $2y_{2,(1,0)}$ and $(\bar\alpha_2,\bar\alpha_3)\cdot(x_1,y_{2,1},y_{2,(1,0)})$ are all zero in $H^*(BPU(4);\Zz)$ and the order of $x_1\xi$ is $4$ for any $\xi\in\Zz[\bar\alpha_4,\bar\alpha_6]$ by the second part of Proposition \ref{prop:K_4}, it follows that $f_1$ is actually in the ideal \[(y)+(\bar\alpha_2,\bar\alpha_3)\cdot(x_1,y_{2,1},y_{2,(1,0)})+4(x_1),\]
which is contained in $I$. The proof is completed.
\end{proof}

\section{Kernel of $Sq^1$ on $H^*(BPU(4);\Zz/2)$}
In this section we prove the  claim in the proof of Lemma \ref{lem:image rho} that the kernel of the action $Sq^1$ on $H^*(BPU(4);\Zz/2)$ is the subalgebra generated by $y_2$, $y_3$, $y_5^2$, $y_8+y_3y_5$, $y_{12}+y_3y_9$, $y_3^2y_9+y_5^3$. 
%The strategy is to compute the Bockstein cohomology of $H^*(BPU(4);\Zz/2)$, since $Sq^1$ is the mod $2$ Bockstein homomorphism. 
To make the computation easier, we shall establish a $\Zz/2$-algebra isomorphic to $H^*(BPU(4);\Zz/2)$, and define a operation on it corresponding to $Sq^1$. 

 Let $W$ be the quotient of the polynomial algebra $\Zz/2[x_2,x_3,x_5,x_8,x_9,x_{12}]$ by the ideal generated by $x_2x_3$, $x_2x_5$, $x_2x_9$ and $x_9^2+x_3^2x_{12}+x_5^2x_8+x_3^3x_9+x_3x_5^3$. Then it is easy to see that there is an algebra isomporphism $W\cong H^*(BPU(4);\Zz/2)$ given by $x_i\mapsto y_i$ for $i=2,3,5,9$, $x_8\mapsto y_8+y_3y_5$, $x_{12}\mapsto y_{12}+y_3y_9$. Define an operation $\DD$ on $W$ by $\DD(x_i)=0$ for $i=2,3,8,12$, $\DD(x_5)=x_3^2$, $\DD(x_{9})=x_5^2$, and the  Leibniz rule $\DD(ab)=\DD(a)b+a\DD(b)$. Then using Theorem \ref{thm:steenrod}, one easily verifies that there is a commutative diagram 
\[\xymatrix{
	W\ar[r]^{\DD}\ar[d]_\cong&W\ar[d]_\cong\\
	H^*(BPU(4);\Zz/2)\ar[r]^{Sq^1}&H^*(BPU(4);\Zz/2)}\]
Thus, $W$ can be viewed as a DGA with differential $\DD$. We compute its cohomology as follows
\begin{prop}\label{prop:bockstein}
	$H^*(W,\DD)\cong\Zz/2[x_2,x_8,x_{12}]\otimes E_{\Zz/2}[x_3]$.
\end{prop}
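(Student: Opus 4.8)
The plan is to peel off the differential ideal generated by $x_2$ and then compute the homology of the quotient via the free differential graded algebra on the remaining generators. Because $x_2x_3=x_2x_5=x_2x_9=0$ in $W$, the principal ideal $(x_2)$ equals $x_2\cdot\Zz/2[x_2,x_8,x_{12}]$, on which $\DD$ vanishes identically (since $\DD(x_2)=\DD(x_8)=\DD(x_{12})=0$); as $\DD$ is a derivation killing $x_2$, this is a sub-DGA. Moreover the $\Zz/2$-span $C$ of the normal-form monomials not involving $x_2$ is a complementary subcomplex, which one checks directly from $\DD(x_5)=x_3^2$ and $\DD(x_9)=x_5^2$. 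Hence $W=(x_2)\oplus C$ as complexes with $C\cong\bar W:=W/(x_2)$, so that $H^*(W,\DD)\cong(x_2)\oplus H^*(\bar W,\DD)$, where $\bar W=\Zz/2[x_3,x_5,x_8,x_9,x_{12}]/(r)$ and $r=x_9^2+x_3^2x_{12}+x_5^2x_8+x_3^3x_9+x_3x_5^3$. This reduces everything to the computation of $H^*(\bar W,\DD)$.

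For $\bar W$ I would work from the free DGA $P=\Zz/2[x_3,x_5,x_8,x_9,x_{12}]$. A short calculation gives $\DD(r)=0$, so $(r)$ is a differential ideal and, since $P$ is a domain, $p\mapsto rp$ identifies $(r)$ with a degree shift of $P$ as a complex. The short exact sequence $0\to(r)\to P\to\bar W\to0$ then gives a long exact sequence in which the map $H^*((r))\to H^*(P)$ is multiplication by the class $[r]$. To exploit this I first compute $H^*(P)$: by the Künneth theorem $H^*(P,\DD)\cong\Zz/2[x_8,x_{12}]\otimes H^*(Q,\DD)$ with $Q=\Zz/2[x_3,x_5,x_9]$, and I would evaluate $H^*(Q)$ by filtering by $x_9$-degree. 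On the first page the differential reduces to the elementary fact that $H^*(\Zz/2[x_3,x_5],\,\DD x_5=x_3^2)\cong\Zz/2[x_5^2]\otimes E_{\Zz/2}[x_3]$; the induced differential with $d_1x_9=x_5^2$ then kills $x_5^2$ and leaves $\Zz/2[x_9^2]\otimes E_{\Zz/2}[x_3]$, and since $x_3$ and $x_9^2$ are honest $\DD$-cycles the spectral sequence collapses. Thus $H^*(P)\cong\Zz/2[x_8,x_{12},x_9^2]\otimes E_{\Zz/2}[x_3]$.

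Next I would identify $[r]$ inside $H^*(P)$. The terms $x_3^2x_{12}=\DD(x_5x_{12})$ and $x_5^2x_8=\DD(x_9x_8)$ are boundaries, and the key identity $\DD(x_3x_5x_9)=x_3^3x_9+x_3x_5^3$ shows that the final two terms of $r$ also bound; therefore $[r]=[x_9^2]$, a polynomial generator of $H^*(P)$. Consequently multiplication by $[r]$ is injective, every connecting homomorphism vanishes, the long exact sequence breaks into short exact sequences, and $H^*(\bar W)\cong H^*(P)/(x_9^2)\cong\Zz/2[x_8,x_{12}]\otimes E_{\Zz/2}[x_3]$. Reassembling with the summand $(x_2)=x_2\,\Zz/2[x_2,x_8,x_{12}]$ and reading off the multiplicative generators $x_2,x_8,x_{12}$ (polynomial) together with $x_3$ (exterior, with $x_3^2=[\DD x_5]=0$) yields the algebra of the statement.

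I expect the main obstacle to be the computation of $H^*(Q)$ together with the identification $[r]=[x_9^2]$. The filtration spectral sequence must be shown to degenerate (which I would justify by noting its $E_2$-page is generated by the permanent cycles $x_3$ and $x_9^2$), and one must verify carefully that it is \emph{precisely} the lower terms of $r$ that bound, via the witness $x_3x_5x_9$, so that $[r]$ is the nonzerodivisor $x_9^2$ rather than $0$. It is exactly this nonzerodivisor property that makes the quotient by $[r]$ clean and thereby forces the polynomial-times-exterior shape of the answer; everything else is bookkeeping with the two elementary derivation computations above.
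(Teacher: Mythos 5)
Your proof is correct, but it takes a genuinely different route from the paper's. The paper proceeds by explicit deformation retraction: it defines the projection $\lambda$ of $W$ onto the span of the monomials $x_2^ax_8^bx_{12}^c$ and $x_3x_8^bx_{12}^c$, writes down a homotopy $P$ with $P\DD+\DD P=\lambda+\mathrm{id}$, and verifies this case by case on monomials $\nn\cdot x_3^ix_5^jx_9^k$, $\nn\in\Zz/2[x_2,x_8,x_{12}]$. You instead split off the square-zero ideal $(x_2)=x_2\Zz/2[x_2,x_8,x_{12}]$ (legitimate: intersecting the defining ideal of $W$ with $\Zz/2[x_3,x_5,x_8,x_9,x_{12}]$, e.g.\ by setting $x_2=0$, gives exactly $(r)$, so the complement is the DGA $\bar W=P/(r)$), compute $H^*(P)\cong\Zz/2[x_8,x_{12},x_9^2]\otimes E_{\Zz/2}[x_3]$ via K\"unneth and the increasing, $\DD$-stable filtration by $x_9$-degree (which does collapse at $E_2$, since $x_3$ and $x_9^2$ are honest cocycles), and then use $r=x_9^2+\DD(x_5x_{12}+x_8x_9+x_3x_5x_9)$: as $[r]=[x_9^2]$ is a nonzerodivisor in $H^*(P)$, the long exact sequence of $0\to(r)\to P\to\bar W\to0$ breaks into short exact sequences and $H^*(\bar W)\cong H^*(P)/(x_9^2)$. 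All the individual claims check out, including $\DD(r)=0$. What your route buys is that every step is a standard structural argument, the only inspired input being the single witness $x_5x_{12}+x_8x_9+x_3x_5x_9$ --- which, not coincidentally, is exactly the value the paper's homotopy assigns to $x_9^2$ in its third clause. What the paper's route buys is brevity once the homotopy is guessed, at the cost of an unilluminating case verification and of having to check that $P$ is well defined on $W$, where the monomials $\nn\cdot x_3^ix_5^jx_9^k$ are not linearly independent because of the relation $r$.

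One caveat, which applies equally to the paper's own phrasing of the statement: since $x_2x_3=0$ in $W$, the classes $x_2^ax_3$ with $a\geq1$ vanish in homology, so what both arguments actually produce is $\Zz/2[x_2,x_8,x_{12}]\oplus x_3\,\Zz/2[x_8,x_{12}]$, i.e.\ the stated tensor product with the additional relation $x_2x_3=0$; compare degree $5$, where $H^5(W,\DD)=0$ because $\DD(x_5)=x_3^2$, whereas $\Zz/2[x_2,x_8,x_{12}]\otimes E_{\Zz/2}[x_3]$ contains $x_2x_3$. So when you ``read off the multiplicative generators'' at the end, record the relation $x_2x_3=0$ as well. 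Nothing downstream is affected: the ensuing corollary and Lemma \ref{lem:image rho} only use the resulting description of $\ker\DD$ inside $W$, where this relation holds automatically.
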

Our desired result on $\ker Sq^1$ is a direct consequence of the following corollary of Proposition \ref{prop:bockstein}. 
\begin{cor}
$\ker\DD$ is the subalgebra of $W$ generated by $x_2$, $x_8$, $x_{12}$, $x_3$, $x_5^2$, $x_3^2x_9+x_5^3$.
\end{cor}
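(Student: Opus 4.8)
The plan is to derive the corollary from Proposition~\ref{prop:bockstein} by pinning down $\im\DD$ explicitly. Write $B\subseteq W$ for the subalgebra generated by $x_2,x_8,x_{12},x_3,x_5^2$ and $w:=x_3^2x_9+x_5^3$. First I would record the easy inclusion $B\subseteq\ker\DD$: the elements $x_2,x_3,x_8,x_{12}$ are cycles by definition, $\DD(x_5^2)=2x_5x_3^2=0$, and $\DD(w)=x_3^2\DD(x_9)+3x_5^2\DD(x_5)=x_3^2x_5^2+x_5^2x_3^2=0$ over $\Zz/2$; since $\DD$ is a derivation this propagates to all of $B$.

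For the reverse inclusion I would combine two observations. The first, immediate from Proposition~\ref{prop:bockstein}, is that $B$ contains representatives of all cohomology classes: the algebra generators $x_2,x_8,x_{12},x_3$ of $H^*(W,\DD)\cong\Zz/2[x_2,x_8,x_{12}]\otimes E_{\Zz/2}[x_3]$ all lie in $B$, so the composite $B\hookrightarrow\ker\DD\twoheadrightarrow H^*(W,\DD)$ is surjective. The second is that $\im\DD\subseteq B$. Granting both, any $z\in\ker\DD$ agrees in cohomology with some $b\in B$, so $z-b\in\im\DD\subseteq B$ and hence $z\in B$; this yields $\ker\DD=B$, which is the corollary.

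The hard part will be the inclusion $\im\DD\subseteq B$, the only genuine computation. By linearity it suffices to check $\DD(m)\in B$ for $m$ running over a spanning set of monomials. Using $x_2x_3=x_2x_5=x_2x_9=0$ to separate the $x_2$-monomials, and the quadratic relation to reduce the $x_9$-degree to at most $1$, every such $m$ is either $x_2^ax_8^dx_{12}^f$ with $a\ge1$, on which $\DD$ vanishes, or $x_3^bx_5^cx_8^dx_9^ex_{12}^f$ with $e\in\{0,1\}$. On the latter I would expand $\DD$ by Leibniz using $\DD(x_5)=x_3^2$ and $\DD(x_9)=x_5^2$. When $e=0$, or when $e=1$ and $c$ is even, the outcome is a $\Zz/2[x_3,x_8,x_{12}]$-multiple of an even power of $x_5$, hence in $B$. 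The single delicate case is $e=1$ with $c=2j+1$ odd, where a stray $x_9$ and an odd power of $x_5$ both survive; the point is that they recombine exactly into $w$, namely $\DD(x_3^bx_5^cx_8^dx_9x_{12}^f)=x_3^b(x_5^2)^jx_8^dx_{12}^f\,(x_3^2x_9+x_5^3)=x_3^b(x_5^2)^jx_8^dx_{12}^f\,w\in B$. This is exactly why $w$ is built into the list of generators, and with it the inclusion $\im\DD\subseteq B$, and therefore the corollary, follows.
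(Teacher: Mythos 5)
Your proposal is correct and takes essentially the same route as the paper: both deduce the corollary from Proposition \ref{prop:bockstein} by observing that the classes of $x_2,x_8,x_{12},x_3$ represent all of $H^*(W,\DD)$ and then computing $\im\DD$ on monomials via the Leibniz rule, with the odd-$x_5$-power, $x_9$-linear case recombining exactly into $x_3^2x_9+x_5^3$. The only cosmetic difference is organizational: you normalize the $x_9$-exponent to at most $1$ before applying $\DD$, whereas the paper lists spanning elements with even powers $x_9^{2t}$ and substitutes $x_9^{2t}=[x_3^2x_{12}+x_5^2x_8+x_3(x_3^2x_9+x_5^3)]^t$ afterwards.
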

\begin{proof}
Since $H^*(W,\DD)=\ker\DD/\im\DD$, it follows from Proposition \ref{prop:bockstein} that as $\Zz/2$-modules
\[\ker\DD\cong \im\DD\oplus\Zz/2[x_2,x_8,x_{12}]\oplus\Zz/2[x_2,x_8,x_{12}]\cdot x_3.\]
 Now we compute $\im\DD$.
For a monomial $\mm=\nn\cdot x_3^ix_5^jx_9^k\in W$, $\nn\in\Zz/2[x_2,x_8,x_{12}]$, by the rule of $\DD$ we have
\[\DD(\mm)=\nn\cdot(\w j\cdot x_3^{i+2}x_5^{j-1}x_9^{k}+\w k\cdot x_3^{i}x_5^{j+2}x_9^{k-1}).\]
Here $\w *$ means the mod $2$ number. Hence $\im\DD$ is a $\Zz/2$-vector space spanned by elements of the forms:
\[\nn\cdot x_3^rx_5^{2s}x_9^{2t},\ \ \nn\cdot x_3^ix_5^{2q}x_9^{2t},\ \  \nn\cdot x_3^ix_5^{2s}x_9^{2t}(x_3^2x_9+x_5^3),\]
where $i,s,t\geq 0$,  $q\geq 1$, $r\geq2$. Note that $x_9^{2t}=[x_3^2x_{12}+x_5^2x_8+x_3(x_3^2x_9+x_5^3)]^t$ by definition. Then an easy calculation gives the result.
\end{proof}

\begin{proof}[Proof of Proposition \ref{prop:bockstein}]
	Define a $\Zz/2$-linear map $\lambda:W\to W$ by $\lambda(\mm)=\mm$ if $\mm=x_2^ax_8^bx_{12}^c$ or $x_2^ax_8^bx_{12}^cx_3$, $a,b,c\geq 0$, and $\lambda(\mm)=0$ for all other monomials. It is easy to show that $\lambda$ is a chain map associated to the differential $\DD$. We claim that $\lambda$ is chain homotopic to the identity map. Since $\DD$ restricted to $\Zz/2[x_2,x_8,x_{12},x_3]$ is zero and any element in  $\Zz/2[x_2,x_8,x_{12}]\cdot\Zz/2\{1,x_3\}$ is clearly not in $\im\DD$, the proposition follows immediately from this claim.
	
	Now we construct the chain homotopy $P$ as follows. For a monomial $\mm=\nn\cdot x_3^ix_5^jx_9^k\in W$, $\nn\in\Zz/2[x_2,x_8,x_{12}]$, let
	\[P(\mm)=\begin{cases}
	\nn\cdot x_3^{i-2}x_5^{j+1}x_9^k&\text{if }i\geq 2,\\
	\nn\cdot x_3^ix_5^{j-2}x_9^{k+1}&\text{if }i\leq1,\ j,k \text{ even},\ j\neq 0,\\
	\nn\cdot x_3^ix_9^{k-2}(x_5x_{12}+x_8x_9+x_3x_5x_9)&\text{if }i\leq1,\ j=0,\ k\geq 2\ \text{even},\\
	0&\text{otherwise}.
	\end{cases}\]
	It is straightforward to show that $P\DD+\DD P=\lambda-\mathrm{id}$ on all monomials in $W$ case by case.
\end{proof}
	
\bibliography{M-A}
\bibliographystyle{amsplain}
\end{document}